\documentclass[12pt]{article}
\usepackage{latexsym,amsfonts,amsmath,graphics,amssymb}
\usepackage{verbatim}
\usepackage{epsfig}

\setlength{\bigskipamount}{5ex plus1.5ex minus 2ex}
\setlength{\textheight}{24cm}
\setlength{\textwidth}{16cm}
\setlength{\hoffset}{-1.3cm}
\setlength{\voffset}{-1.8cm}

\newtheorem{theorem}{Theorem}
\newtheorem{lemma}{Lemma}
\newtheorem{corollary}{Corollary}
\newtheorem{proposition}{Proposition}

\newtheorem{definition}{Definition}

\newtheorem{remark}{Remark}
\newenvironment{proof}{\begin{trivlist}
    \item[\hskip\labelsep{\it Proof.}]}{$\hfill\Box$\end{trivlist}}

\newcommand{\satop}[2]{\stackrel{\scriptstyle{#1}}{\scriptstyle{#2}}}

\newcommand{\bsgamma}{\boldsymbol{\gamma}}

\newcommand{\bsc}{\boldsymbol{c}}
\newcommand{\bsk}{\boldsymbol{k}}

\newcommand{\bsw}{\boldsymbol{w}}
\newcommand{\bsx}{\boldsymbol{x}}

\newcommand{\bsg}{\boldsymbol{g}}
\newcommand{\bst}{\boldsymbol{t}}

\newcommand{\bsnu}{\boldsymbol{\nu}}

\newcommand{\bsy}{\boldsymbol{y}}
\newcommand{\bssigma}{\boldsymbol{\sigma}}

\newcommand{\AC}{{\mathcal{A}}}
\newcommand{\wal}{{\rm wal}}

\newcommand{\bszero}{\boldsymbol{0}}
\newcommand{\rd}{\,\mathrm{d}}
\newcommand{\NN}{\mathbb{N}}
\newcommand{\N}{\mathbb{N}}
\newcommand{\ZZ}{\mathbb{Z}}
\newcommand{\integer}{\ZZ}
\newcommand{\FF}{\mathbb{F}}

\newcommand{\R}{\mathbb{R}}

\newcommand{\Hg}{\mathcal{H}_{\bsgamma}}

\renewcommand{\pmod}[1]{\,(\bmod\,#1)}

\newcommand{\e}{\varepsilon}
\newcommand{\X}{{\mathfrak X}}
\newcommand{\U}{{\mathcal U}}
\newcommand{\cS}{\mathcal{S}}
\newcommand{\bsq}{\boldsymbol{q}}

\DeclareMathOperator{\tail}{tail}
\DeclareMathOperator{\decay}{decay}

\DeclareMathOperator{\mo}{mod}

\DeclareMathOperator{\nes}{nest}
\DeclareMathOperator{\unr}{unr}
\DeclareMathOperator{\cost}{cost}
\DeclareMathOperator{\CD}{CD}
\DeclareMathOperator{\ML}{ML}

\begin{document}

\title{\scshape Infinite-Dimensional Integration in Weighted
Hilbert Spaces: Anchored Decompositions, Optimal Deterministic Algorithms,
and Higher Order Convergence}

\author{$\quad$Josef Dick\\$\quad$\\
{\small School of Mathematics and Statistics,
University of New South Wales}\\
{\small Sydney, NSW, 2052, Australia} \\
{\small email: josef.dick@unsw.edu.au}\\
$\quad$\\
Michael Gnewuch\\$\quad$\\
{\small School of Mathematics and Statistics,
University of New South Wales}\\
{\small Sydney, NSW, 2052, Australia} \\
{\small email: m.gnewuch@unsw.edu.au}}

\date{}
\maketitle

\begin{abstract}
We study numerical integration of functions depending on an infinite number
of variables. We provide lower error bounds for general deterministic linear algorithms
and provide matching upper error bounds with the help of suitable multilevel algorithms
and changing dimension algorithms.

More precisely, the spaces of integrands we consider are weighted reproducing kernel
Hilbert spaces with norms induced by an underlying anchored function space decomposition.
Here the weights model the relative importance of different groups of variables.
The error criterion used is the deterministic worst case error. We study two cost models for function evaluation which depend on
the number of active variables of the chosen sample points, and two classes of weights,
namely product and order-dependent (POD) weights and the newly introduced weights with
finite active dimension. We show for these classes of weights that multilevel algorithms achieve the optimal rate of convergence  in the first cost model while changing dimension
algorithms achieve the optimal convergence rate in the second model.

As an illustrative example, we discuss the anchored Sobolev space with smoothness
parameter $\alpha$ and provide new optimal quasi-Monte Carlo multilevel algorithms
and quasi-Monte Carlo changing dimension algorithms based on higher-order
polynomial lattice rules.
\end{abstract}

{\bf Key words:}  path integration, multilevel algorithms, changing dimension algorithms,  quasi-Monte Carlo methods, polynomial lattice rules, reproducing kernel Hilbert spaces, higher order quasi-Monte Carlo, higher order polynomial lattice rules;

\section{Introduction}

The evaluation of
integrals over functions with an unbounded or even infinite number of variables is
an important task
in physics, quantum chemistry or in quantitative finance, see, e.g.,
\cite{Gil08a, WW96} and the references therein. In recent years a large number of
researchers contributed to the design of new algorithms as, e.g., multilevel and changing dimension algorithms or dimension-wise quadrature methods, to approximate such integrals efficiently.
Multilevel algorithms were introduced by Heinrich and Sindambiwe \cite{Hei98, HS99} in the context of integral equations and parametric integration, and by Giles
\cite{Gil08a, Gil08b} in the context of stochastic differential equations.
Changing dimension algorithms were introduced by Kuo et al. \cite{KSWW10} in the context of infinite-dimensional integration in weighted Hilbert spaces and dimension-wise quadrature methods were introduced by Griebel and Holtz \cite{GH10} for multivariate integration. (Changing dimension algorithms and dimension-wise quadrature methods are based on a similar idea.)

In this paper we want to study infinite-dimensional numerical integration on a weighted reproducing kernel Hilbert space of functions with infinitely many variables as it has
been done in
\cite{HW01, KSWW10, NH09, HMNR10, NHMR11, Gne10, PW11, Gne12, B10, BG12, Gne12a}. The Hilbert spaces we consider here posses so-called
anchored function space decompositions.
For a motivation of this specific function space setting and connections to problems in the theory of stochastic processes and mathematical finance we refer to \cite{HMNR10, NH09, NHMR11}.

We provide error bounds for the worst case error of deterministic linear algorithms; these bounds are expressed in terms
of the cost of the algorithms.
We solely take account of function evaluations, i.e., the
cost of function sampling, and neglect other cost as, e.g., combinatorial cost.
To evaluate the cost of sampling, we
consider two cost models: the \emph{nested subspace sampling model} (introduced in \cite{CDMR09}, where it was called \emph{variable subspace sampling model}) and the \emph{unrestricted subspace sampling model}
(introduced in \cite{KSWW10}).

In the nested subspace sampling model lower error bounds for infinite-dimensional
integration were provided in \cite{NHMR11} for general $n$-point quadrature formulas in the case where
the weighted Hilbert space of integrands is defined via an anchored kernel and the weights are product weights. We generalize these error bounds to general weights.
In the unrestricted subspace sampling model lower error bounds where provided
for product weights and anchored kernels in \cite{KSWW10}, and for general weights and
the Wiener kernel in \cite{Gne10}. We generalize these results to anchored kernels and  general weights. (Let us mention that in the randomized setting similar general lower
error bounds for
infinite-dimensional integration on weighted Hilbert spaces are provided
for anchored decompositions in \cite{Gne12} and for underlying ANOVA-type decompositions
in \cite{BG12}; to treat the latter decompositions, a technically more involved analysis
is necessary.)

In this paper we further study two classes of weights in more depths: The class
of \emph{product and order-dependent (POD) weights}, which includes, in particular,
product weights and finite-product weights, and the class of \emph{weights of finite active dimension}, which includes, in particular, finite-diameter weights and (the more general)
finite-intersection weights. We derive several new results for both classes of weights
which might also be of interest for other tractability studies of continuous numerical
problems on weighted spaces, apart from the infinite-dimensional integration problem.

For these two classes of weights we provide upper error bounds with the help of multilevel algorithms and
changing dimension algorithms. These bounds show that for the cost functions most relevant
in applications, namely those cost functions which grow at least linearly in the number
of active variables, the convergence rate of our algorithms is arbitrarily close to the convergence rate of the $N$th minimal integration error and our lower bounds are thus sharp. For the remaining cost functions, which grow sub-linearly in the number of active variables, our bounds are still sharp in most of the cases (depending on the smoothness of the kernel and the decay rate of the weights).

These new upper bounds improve on the results obtained for product weights in \cite{NHMR11} and \cite{Gne10}. Furthermore, in contrast to \cite[Thm.~3]{NHMR11}, we are
able to formulate our results on upper bounds without introducing additional auxiliary weights that are not problem inherent.

We provide explicit quasi-Monte Carlo multilevel and changing dimension algorithms based
on higher order polynomial lattice rules for weighted Hilbert spaces of integrands that
correspond to anchored Sobolev spaces with smoothness parameter $\alpha >1$.
These algorithms are optimal in the sense that they achieve convergence rates arbitrarily
close to the optimal convergence rate (i.e., the convergence rate of the $N$th minimal
integration error).

The article is organized as follows: In Section \ref{TGS} the setting we want to study is
introduced. In Section \ref{LB} we provide lower error bounds for deterministic quadrature formulas for solving the infinite-dimensional integration problem on weighted Hilbert spaces.
In Section \ref{LB_GEN} we present the most general form of the lower bounds which is valid
for arbitrary weights. In Section \ref{LB_SPEC} we state the form of the lower
bounds for the two specific classes of weights we consider.
In Section \ref{MLA} and \ref{CDA} we explain multilevel and changing dimension
algorithms. In Section \ref{UB_POD} we provide upper error bounds for POD weights,
and in Section \ref{UB_FAD} for weights with finite active dimension.
In Section \ref{HOC} we illustrate the upper and lower bounds in the situation where
the space of integrands is based on the univariate anchored Sobolev space with smoothness parameter $\alpha >1$. Here we consider specific quasi-Monte Carlo multilevel and changing dimension
algorithms that achieve higher-order convergence.

\section{The general setting}
\label{TGS}

\subsection{Notation}

For $n\in\N$ we denote the set $\{1,\ldots,n\}$ by $[n]$.
If $u$ is a finite set, then its size is denoted by $|u|$.
We put
\begin{equation*}
 \U := \{ u\subset \N \,|\, |u| < \infty \}.
\end{equation*}
We use the common Landau $O$-notation. For two non-negative functions $f$ and $g$
we write occasionally $f=\Omega(g)$ for $g=O(f)$, and $f=\Theta(g)$ if $f=\Omega(g)$
and $f=O(g)$ holds.

\subsection{The function spaces}

As spaces of integrands of infinitely many variables, we consider
\emph{reproducing kernel Hilbert spaces} which are discussed in
more detail in
\cite{HMNR10, GMR12}.
Our standard reference for
general reproducing kernel Hilbert spaces is \cite{Aro50}.

We start with univariate functions. Let $D\subseteq \R$ be a Borel
measurable set of
$\R$ and let $K:D\times D\to \R$ be a measurable reproducing kernel
with \emph{anchor}
$c\in D$, i.e., $K(c,c) = 0$. This implies
$K(\cdot,c) \equiv 0$. We assume that $K$ is non-trivial,
i.e., $K\neq 0$.
We denote the reproducing kernel Hilbert space with kernel $K$ by
$H = H(K)$ and its scalar product and norm by $\langle \cdot, \cdot \rangle_H$ and
$\|\cdot\|_H$, respectively. We use corresponding notation for other
reproducing kernel Hilbert spaces. If $g$ is a constant function in $H(K)$,
then the reproducing property implies
$g = g(c) = \langle g, K(\cdot,c) \rangle_H = 0$.

Let $\rho$ be a probability measure on $D$. We assume that
\begin{equation}
\label{cond-M}
 M:= \int_D K(x,x) \,\rho({\rm d}x) <\infty.
\end{equation}
For arbitrary $\bsx,\bsy \in D^\N$ and $u\in \U$ we define
\begin{equation*}
K_u(\bsx,\bsy) := \prod_{j\in u} K(x_j, y_j),
\end{equation*}
where by convention $K_{\emptyset} \equiv 1$.
The Hilbert space with reproducing kernel $K_u$ will be denoted
by $H_u = H(K_u)$.
Its functions depend only on the coordinates $j\in u$. If it is convenient for us,
we identify $H_u$ with the space of functions defined on $D^u$ determined by
the kernel $\prod_{j\in u} K(x_j, y_j)$, and write $f_u(\bsx_u)$ instead of $f_u(\bsx)$
for $f_u\in H_u$ and $\bsx\in D^{\N}$, where $\bsx_u := (x_j)_{j\in u}\in D^u$.  For all $f_u\in H_u$ and $\bsx\in D^{\N}$ we have
\begin{equation}
\label{vanish}
f_u(\bsx) = 0
\hspace{3ex}\text{if $x_j=c$ for some $j\in u$.}
\end{equation}
This property yields an \emph{anchored decomposition} of functions,
see, e.g., \cite{KSWW10a}.

Let now $\bsgamma = (\gamma_u)_{u\in \U}$ be weights, i.e.,
a family of non-negative numbers. We assume that  $\bsgamma$ satisfies
\begin{equation}
 \label{summable}
\sum_{u\in\U} \gamma_u M^{|u|} <\infty.
\end{equation}
(One may also consider slightly weaker conditions as done, e.g., in \cite[Sect.~5]{KSWW10}
or \cite{PW11}; for a comparison of these different conditions see \cite{GMR12}.)
We denote the \emph{set of active
coordinate sets}, $\{u\in \U \,|\, \gamma_u>0\}$ by $\AC = \AC(\bsgamma)$. (Sets $u \subseteq \mathbb{N}$ with $|u| = \infty$ are always assumed to be inactive.) We always assume that $\AC$ is non-trivial, i.e., that there
exists a $\emptyset \neq u \in \U$ with $u\in \AC$.

Let us define the domain $\X$ of functions of infinitely many variables by
\begin{equation*}
\X:= \left\{\bsx \in D^{\N} \,|\, \sum_{u\in \AC} \gamma_u \prod_{j\in u}
K(x_j,x_j) <\infty \right\}.
\end{equation*}
Let $\mu$ be the infinite-product probability measure of $\rho$ on $D^{\N}$.
Due to our assumptions we have $\mu(\X) = 1$, see \cite[Lemma~1]{HMNR10}
or \cite{GMR12}.
For $\bsx,\bsy \in\X$ we define
\begin{equation*}
\mathcal{K}_{\bsgamma}(\bsx,\bsy) := \sum_{u\in\AC} \gamma_u K_u(\bsx,\bsy).
\end{equation*}
$\mathcal{K}_{\bsgamma}$ is well-defined and, since $\mathcal{K}_{\bsgamma}$ is symmetric and positive
semi-definite, it is a reproducing
kernel on $\X \times \X$, see \cite{Aro50}. We denote the corresponding
reproducing kernel Hilbert space by $\mathcal{H}_{\bsgamma} = H(\mathcal{K}_{\bsgamma})$ and its norm by $\|\cdot\|_{\bsgamma}$.
For the next lemma see \cite[Cor.~5]{HW01} or \cite{GMR12}.

\begin{lemma}
\label{Lemma6}
The space $\Hg$ consists of all functions
$f=\sum_{u\in\AC} f_u$, $f_u\in H_u$, such that
\begin{equation*}
\sum_{u\in\AC} \gamma^{-1}_u \|f_u\|^2_{H_u} <\infty.
\end{equation*}
In the case of convergence, we have
\begin{equation*}
\|f\|^2_{\bsgamma} = \sum_{u\in\AC} \gamma^{-1}_u \|f_u\|^2_{H_u}.
\end{equation*}
\end{lemma}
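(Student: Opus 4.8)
The plan is to identify $\Hg$ as the reproducing kernel Hilbert space associated with the kernel $\mathcal{K}_{\bsgamma}=\sum_{u\in\AC}\gamma_u K_u$ by exhibiting a concrete Hilbert space $\mathcal{G}$ of functions with the claimed norm and checking that it has $\mathcal{K}_{\bsgamma}$ as reproducing kernel; uniqueness of the RKHS for a given kernel (Aronszajn \cite{Aro50}) then finishes the proof. Concretely, I would let $\mathcal{G}$ be the vector space of all formal sums $f=\sum_{u\in\AC}f_u$ with $f_u\in H_u$ for which $\sum_{u\in\AC}\gamma_u^{-1}\|f_u\|_{H_u}^2<\infty$ (here only $u$ with $\gamma_u>0$ occur, so the quotient is well defined), equipped with $\|f\|_{\mathcal{G}}^2=\sum_{u\in\AC}\gamma_u^{-1}\|f_u\|_{H_u}^2$. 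The first step is to check that the decomposition $f=\sum_u f_u$ is \emph{unique}, i.e.\ that the spaces $H_u$ are linearly independent inside the space of functions on $\X$, so that $\|\cdot\|_{\mathcal{G}}$ is well defined on $\mathcal{G}$ rather than only on the external direct sum. This is where property \Ref{vanish} is used: evaluating a relation $\sum_u f_u\equiv 0$ at points $\bsx$ that agree with the anchor $c$ off a chosen finite set $v$ and differ from $c$ on $v$ kills every $f_u$ with $u\not\subseteq v$, and an induction on $|v|$ (peeling off coordinates one at a time, again via \Ref{vanish}) shows each $f_u\equiv 0$; this also underlies the claim that $\mathcal{G}$ is complete, since Cauchy sequences in $\mathcal{G}$ are componentwise Cauchy in each $H_u$ and the componentwise limits reassemble to an element of $\mathcal{G}$.

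Next I would verify the reproducing property. For fixed $\bsy\in\X$, the candidate kernel section is $\mathcal{K}_{\bsgamma}(\cdot,\bsy)=\sum_{u\in\AC}\gamma_u K_u(\cdot,\bsy)$; its $u$-component is $\gamma_u K_u(\cdot,\bsy)\in H_u$, with $\|\gamma_u K_u(\cdot,\bsy)\|_{H_u}^2=\gamma_u^2\|K_u(\cdot,\bsy)\|_{H_u}^2=\gamma_u^2 K_u(\bsy,\bsy)=\gamma_u^2\prod_{j\in u}K(y_j,y_j)$, so $\sum_u\gamma_u^{-1}\|\gamma_u K_u(\cdot,\bsy)\|_{H_u}^2=\sum_u\gamma_u\prod_{j\in u}K(y_j,y_j)=\mathcal{K}_{\bsgamma}(\bsy,\bsy)<\infty$ because $\bsy\in\X$; hence $\mathcal{K}_{\bsgamma}(\cdot,\bsy)\in\mathcal{G}$. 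Then for $f=\sum_u f_u\in\mathcal{G}$,
\begin{equation*}
\langle f,\mathcal{K}_{\bsgamma}(\cdot,\bsy)\rangle_{\mathcal{G}}
=\sum_{u\in\AC}\gamma_u^{-1}\langle f_u,\gamma_u K_u(\cdot,\bsy)\rangle_{H_u}
=\sum_{u\in\AC}\langle f_u,K_u(\cdot,\bsy)\rangle_{H_u}
=\sum_{u\in\AC}f_u(\bsy)=f(\bsy),
\end{equation*}
using the reproducing property of each $K_u$ in $H_u$; absolute convergence of the series, needed to justify the termwise manipulation, follows from Cauchy--Schwarz: $\sum_u|\langle f_u,K_u(\cdot,\bsy)\rangle_{H_u}|\le(\sum_u\gamma_u^{-1}\|f_u\|_{H_u}^2)^{1/2}(\sum_u\gamma_u\|K_u(\cdot,\bsy)\|_{H_u}^2)^{1/2}=\|f\|_{\mathcal{G}}\,\mathcal{K}_{\bsgamma}(\bsy,\bsy)^{1/2}<\infty$. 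Thus $\mathcal{G}$ is a Hilbert space of functions on $\X$ with reproducing kernel $\mathcal{K}_{\bsgamma}$, and by uniqueness $\mathcal{G}=\Hg$ with equality of norms.

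The main obstacle I anticipate is the bookkeeping around the uniqueness of the anchored decomposition $f=\sum_u f_u$ and the attendant convergence issues: one must be careful that the sum defining $f$ converges pointwise on all of $\X$ (which it does, again by the Cauchy--Schwarz estimate above, since $\mathcal{K}_{\bsgamma}(\bsx,\bsx)<\infty$ for $\bsx\in\X$), that $\mathcal{G}$ is genuinely complete and not merely a pre-Hilbert space, and that the identification of components is unambiguous so that $\|\cdot\|_{\mathcal{G}}$ is well defined. All of these rest on the vanishing property \Ref{vanish} together with the summability assumption \Ref{summable} (which guarantees $\mu(\X)=1$ and that constants like $M^{|u|}$ control the relevant series), and none requires more than elementary Hilbert space arguments once the inductive argument for linear independence of the $H_u$ is set up. Alternatively, one may cite \cite[Cor.~5]{HW01} or \cite{GMR12} for the abstract statement that the RKHS of a sum of kernels is the corresponding sum of RKHSs with the infimal-convolution norm, and then observe that here the decomposition is unique so the infimum is attained by the unique representation; I would present the direct argument since it is short and self-contained.
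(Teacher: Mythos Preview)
Your proof is correct and self-contained, but note that the paper does not actually prove this lemma: it simply cites \cite[Cor.~5]{HW01} and \cite{GMR12} for the result (see the sentence immediately preceding the lemma). So there is no ``paper's own proof'' to compare against beyond the citation.

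Your direct argument---build the candidate space $\mathcal{G}$ as an $\ell^2$-weighted orthogonal sum, verify that the anchored property \Ref{vanish} forces uniqueness of the decomposition (so the norm is well defined on functions, not just on formal sums), check completeness componentwise, and then verify the reproducing property by Cauchy--Schwarz---is exactly the standard route and is essentially how the cited references proceed. The only point worth flagging is that the uniqueness-of-decomposition step deserves care in the infinite-sum setting: you sketch an induction on $|v|$ after restricting to points of the form $(\bsx_v;\bsc)$, which is fine, but one should state explicitly that the pointwise-convergent series $\sum_u f_u(\bsx_v;\bsc)$ reduces to a \emph{finite} sum over $u\subseteq v$ by \Ref{vanish}, so the induction is on a genuinely finite sum at each stage. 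With that made explicit, your argument is complete and arguably more informative to the reader than a bare citation.
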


For $u\in \AC$ let $P_u$ denote the orthogonal projection
$P_u:\Hg \to H_u$, $f\mapsto f_u$ onto $H_u$. Then each $f\in \Hg$
has a unique representation
\begin{equation*}
f = \sum_{u\in \AC} f_u
\hspace{2ex}\text{with $f_u = P_u(f) \in H_u$, $u\in \AC$.}
\end{equation*}

\subsection{Infinite-dimensional integration}
\label{IDI}

Due to (\ref{summable}), we have $\Hg \subseteq L_1(\X,{\rm d}\mu)$, and
the integration functional
\begin{equation*}
I(f) := \int_{\X} f(\bsx)\,\mu({\rm d}\bsx)
\end{equation*}
is continuous on
$\Hg$, i.e., the operator norm of $I$ is finite:
\begin{equation}
\label{bedingung}
\|I\|^2_{{\mathcal{H}_{\bsgamma}}} = \sum_{u\in \AC} \gamma_u C^{|u|}_0 <\infty,
\hspace{2ex}\text{where}\hspace{2ex}
C_0:= \int_D\int_D
K(x,y)\,\rho({\rm d}x)\,\rho({\rm d}y) < \infty,
\end{equation}
see, e.g., \cite{GMR12}.
We assume that $I$ is non-trivial, i.e., that $C_0>0$.
Notice that $C_0 \le M$.

For a given set of weights $\bsgamma$ we denote by $\widehat{\bsgamma}$
the set of weights defined by
\begin{equation}
\label{gammahut}
\widehat{\gamma}_u := \gamma_u C_0^{|u|}
\hspace{2ex}\text{for all $u\in \U$.}
\end{equation}
The representer $h\in \Hg$ of $I$, i.e., the function $h$ satisfying
$I(f) = \langle f, h \rangle_{\bsgamma}$ for all $f\in\Hg$, is given by
\begin{equation*}
h(\bsx) = \int_{\X} \mathcal{K}_{\bsgamma}(\bsx,\bsy) \mu({\rm d}\bsy)
\end{equation*}
and consequently the operator norm of the functional $I$ satisfies
$\|I\|_{\Hg} = \|h\|_{\bsgamma}$.
For $u\in\AC$ we define $I_u := I\circ P_u$ on $\Hg$,
i.e., $I_u(f) = \langle f, P_u(h) \rangle_{\bsgamma}$  for all $f\in\Hg$ .
More concretely, we have
\begin{equation*}
I_u(f) = \int_{D^u} f_u(\bsx_u)  \,\rho^u({\rm d}\bsx_u),
\end{equation*}
and the representer $h_u$ of $I_u$ in $\Hg$ is given by
$h_u(\bsx_u) = P_u(h)(\bsx_u)$.
Thus we have
\begin{equation*}
I(f) = \sum_{u\in\AC} I_u(f_u)
\hspace{2ex}\text{for all $f\in\Hg$.}
\end{equation*}

\subsection{Admissible algorithms, errors, and cost models}

We define the \emph{set of admissible sample points} $S$ by
\begin{equation}
\label{admissable_set}
S:= \{ (\bsx_u;\bsc) \,|\, u\in \U\}.
\end{equation}
Here again $\bsx_u = (x_j)_{j\in u}\in D^u$, and
$(\bsx_u;\bsc)$ denotes the vector $\bsy = (y_1,y_2,\ldots) \in D^{\N}$
with $y_j = x_j$ if $j\in u$ and $y_j = c$ otherwise.
Note that $(\bsx_u;\bsc) \in \X$.
We consider algorithms of the form
\begin{equation}
\label{def-alg}
Q(f) = \sum_{i=1}^n a_i f(\bst_{v_i};\bsc),
\hspace{2ex}\text{for $v_1, \ldots, v_n\in \U$,}
\end{equation}
with points $\bst_{v_i}\in (D\setminus \{c\})^{v_i}$ and coefficients $a_i\in\R$.
The worst case error is given by
\begin{equation*}
e(Q; \Hg) := \sup_{\|f\|_{\bsgamma} \le 1}
|I(f)-Q(f)|.
\end{equation*}
For an algorithm $Q$ of the form (\ref{def-alg}) we put
$(Q)_{u} := Q\circ P_u$, i.e.,
\begin{equation*}
(Q)_{u}(f) = \sum^n_{i=1} a_i f_u(\bst_{v_i\cap u}; \bsc).
\end{equation*}
We have the identity
\begin{equation}
\label{worid}
[e(Q;\Hg)]^2 =
\sum_{u\in\AC} \gamma_u [e((Q)_{u}; H_u)]^2,
\end{equation}
where
\begin{equation*}
e((Q)_{u}; H_u) = \sup_{\|g\|_{H_u} \le 1}
|I_u(g)- (Q)_{u}(g)|.
\end{equation*}

For the cost of an algorithm we only take into account the cost
for function evaluations.
To make this more precise, let us fix a
\emph{cost function} $\$:\N\to [1,\infty)$, which is non-decreasing.
In this paper we consider two models for the cost of function
evaluations,
the nested subspace
sampling and the unrestricted subspace sampling model.


In the \emph{nested subspace sampling model} we first define for a fixed
strictly increasing sequence $\bsw = (w_i)_{i\in\N}$ of coordinate sets
$w_1\subset w_2 \subset \cdots \in \U$ the cost of a function evaluation
in $\bsx\in\X$ to be
\begin{equation}
 \label{sechs}
\mathfrak{c}_{\bsw,c}(\bsx) := \inf\{\$(|w_i|) \,|\, x_j=c \hspace{1ex}\forall j\notin w_i\}.
\end{equation}
Here we use the standard convention that $\inf\emptyset = \infty$.
For a linear algorithm $Q$ of the form (\ref{def-alg}) we define
\begin{equation*}
\mathfrak{c}_{\bsw,c}(Q) := \sum^n_{i=1} \mathfrak{c}_{\bsw,c}(\bst_{v_i};\bsc).
\end{equation*}
Let $C_{\nes}$ denote the set of all cost functions $c_{\bsw, c}$ of the form \eqref{sechs} where $\bsw$ runs through all strictly increasing sequences $\bsw$ of coordinate sets. Then we define the cost of $Q$ in the nested subspace sampling model to be
\begin{equation*}
 \cost_{\nes}(Q) := \inf_{\mathfrak{c}_{\bsw,c}\in C_{\nes}} \mathfrak{c}_{\bsw,c}(Q).
\end{equation*}
This model was introduced in
\cite{CDMR09}.\footnote{In \cite{CDMR09} it was actually called ``variable subspace sampling
model''. We have chosen a different name to emphasize the difference between
this model and the ``unrestricted subspace sampling model''
explained below.}

In the \emph{unrestricted subspace sampling model} a function
evaluation $f(\bsx)$ costs
\begin{equation*}
\mathfrak{c}_{c}(\bsx) := \inf\{\$(|u|) \,|\, u\in \U\,, \hspace{1ex}
x_j=c \hspace{1ex}\forall j\notin u\}.
\end{equation*}
The cost of a linear algorithm $Q$ of the form (\ref{def-alg}) in the unrestricted
subspace sampling model is given by
\begin{equation*}
\cost_{\unr}(Q) := \sum^n_{i=1} \mathfrak{c}_{c}(\bst_{v_i};\bsc) = \sum^n_{i=1} \$(|v_i|).
\end{equation*}
The \emph{unrestricted subspace sampling model} was introduced in
\cite{KSWW10}.\footnote{In \cite{KSWW10} the cost model did not get a specific name.}

We denote the cost of an algorithm $Q$ in the nested and
unrestricted subspace sampling model by
$\cost_{\nes}(Q)$ and $\cost_{\unr}(Q)$,
respectively.
Obviously, the unrestricted subspace sampling model is more generous
than the nested subspace sampling model.
Note that in both sampling models the cost for function evaluations in
non-admissible sample points is infinite.

\subsection{Strong tractability}
\label{tractabilitysection}

Let $\mo \in \{ \nes, \unr\}$.
The {\em $\e$-complexity} is defined as the minimal
cost among all algorithms of the form (\ref{def-alg}), whose worst case
errors are at most $\e$, i.e.,
\begin{equation}
\label{comp}
  {\rm comp}_{\mo}(\e;\Hg)
 \,:=\, \inf\left\{{\rm cost_{\mo}}(Q) \,|\,
Q \hspace{1ex}\text{is of the form (\ref{def-alg}) and}
\hspace{1ex} e(Q;\Hg)\le\e\right\}.
\end{equation}
The integration problem $I$ is said to be {\em strongly
tractable}\footnote{We chose this notion, since it seems to us to be consistent with the usual notion of tractability in the multivariate setting. A more precise notion would be
``strongly polynomially tractable'', to distinguish this kind of tractability from more
general notions of tractability as introduced in \cite{GW07}, see also \cite{NW08}. But for
convenience we stay with the shorter notion ``strongly tractable''.}
if there are non-negative constants $C$ and $p$ such that
\begin{equation}
\label{pol-tr}
    {\rm comp}_{\mo}(\e;\Hg)\le C \,\e^{-p} \qquad
   \mbox{for all $\e>0$}.
\end{equation}
The {\em exponent of strong tractability} is given by
\begin{equation*}
p^{\mo}= p^{\mo}(\bsgamma) := \inf\{ p\,|\, \text{$p$ satisfies \eqref{pol-tr}}\}.
\end{equation*}
Essentially, $1/p^{\mo}$ is the
\emph{convergence rate} of the
\emph{$N$th minimal worst case error}
\begin{equation}
\label{worstcaseerr}
e^{\mo}(N;\Hg) := \inf\{ e(Q;\Hg )\,|\,
Q \hspace{1ex}\text{is of the form (\ref{def-alg}) and}
\hspace{1ex} \cost_{\mo}(Q)\le N\}.
\end{equation}
In particular, we have for all $p>p^{\mo}$ that
$e^{\mo}(N;\Hg) = O(N^{-1/p})$.

\subsection{Weights}
\label{WEIGHTS}

Here we introduce further definitions and notation which is necessary
for our analysis of lower and upper bounds for the exponents of
strong tractability in the different models.

Let $\bsgamma=(\gamma_u)_{u\in \U}$
be a given family of weights.
Weights $\bsgamma$ are called \emph{finite-order weights
of order $\omega$}
if  there exists an
$\omega\in \N$ such that
$\gamma_{u}=0$ for all $u\in \U$ with $|u|>\omega$.
Finite-order weights were introduced in \cite{DSWW06} for spaces
of functions with a finite number of variables.
The following definition is taken from \cite{Gne10}.

\begin{definition}
\label{Cut-Off}
For weights $\bsgamma$ and $\sigma \in\N$ let us define the \emph{cut-off weights}
of order $\sigma$
\begin{equation}
\label{gammasigma}
\bsgamma^{(\sigma)}
= (\gamma_u^{(\sigma)})_{u\in \U}
\hspace{2ex}\text{via}\hspace{2ex}
\gamma^{(\sigma)}_{u} =
\begin{cases}
\,\gamma_u
\hspace{2ex}&\text{if $|u| \le \sigma$},\\
\,0
\hspace{2ex} &\text{otherwise.}
\end{cases}
\end{equation}
\end{definition}

Clearly, cut-off weights of order $\sigma$ are in particular
finite-order weights of order $\sigma$.

We always assume that the weights $\bsgamma$ we consider satisfy
(\ref{summable}).

Let us denote by $u_1(\sigma), u_2(\sigma),\ldots $,
the distinct non-empty sets $u\in \U$ with
$\gamma_u^{(\sigma)} >0$
for which
$\widehat{\gamma}_{u_1(\sigma)}^{(\sigma)} \ge
\widehat{\gamma}_{u_2(\sigma)}^{(\sigma)} \ge \cdots$.
Let us put $u_0(\sigma) := \emptyset$. We can make the
same definitions for
$\sigma = \infty$; then we have obviously
$\bsgamma^{(\infty)} = \bsgamma$.
For convenience we will usually suppress any reference to $\sigma$
in the case where $\sigma = \infty$.
For $\sigma\in\N\cup\{\infty\}$ let us define
\begin{equation*}
\tail_{\bsgamma,\sigma} (d):= \sum_{j=d+1}^\infty
\widehat{\gamma}_{u_j(\sigma)}^{(\sigma)} \in [0,\infty]
\hspace{2ex}\text{and}\hspace{2ex}
\decay_{\bsgamma,\sigma} :=
\sup \left\{ p\in \R \,\Big|\, \lim_{j\to\infty}
\widehat{\gamma}_{u_j(\sigma)}^{(\sigma)}j^p =0
\right\}.
\end{equation*}

The following definition is from \cite{Gne10}.

\begin{definition}
For $\sigma\in\N\cup\{\infty\}$ let $t^*_\sigma \in [0,\infty]$
be defined as
\begin{equation*}
t^*_\sigma := \inf \big\{t\ge 0\,|\, \,\exists\, C_t>0 \,\,\forall
\,v \in \U: |\{i\in \N \,|\,
u_i(\sigma) \subseteq v\}| \le C_t|v|^t \big\}.
\end{equation*}
\end{definition}

Let $\sigma\in\N$. Since $|u_i(\sigma)|\le \sigma$ for all $i\in\N$, we have
obviously $t^*_\sigma \le \sigma$.
On the other hand, if we have an infinite sequence
$(u_j(\sigma))_{j\in\N}$, it is not hard to verify that
$t^*_\sigma \ge 1$, see \cite{Gne10}.

In the following two subsections we describe the classes of weights we
want to consider in this article.

\subsubsection{Product and order-dependent weights}

\emph{Product and order-dependent (POD) weights} $\bsgamma$ were introduced in
\cite{KSS11} and are a hybrid of so-called \emph{product weights} and
\emph{order-dependent weights}.
Their general form is
\begin{equation}
\label{pod}
\gamma_u = \Gamma_{|u|} \prod_{j\in u}\gamma_j,
\hspace{3ex}\text{where $\gamma_1\ge\gamma_2\ge \cdots \ge 0$, and
$\Gamma_0=\Gamma_1=1$, $\Gamma_2,\Gamma_3,\ldots \ge 0$.}
\end{equation}
Special cases are product and finite-product weights that are defined as
follows.

\begin{definition}
Let $(\gamma_j)_{j\in\N}$ be a sequence of non-negative real
numbers satisfying $\gamma_1\ge \gamma_2 \ge \ldots.$ With the
help of this sequence we define for $\omega\in\N\cup\{\infty\}$  weights
$\bsgamma = (\gamma_{u})_{u\subset_f\N}$ by
\begin{equation}
\label{gammafpw}
\gamma_{u} =
\begin{cases}
\prod_{j\in u} \gamma_j
\hspace{2ex}&\text{if $|u| \le \omega$},\\
\,0
\hspace{2ex} &\text{otherwise,}
\end{cases}
\end{equation}
where we use the convention that the empty product is $1$.
In the case where $\omega=\infty$, we call such weights \emph{product weights},
in the case where $\omega$ is finite, we call them \emph{finite-product weights of order}
(at most) $\omega$.
\end{definition}

Product weights were introduced by  Sloan and Wo\'zniakowski in \cite{SW98}
and have been studied extensively since then.
Finite-product weights were considered in \cite{Gne10} and are
obviously finite-order weights of order at most $\omega$.

It is easily seen that product weights and finite product weights of order $\omega$ are POD weights;
in (\ref{pod}) one just has to choose $\Gamma_\nu =1$ for all $\nu\in\N$ to obtain
product weights and $\Gamma_{|u|} = 1$ for $|u| \le \omega$
and $\Gamma_{|u|} = 0$ for $|u| > \omega$ to obtain finite product weights.
Other concrete examples of POD weights can be found in \cite{KSS11, KSS12}.

\subsubsection{Algorithmic dimension}\label{sec_alg_dim}

The following definition introduces the concept of the \emph{algorithmic dimension} of
a family of weights.

\begin{definition}
Let $\mathcal{W}\subseteq \U$. Let $d \in \mathbb{N} \cup \{\infty\}$ be such that there exists a function
\begin{equation}
\label{phi}
\phi: \mathbb{N} \to [d]
\hspace{2ex}\text{with the property}\hspace{2ex}
\forall u \in \mathcal{W}\,\, \forall j
\neq j' \in u:\, \phi(j) \neq \phi(j'),
\end{equation}
where $[\infty] = \mathbb{N}$. That is, $\phi|_u$ is injective for each $u\in\mathcal{W}$. If $d \in \mathbb{N}$, then we say that $\mathcal{W}$ has
\emph{finite algorithmic dimension}. In this case we call the minimal $d^\ast = d^\ast(\mathcal{W})$ for which such a $\phi$ exists the \emph{algorithmic dimension} of $\mathcal{W}$.

Let $\bsgamma = (\gamma_u)_{u\in \U}$ be a family of weights.
If its set $\mathcal{A}$ of active coordinate sets has algorithmic
dimension $d^\ast(\mathcal{A})$, we say that the family of weights $\bsgamma$ has
\emph{algorithmic dimension $d^\ast(\bsgamma) := d^\ast(\mathcal{A})$}.
If we do not want to specify the
algorithmic dimension $d^\ast$, we just say that $\bsgamma$ has
\emph{finite algorithmic dimension}.
\end{definition}

Weights $\bsgamma$ of finite algorithmic dimension $d^*$ are obviously
finite-order weights of order $\omega \le d^*$, but finite-order weights do
not necessarily have finite algorithmic dimension.

We define a graph associated with $\mathcal{W}$ in the following way. For a given set $\mathcal{W} \subseteq \U$ we consider the infinite simple graph $G_{\mathcal{W}} =(\N,E_{\mathcal{W}})$, where $(i,j)$ with $i \neq j$, belongs to the set of edges $E_{\mathcal{W}}$ if and only if there exists
a $u\in \mathcal{W}$ with $i,j \in u$. The graph $G_{\mathcal{W}}$ does not contain loops, i.e. edges $(i,i)$. We call $G_{\mathcal{W}}$ the \emph{associated graph}
of $\mathcal{W}$. Notice that two different subsets $\mathcal{W}$, $\mathcal{W}'$ of
$\U$ may have the same associated graph.

The following lemma connects the concept of minimal algorithmic dimension to the \emph{chromatic number} $\chi(G_{\mathcal{W}})$ of $G_{\mathcal{W}}$. Recall that the chromatic number of a graph $G$ is the minimal number of colors needed to color the vertices of $G$ in such a way that any two vertices connected by an edge have a different color.

\begin{lemma}\label{lem_graph}
Let $\mathcal{W} \subseteq \U$ and $G_{\mathcal{W}}$ be the associated graph. Then the algorithmic dimension $d^\ast(\mathcal{W})$ of $\mathcal{W}$ and the chromatic number $\chi(G_{\mathcal{W}})$ coincide, i.e.
\begin{equation*}
d^\ast(\mathcal{W}) = \chi(G_{\mathcal{W}}).
\end{equation*}
\end{lemma}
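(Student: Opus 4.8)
The plan is to prove the two inequalities $d^\ast(\mathcal{W}) \le \chi(G_{\mathcal{W}})$ and $d^\ast(\mathcal{W}) \ge \chi(G_{\mathcal{W}})$ separately, by observing that a valid function $\phi$ as in \eqref{phi} and a proper vertex coloring of $G_{\mathcal{W}}$ are essentially the same object. Recall that $\phi:\mathbb{N}\to[d]$ is admissible precisely when $\phi|_u$ is injective for every $u\in\mathcal{W}$, i.e.\ $\phi(i)\neq\phi(j)$ whenever $i\neq j$ lie in a common $u\in\mathcal{W}$. By the definition of $E_{\mathcal{W}}$, the latter condition is exactly: $\phi(i)\neq\phi(j)$ for every edge $(i,j)\in E_{\mathcal{W}}$. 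Hence $\phi$ is an admissible map into $[d]$ if and only if $\phi$ is a proper coloring of $G_{\mathcal{W}}$ with (at most) $d$ colors.

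First I would prove $d^\ast(\mathcal{W}) \le \chi(G_{\mathcal{W}})$. Suppose $\chi(G_{\mathcal{W}}) = k < \infty$ (if $\chi(G_{\mathcal{W}}) = \infty$ there is nothing to prove). Fix a proper coloring $\phi:\mathbb{N}\to[k]$ of $G_{\mathcal{W}}$. By the equivalence just noted, $\phi|_u$ is injective for each $u\in\mathcal{W}$, so $\phi$ witnesses that $\mathcal{W}$ has finite algorithmic dimension with $d = k$; therefore $d^\ast(\mathcal{W}) \le k = \chi(G_{\mathcal{W}})$. Conversely, for $d^\ast(\mathcal{W}) \ge \chi(G_{\mathcal{W}})$, assume $d^\ast(\mathcal{W}) = d < \infty$ and let $\phi:\mathbb{N}\to[d]$ be an admissible map realizing it. Again by the equivalence, $\phi$ is a proper $d$-coloring of $G_{\mathcal{W}}$, so $\chi(G_{\mathcal{W}}) \le d = d^\ast(\mathcal{W})$. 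If instead $d^\ast(\mathcal{W}) = \infty$, the inequality $d^\ast(\mathcal{W}) \ge \chi(G_{\mathcal{W}})$ holds trivially. Combining the two directions yields $d^\ast(\mathcal{W}) = \chi(G_{\mathcal{W}})$.

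The only subtlety worth spelling out is the equivalence "$\phi|_u$ injective for all $u\in\mathcal{W}$" $\iff$ "$\phi$ proper coloring of $G_{\mathcal{W}}$". For the forward direction, if $(i,j)\in E_{\mathcal{W}}$ then by definition there is some $u\in\mathcal{W}$ with $i,j\in u$ and $i\neq j$, so injectivity of $\phi|_u$ gives $\phi(i)\neq\phi(j)$. For the backward direction, if $u\in\mathcal{W}$ and $i\neq j$ both lie in $u$, then $(i,j)\in E_{\mathcal{W}}$, so properness gives $\phi(i)\neq\phi(j)$; since $i,j$ were arbitrary distinct elements of $u$, $\phi|_u$ is injective. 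Note also that $G_{\mathcal{W}}$ has no loops (as required for "proper coloring" to make sense), since an edge requires two \emph{distinct} indices in a common $u$. I do not expect a genuine obstacle here; the content of the lemma is precisely this reformulation, and the only thing to be careful about is handling the infinite cases $d=\infty$ or $\chi=\infty$ symmetrically, and making sure the ranges match (using the convention $[\infty]=\mathbb{N}$) so that "finite algorithmic dimension" corresponds exactly to "finite chromatic number".
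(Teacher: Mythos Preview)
Your proof is correct and follows essentially the same approach as the paper: both arguments rest on the observation that an admissible map $\phi:\mathbb{N}\to[d]$ in the sense of \eqref{phi} is exactly a proper $d$-coloring of $G_{\mathcal{W}}$, and then use minimality on each side. Your write-up is somewhat more explicit than the paper's, spelling out the equivalence in both directions and handling the infinite cases, but the content is the same.
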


\begin{proof}
Assume that we have given a coloring of the vertices of the graph $G_{\mathcal{W}}$. Let the vertices of $G_{\mathcal{W}}$ be denoted by $\mathbb{N}$ and the colors be denoted by $1,2,\ldots, \chi(G_{\mathcal{W}})$. Then we can define the function $\phi:\mathbb{N} \to [\chi(G_{\mathcal{W}})]$ by setting $\phi(i) = c_i$, where $c_i \in [\chi(G_{\mathcal{W}})]$ denotes the color of the vertex $i$. On the other hand, if we have a function $\phi:\mathbb{N} \to [d^\ast(\mathcal{W})]$ given, then we can obtain a coloring of the graph $G_{\mathcal{W}}$ by coloring the vertex $i$ by $\phi(i)$. By the definition of the function $\phi$ and the graph $G_{\mathcal{W}}$ this yields a coloring of the graph $G_{\mathcal{W}}$. Since both $d^\ast(\mathcal{W})$ and $\chi(G_{\mathcal{W}})$ are minimal, the result follows.
\end{proof}

With the help of Lemma \ref{lem_graph} we derive in the following remark a lower bound on the algorithmic dimension.

\begin{remark}
A complete graph $G$ with $n$ vertices has chromatic number $n$, since all vertices are connected to each other by an edge and hence all vertices must have a different color.
If $\mathcal{W}$ has algorithmic dimension $d\in\N$, then $|u| \le d$ for all coordinate sets $u$ in $\mathcal{W}$, since $G_{\mathcal{W}}$ contains a subgraph which is a complete graph with $|u|$ vertices. Hence
\begin{equation}\label{lowboualgdim}
d^*(\mathcal{W}) \ge \sup_{u\in\mathcal{W}}|u|.
\end{equation}
Thus weights with algorithmic dimension
$d\in\N$ are necessarily finite-order weights of order $\omega \le d$.

The lower bound (\ref{lowboualgdim}) is not necessarily sharp, as
shown by the following example: Let $|u| \le 2$ for all $u \in \mathcal{W}$ and let there exist a sequence of sets $\{i_1,i_2\}, \{i_2,i_3\}, \ldots, \{i_{k-1}, i_k\}$,  $\{i_k,i_1\} \in \mathcal{W}$ where $k$ is odd. In other words, $G_{\mathcal{W}}$ contains an odd cycle. Then this graph has chromatic number $3$ as can easily
be shown. An even more drastic example is the set $\mathcal{W}:= \{ u\in\U \,|\, |u|=2\}$,
which has not even finite algorithmic dimension.
\end{remark}

Let us now turn to upper bounds on the algorithmic dimension.

\begin{remark}
As a consequence of Lemma \ref{lem_graph}, we obtain that if $G_{\mathcal{W}}$ is a planar graph (meaning that every finite subgraph is planar), then the famous Four Color Theorem
\cite{AH77a, AH77b}
says that $G_{\mathcal{W}}$ can be colored with at most four colors. Hence in
this situation the minimal algorithmic dimension of $\mathcal{W}$ is at most four.
\end{remark}

We provide further upper bounds on the algorithmic dimension in Theorem \ref{bound_dstar} and \ref{Brooks}.

\begin{theorem}\label{bound_dstar}
Let $\mathcal{W} \subseteq \U$. Then the minimal algorithmic dimension of $\mathcal{W}$ is bounded by
\begin{equation*}
d^\ast(\mathcal{W}) \le \sup_{i \in \mathbb{N}} \left|\bigcup_{u \in \mathcal{W}: i \in u} u \right|.
\end{equation*}
\end{theorem}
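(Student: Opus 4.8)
The plan is to combine Lemma~\ref{lem_graph}, which identifies $d^\ast(\mathcal{W})$ with the chromatic number $\chi(G_{\mathcal{W}})$, with a greedy colouring argument adapted to the countably infinite graph $G_{\mathcal{W}}$. The first step is a bookkeeping observation: for a fixed $i\in\N$, the set $\bigcup_{u\in\mathcal{W}:\,i\in u} u$ is exactly the closed neighbourhood of $i$ in $G_{\mathcal{W}}$. Indeed, it contains $i$ itself (every $u$ occurring in the union contains $i$), and a vertex $j\neq i$ lies in it precisely when $i,j\in u$ for some $u\in\mathcal{W}$, i.e.\ precisely when $(i,j)\in E_{\mathcal{W}}$. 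Hence
\[
\Big|\bigcup_{u\in\mathcal{W}:\,i\in u} u\Big| = 1 + \deg_{G_{\mathcal{W}}}(i),
\]
and, writing $D := \sup_{i\in\N}\big|\bigcup_{u\in\mathcal{W}:\,i\in u} u\big|$, we obtain $\deg_{G_{\mathcal{W}}}(i)\le D-1$ for every $i\in\N$.

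If $D=\infty$ there is nothing to prove, so assume $D\in\N$. I would then colour the vertices $1,2,3,\dots$ of $G_{\mathcal{W}}$ one at a time, in the natural order of $\N$: when processing vertex $i$, assign it a colour $\phi(i)\in[D]$ different from all colours $\phi(j)$ already assigned to neighbours $j<i$ of $i$. Such a colour exists because $i$ has at most $\deg_{G_{\mathcal{W}}}(i)\le D-1$ neighbours, so at most $D-1$ of the $D$ colours in $[D]$ are forbidden. This process defines a function $\phi:\N\to[D]$, and by construction no edge of $G_{\mathcal{W}}$ is monochromatic: if $(i,j)\in E_{\mathcal{W}}$ with $i<j$, then $i$ was coloured before $j$, and $j$ avoided the colour of its neighbour $i$, so $\phi(i)\neq\phi(j)$. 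Equivalently, for every $u\in\mathcal{W}$ the restriction $\phi|_u$ is injective, since any two distinct elements of $u$ are joined by an edge of $G_{\mathcal{W}}$. Thus $\phi$ is an admissible map of the form \eqref{phi}, witnessing $d^\ast(\mathcal{W})\le D$, which is the claim. (Alternatively, one may quote Lemma~\ref{lem_graph} together with the standard fact that a graph of maximum degree $\le D-1$ has chromatic number $\le D$.)

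The only point requiring a little care is that $G_{\mathcal{W}}$ is infinite, so one cannot directly invoke the finite greedy-colouring bound $\chi\le\Delta+1$; but because the vertex set $\N$ is well ordered and, when $D<\infty$, each vertex has only finitely many neighbours, processing the vertices in increasing order makes the greedy construction go through verbatim, and no compactness argument (such as the De Bruijn--Erd\H{o}s theorem) is needed. I do not expect any genuine obstacle beyond this observation and the closed-neighbourhood bookkeeping that accounts for the ``$+1$'' in $|\bigcup_{u\ni i}u| = 1+\deg_{G_{\mathcal{W}}}(i)$.
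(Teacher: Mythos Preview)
Your proof is correct and rests on the same core observation as the paper's: the union $\bigcup_{u\in\mathcal{W}:\,i\in u} u$ is the closed neighbourhood of $i$ in $G_{\mathcal{W}}$, so $\deg_{G_{\mathcal{W}}}(i)\le D-1$, and the standard bound $\chi\le\Delta+1$ gives the result. The one genuine difference is how the infinite graph is handled. The paper invokes the De Bruijn--Erd\H{o}s theorem (cited from Diestel) to reduce $\chi(G_{\mathcal{W}})$ to the supremum over finite subgraphs and then applies $\chi(H)\le\Delta(H)+1$ for each finite $H$; you instead run the greedy colouring directly on $\N$ in its natural order, which works precisely because each vertex has only finitely many neighbours when $D<\infty$. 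Your route is more elementary and self-contained (no compactness needed), while the paper's route makes the reduction to the familiar finite statement explicit. One tiny caveat: your identity $|\bigcup_{u\ni i}u|=1+\deg(i)$ fails for vertices $i$ not contained in any $u\in\mathcal{W}$ (the union is then empty), but those vertices have degree $0$ and the inequality $\deg(i)\le D-1$ still holds once $D\ge 1$, so the argument is unaffected; the paper glosses over the same point.
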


\begin{proof}
By Lemma~\ref{lem_graph} it follows that it suffices to show that $\chi(G_{\mathcal{W}})$ satisfies the bound. By \cite[Theorem~8.1.3]{Diestel} it follows that $\chi(G_{\mathcal{W}})$ is equal to the maximum of the chromatic numbers $\chi(H)$ over all finite subgraphs $H$ of $G_{\mathcal{W}}$. Thus it suffices to show that for all finite subgraphs $H$ of $G_\mathcal{W}$ the chromatic number $\chi(H)$ satisfies the bound.

Let $H$ be an arbitrary finite subgraph of $G_{\mathcal{W}}$ and let $V_H$ denote the set of vertices of $H$. By \cite[p.115]{Diestel} we have $\chi(H) \le \Delta(H)+1$, where $\Delta(H)$ is the maximum degree of the vertices of $H$. But the degree of a vertex $i$ in the graph $G_{\mathcal{W}}$ is equal to
\begin{equation*}
\Delta(i) = \left|\bigcup_{u \in \mathcal{W}: i \in u} u \right| - 1.
\end{equation*}
By taking the maximum of the degrees over all vertices in the graph $G_{\mathcal{W}}$ we obtain the result.
\end{proof}

In some circumstances the above result can be slightly improved using Brooks' theorem from graph theory, see \cite[Theorem~8.1.3]{Diestel}.

\begin{theorem}\label{Brooks}
Let $\mathcal{W}\subseteq \U$ such that $\sup_{u \in \mathcal{W}} |u| \ge 3$. Let $Z =  \sup_{i \in \mathbb{N}} \left|\bigcup_{u \in \mathcal{W}: i \in u} u \right|$. Let $i_1,i_2,\ldots$ be the set of vertices for which $|\cup_{u \in \mathcal{W}: i_k \in u} u| = Z$. Assume that for each $k \ge 1$ the subgraph consisting of the vertices in $\cup_{u \in \mathcal{W}: i_k \in u} u$ is not complete. Then
\begin{equation*}
d^\ast(\mathcal{W}) \le \max_{i \in \mathbb{N}} \left|\bigcup_{u \in \mathcal{W}: i \in u} u \right| -1.
\end{equation*}
\end{theorem}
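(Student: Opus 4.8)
The plan is to reduce the statement to Brooks' theorem exactly as Theorem~\ref{bound_dstar} reduces to the $\Delta(H)+1$ bound. First I would invoke Lemma~\ref{lem_graph} to replace $d^\ast(\mathcal{W})$ by the chromatic number $\chi(G_{\mathcal{W}})$, and then use \cite[Theorem~8.1.3]{Diestel} (the same compactness argument as before) to reduce to bounding $\chi(H)$ for every finite subgraph $H$ of $G_{\mathcal{W}}$. Set $Z = \sup_{i\in\N}\bigl|\bigcup_{u\in\mathcal{W}:\,i\in u} u\bigr|$, which equals $\Delta(G_{\mathcal{W}})+1$ by the degree computation in the proof of Theorem~\ref{bound_dstar}; so $\Delta(H)\le \Delta(G_{\mathcal{W}}) = Z-1$ for every finite subgraph $H$.

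Next I would split into two cases according to whether $\Delta(H) < Z-1$ or $\Delta(H) = Z-1$. If $\Delta(H) < Z-1$, then the elementary bound $\chi(H)\le \Delta(H)+1 \le Z-1$ already gives the claim. So the interesting case is $\Delta(H) = Z-1$, i.e.\ $H$ is $(Z-1)$-regular at its maximum-degree vertices. Brooks' theorem states that a finite connected graph $H$ satisfies $\chi(H)\le \Delta(H)$ unless $H$ is a complete graph or an odd cycle. Since $\sup_{u\in\mathcal{W}}|u|\ge 3$, the graph $G_{\mathcal{W}}$ contains a triangle (a complete subgraph on $|u|\ge 3$ vertices), so $\Delta(G_{\mathcal{W}})\ge 2$ and in fact the maximum-degree vertices have degree $Z-1\ge 2$; moreover any connected component of $H$ that attains $\Delta(H)=Z-1$ at a vertex $i_k$ contains, in the neighborhood of $i_k$, exactly the vertex set $\bigcup_{u\in\mathcal{W}:\,i_k\in u} u$, which by hypothesis does not induce a complete subgraph of $G_{\mathcal{W}}$. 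One then has to argue that this rules out $H$ (restricted to that component) being complete or an odd cycle; for an odd cycle every vertex has degree $2$, and if $Z-1>2$ we are done, while if $Z-1=2$ the hypothesis that the relevant induced subgraph is not complete directly excludes a triangle, the only odd cycle with $\Delta=2$ realized here — and in any case any odd cycle of length $\ge 5$ would force $\sup_{u}|u|=2$, contradicting our standing assumption that $G_{\mathcal{W}}$ contains a $K_3$.

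Putting this together, for each finite subgraph $H$ of $G_{\mathcal{W}}$, applying Brooks' theorem to each connected component separately, we obtain $\chi(H)\le \max(\Delta(H), \text{small-component bounds}) \le Z-1 = \max_{i\in\N}\bigl|\bigcup_{u\in\mathcal{W}:\,i\in u} u\bigr| - 1$, and taking the maximum over all finite $H$ gives $\chi(G_{\mathcal{W}})\le Z-1$, hence $d^\ast(\mathcal{W})\le Z-1$ by Lemma~\ref{lem_graph}.

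The main obstacle I anticipate is the bookkeeping in the exceptional cases of Brooks' theorem: one must check carefully that a connected component of a finite subgraph $H$ which attains the maximum degree $Z-1$ cannot be a complete graph or an odd cycle, and this requires translating the hypothesis "the induced subgraph on $\bigcup_{u\in\mathcal{W}:\,i_k\in u} u$ is not complete" from the infinite graph $G_{\mathcal{W}}$ down to the finite subgraph $H$ — in particular noting that the relevant neighborhood structure around $i_k$ in $G_{\mathcal{W}}$ is inherited by any $H$ that witnesses $\Delta(H)=Z-1$ at $i_k$. The subtlety is that a finite subgraph need not contain all of $G_{\mathcal{W}}$'s edges, so one should work with a subgraph $H$ chosen to realize $\chi(G_{\mathcal{W}})$ and argue that if such an $H$ has a component equal to $K_Z$, then that component already lies inside some $\bigcup_{u\in\mathcal{W}:\,i_k\in u}u$ which would then be complete, contradicting the hypothesis. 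Once that reduction is pinned down, the rest is the routine case analysis sketched above.
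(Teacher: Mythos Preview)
Your approach is exactly what the paper intends: the paper does not give a written proof of this theorem but simply states it with the preceding remark that it follows ``using Brooks' theorem from graph theory, see \cite[Theorem~8.1.3]{Diestel}.'' Your reduction via Lemma~\ref{lem_graph} and the compactness argument (as in the proof of Theorem~\ref{bound_dstar}), followed by an application of Brooks' theorem to each connected component of a finite subgraph, is precisely the intended route.

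One small remark on your case analysis: your worry about odd cycles when $Z-1=2$ can be disposed of more directly. If some $u\in\mathcal{W}$ has $|u|\ge 3$, then every vertex $a\in u$ satisfies $\bigl|\bigcup_{v\ni a} v\bigr|\ge |u|\ge 3$, so $Z\ge |u|$; and if equality $Z=|u|$ held, then $\bigcup_{v\ni a} v=u$ and the induced subgraph on this set would be the complete graph $K_{|u|}$, contradicting the hypothesis. Hence $Z\ge \sup_{u}|u|+1\ge 4$, so $Z-1\ge 3$ and any odd-cycle component has chromatic number $3\le Z-1$. Similarly, a finite subgraph component equal to $K_Z$ would force each of its vertices to have degree exactly $Z-1$ in $G_{\mathcal{W}}$, making it one of the $i_k$, and then $\bigcup_{v\ni i_k} v$ would be exactly that $K_Z$, again contradicting the hypothesis. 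So the exceptional cases of Brooks' theorem are ruled out cleanly, and the bookkeeping you flagged as an obstacle is not really an issue.
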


Various other bounds on $d^\ast$ can be obtained from graph theory via bounds on the chromatic number of the associated graph, see for instance \cite{Diestel}.

\begin{remark}
In general it is difficult to find a function $\phi$ as in (\ref{phi})
for a given set $\mathcal{W}$. This can be done by a greedy algorithm for graph coloring, see \cite[p. 114]{Diestel}. However, this algorithm does not necessarily find a coloring with the smallest possible number of colors.
\end{remark}

A particular class of weights whose set $\mathcal{W} = \AC$ of active coordinate sets has a finite minimal algorithmic dimension $d$, is the class of finite-intersection weights defined in \cite{Gne10}.

\begin{definition}
\label{def-fiw}
Let $\rho \in \N$.
The finite-order weights $(\gamma_{u_i})_{i\in \N}$, where $\gamma_{u_i} > 0$, are
called \emph{finite-intersection weights} with \emph{intersection
degree} at most $\rho\in \N_0$ if we have
\begin{equation}
\label{fiw}
|\{j\in\N \, | \, u_i\cap u_j \neq \emptyset \}| \le 1+\rho
\hspace{2ex}\text{for all $i\in\N$.}
\end{equation}
\end{definition}

Note that for finite-order weights condition (\ref{fiw}) is
equivalent to the following
condition: There exists an $\eta\in\N$ such that
\begin{equation}
\label{cond}
|\{ i\in\N \,|\, k\in u_i \}| \le \eta
\hspace{2ex}\text{for all $k\in\N$.}
\end{equation}
Indeed, if (\ref{fiw}) is satisfied, then (\ref{cond}) holds with
$\eta \le 1+\rho$, and if (\ref{cond}) is satisfied, then
(\ref{fiw}) holds with $\rho \le (\eta-1)\omega$.

Due to \cite[Lemma~3.10]{Gne10} the set $\AC$ of active
coordinate sets of finite intersection
weights has algorithmic dimension $d^\ast(\AC)$ at most
$[\eta(\omega -1)+1]$; this was shown by constructing inductively a
mapping $\phi:\N \to [\eta(\omega -1)+1]$ that satisfies (\ref{phi}). It also follows from Theorem~\ref{bound_dstar} by
\begin{equation*}
d^\ast(\mathcal{A}) \le \max_{i \in \mathbb{N}} \left|\bigcup_{u \in \mathcal{A}: i \in u} u\right| \le \max_{i \in \mathbb{N}} |\{ u \in \mathcal{A}\,|\, i \in u\} | (\omega-1) + 1 \le \eta (\omega-1) + 1.
\end{equation*}

\section{Lower bounds}
\label{LB}

Here we provide lower bounds for the exponents of tractability in the nested
and in the unrestricted subspace sampling model.
We assume that there exist constants $\varrho,\beta >0$ such that
the $n$th minimal error of univariate integration on $H=H(K)$ satisfies
\begin{equation}
\label{assuni}
e(n;H) \ge \varrho (n+1)^{-\beta}
\hspace{2ex}\text{for all $n\in\N_0$,}
\end{equation}
where
\begin{equation}
e(n;H) := \inf \bigg\{e(Q;H) \,\bigg|\,
Q(f) = \sum^n_{i=1} a_i f(x^{(i)}) \hspace{1ex}
\text{with $a_i\in \R$, $x^{(i)}\in D$} \bigg\}.
\end{equation}

Since for $\emptyset \neq u \in \U$ the integration
problem over $H_u$ is at least as hard as in the univariate
case, assumption (\ref{assuni}) results in
\begin{equation}
\label{betalowbou}
e(Q_{u}; H_u) \ge \varrho C^{\frac{|u|-1}{2}}_0 (n+1)^{-\beta}
\end{equation}
for any quadrature of the form
\begin{equation*}
Q_{u}(f) = \sum_{i=1}^n a_i f(x^{(i)}),
\hspace{2ex} a_i\in\R, x^{(i)}\in D^u, f\in H_u,
\end{equation*}
see \cite[Theorem~17.11]{NW10}.
If now $Q$ is an algorithm of the form (\ref{def-alg}) and
$(Q)_u=Q\circ P_u$, then (\ref{worid}) and (\ref{betalowbou}) imply
\begin{equation}
\label{blowbou}
[e(Q;\Hg)]^2 = \sum^\infty_{j=0} \gamma_{u_j}
[e((Q)_{u_j}; H_{u_j})]^2
\ge b^2 \sum^\infty_{j=1}
\frac{\widehat{\gamma}_{u_j}}{(n_j+1)^{2\beta}},
\end{equation}
where $b^2 := \varrho^2 C_0^{-1}$ and
$n_j := |\{v_i\,|\, u_j\subseteq v_i\}|$.
Since we assumed that $\AC$ is non-trivial, we obtain from (\ref{blowbou})
\begin{equation}
\label{pmodbeta}
p^{\nes} \ge p^{\unr} \ge 1/\beta.
\end{equation}

\subsection{Lower bounds for general weights}
\label{LB_GEN}

In this section we study general weights; here ``general'' means that we only
require the condition (\ref{summable}) to hold.

\subsubsection{Nested subspace sampling}

We start with a new lower bound for the exponent of strong tractability
for general weights in the nested subspace sampling model.

\begin{theorem}
\label{NesLowBou}
Let $\$(k) = \Omega(k^s)$ for some $s> 0$, and let
$\bsgamma$ be weights that satisfy (\ref{summable}).
Then $I$ is only strongly tractable in the nested subspace sampling model if
$\decay_{\bsgamma} >1$. In this case,
\begin{equation}
\label{neslowbou}
p^{\nes} \ge \max \left\{ \frac{1}{\beta}\,,\,
\sup_{\sigma\in\N} \frac{2 s/t^*_{\sigma}}{\decay_{\bsgamma,\sigma} - 1} \right\}.
\end{equation}
\end{theorem}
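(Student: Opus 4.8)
The plan is to derive the lower bound on $p^{\nes}$ from the basic error inequality \eqref{blowbou} together with a careful accounting of the cost in the nested subspace sampling model. First I would fix an arbitrary algorithm $Q$ of the form \eqref{def-alg} with $\cost_{\nes}(Q) \le N$, and fix a strictly increasing sequence $\bsw = (w_i)_{i\in\N}$ of coordinate sets that (nearly) realizes the infimum defining $\cost_{\nes}(Q)$. Since $\$(k) = \Omega(k^s)$, the bound $\mathfrak{c}_{\bsw,c}(Q)\le N$ forces each sample point $(\bst_{v_i};\bsc)$ to be supported on some $w_{m_i}$ with $\$(|w_{m_i}|)$ not too large, and the number $n$ of points together with the sizes $|w_{m_i}|$ is constrained by $\sum_i \$(|w_{m_i}|) = O(N)$. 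The key structural observation is that, because the $w_i$ are nested, all $v_i$ with $v_i\subseteq w_m$ are ``simultaneously available'' at cost roughly $\$(|w_m|)$ per point. Thus, for a fixed cut-off order $\sigma\in\N$ and a fixed level $m$, among the first $d$ sets $u_1(\sigma),\dots,u_d(\sigma)$ the ones contained in $w_m$ are precisely those realized by sample points of cost $\le \$(|w_m|)$.

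Next I would exploit the definition of $t^*_\sigma$: by definition, for every $v\in\U$ we have $|\{i\,:\,u_i(\sigma)\subseteq v\}| \le C_t |v|^{t}$ for any $t > t^*_\sigma$. Applying this with $v = w_m$ shows that the number of indices $j$ with $u_j(\sigma)\subseteq w_m$ is at most $C_t|w_m|^t$. Combining this with the cost constraint — only $O(N/\$(k))$ points can have support-size about $k$, and $\$(k)=\Omega(k^s)$ — one concludes that the total number of indices $j$ for which the projected quadrature $(Q)_{u_j(\sigma)}$ uses a positive number $n_j$ of points is $O(N^{t/s})$ in an appropriate sense; more precisely, $\sum_j n_j$ over all $j$ with $u_j(\sigma)$ active is controlled, and on the tail of indices $j \gtrsim N^{t^*_\sigma/s}$ (up to $\e$-losses in the exponent) we must have $n_j = 0$. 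Feeding $n_j = 0$ for $j$ beyond such a threshold $J \asymp N^{t^*_\sigma/s}$ into \eqref{blowbou} gives
\begin{equation*}
[e(Q;\Hg)]^2 \ge b^2 \sum_{j > J} \widehat{\gamma}_{u_j(\sigma)}^{(\sigma)} = b^2\,\tail_{\bsgamma,\sigma}(J).
\end{equation*}

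Then I would translate the tail bound into a rate. If $\decay_{\bsgamma,\sigma} = \tau$, then for any $\tau' < \tau$ we have $\widehat{\gamma}_{u_j(\sigma)}^{(\sigma)} = \Omega(j^{-\tau'})$ along a subsequence is the wrong direction; instead one uses that $\decay_{\bsgamma,\sigma}$ being the supremum of $p$ with $\widehat\gamma_{u_j(\sigma)}^{(\sigma)} j^p\to 0$ means that for $p > \decay_{\bsgamma,\sigma}$ the sequence does \emph{not} decay like $j^{-p}$, hence $\tail_{\bsgamma,\sigma}(J) = \Omega(J^{-(\decay_{\bsgamma,\sigma}-1)-\eta})$ fails — so one argues in the contrapositive: assuming strong tractability with exponent $p$, the error must be $O(N^{-1/p+\e})$, and comparing with $e(Q;\Hg)^2 \ge b^2\tail_{\bsgamma,\sigma}(J)$ with $J\asymp N^{t^*_\sigma/s}$ forces $\tail_{\bsgamma,\sigma}(J) = O(J^{-(2s/t^*_\sigma)/p + \e'})$ for all $\e'>0$, which in turn (since a summable tail with this decay rate is possible only when the summands decay at least one power faster) gives $2s/(t^*_\sigma p) \le \decay_{\bsgamma,\sigma} - 1$, i.e. $p \ge (2s/t^*_\sigma)/(\decay_{\bsgamma,\sigma}-1)$. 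Taking the supremum over $\sigma\in\N$ and combining with the trivial bound $p^{\nes}\ge 1/\beta$ from \eqref{pmodbeta} yields \eqref{neslowbou}; moreover, if $\decay_{\bsgamma}\le 1$ the tail does not go to zero polynomially and no finite exponent $p$ works, proving the necessity of $\decay_{\bsgamma}>1$. I expect the main obstacle to be the bookkeeping in the middle step: correctly relating the nested-cost constraint $\sum_i \$(|w_{m_i}|) = O(N)$ and the combinatorial bound from $t^*_\sigma$ to the threshold index $J$, being careful that the infimum over sequences $\bsw$ in the definition of $\cost_{\nes}$ is taken \emph{after} $Q$ is fixed, so that one cannot assume the $v_i$ themselves are nested — only that some nesting sequence covers them cheaply.
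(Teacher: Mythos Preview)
Your approach is the same as the paper's, but you are over-complicating the ``bookkeeping'' step you yourself flag as the obstacle. The paper avoids all of the level-by-level accounting (tracking individual $w_{m_i}$, counting ``$O(N/\$(k))$ points of support-size about $k$'', controlling $\sum_j n_j$) by one clean observation: once you fix a sequence $\bsw$ with $\mathfrak{c}_{\bsw,c}(Q)\le N+1$, let $m$ be the largest index with $\$(|w_m|)\le N+1$. Then \emph{every} sample set $v_i$ satisfies $v_i\subseteq w_m$, because if some $v_i\nsubseteq w_m$ then by nestedness $v_i\nsubseteq w_1,\dots,w_m$, so the cost of that single evaluation is at least $\$(|w_{m+1}|)>N+1$, contradicting the total cost bound. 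Hence for every $j$ with $u_j(\sigma)\nsubseteq w_m$ one has $n_j=0$ outright, and \eqref{blowbou} immediately gives
\[
[e(Q;\Hg)]^2 \;\ge\; b^2\sum_{j:\,u_j(\sigma)\nsubseteq w_m}\widehat{\gamma}^{(\sigma)}_{u_j(\sigma)}
\;\ge\; b^2\,\tail_{\bsgamma,\sigma}(\tau_m),
\]
where $\tau_m=|\{j:u_j(\sigma)\subseteq w_m\}|\le C_t|w_m|^t=O(N^{t/s})$ for any $t>t^*_\sigma$, using $\$(|w_m|)=\Omega(|w_m|^s)$ and $\$(|w_m|)\le N+1$. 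No stratification by level, no counting of points, and your worry about the infimum over $\bsw$ being taken after $Q$ is fixed evaporates: whatever $\bsw$ you pick, the argument runs through one single set $w_m$.

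Your contrapositive translation from the tail bound to $\decay_{\bsgamma,\sigma}$ is correct and arguably cleaner than the paper's direct ``$=\Omega(\tau_m^{1-p_\sigma})$'' phrasing: from strong tractability with exponent $p$ you get $\tail_{\bsgamma,\sigma}(J)=O(J^{-2s/(tp')})$ for all $p'>p$, $t>t^*_\sigma$; monotonicity of $(\widehat{\gamma}^{(\sigma)}_{u_j(\sigma)})_j$ then gives $\widehat{\gamma}^{(\sigma)}_{u_j(\sigma)}=O(j^{-1-2s/(tp')})$, hence $\decay_{\bsgamma,\sigma}\ge 1+2s/(tp')$, and letting $t\downarrow t^*_\sigma$, $p'\downarrow p$ yields the bound. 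The necessity of $\decay_{\bsgamma}>1$ follows in the same way (if $\decay_{\bsgamma}\le 1$ the tail cannot be $O(J^{-q})$ for any $q>0$).
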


\begin{proof}
 Let $Q$ be of form (\ref{def-alg}) with $\cost_{\nes}(Q) \le N$.
Then there exists an increasing sequence of sets $\bsw = (w_i)_{i\in\N}$
such that $c_{\bsw,c}(Q) \le N+1$. Let $m$ be the largest integer that satisfies
$\$(|w_m|) \le N+1$. Hence, $v_1,\ldots, v_n \subseteq w_m$. Let $\sigma \in \N$, and let $\bsgamma^{(\sigma)}$ be the corresponding
cut-off weights of $\bsgamma$. Then it is easily seen that
$e(Q; \mathcal{H}_{\bsgamma}) \ge  e(Q; \mathcal{H}_{\bsgamma^{(\sigma)}})$,
cf. \cite[Remark~3.3]{Gne10}. Thus we get from (\ref{blowbou})
\begin{equation*}
[e(Q; \mathcal{H}_{\bsgamma})]^2 \ge b^2  \sum_{j: u_j(\sigma) \nsubseteq
w_m} \widehat{\gamma}^{(\sigma)}_{u_j(\sigma)}.
\end{equation*}
Let now $t>t^*_{\sigma}$. Then, for a suitable constant $C_t>0$,
\begin{equation*}
 \tau_m := |\{j \,|\, u_j(\sigma) \subseteq w_m\}| \le C_t |w_m|^t
= O(N^{t/s}),
\end{equation*}
since $N +1 \ge \$(|w_m|) = \Omega(|w_m|^s)$. Hence we obtain
for every $p_\sigma > \max\{1,\decay_{\bsgamma, \sigma}\}$
\begin{equation*}
[e(Q; \mathcal{H}_{\bsgamma})]^2 \ge b^2  \sum_{j=\tau_m +1}^\infty \widehat{\gamma}^{(\sigma)}_{u_j(\sigma)}
= \Omega(\tau_m^{1-p_\sigma})
= \Omega (N^{t(1-p_\sigma)/s}).
\end{equation*}
This shows that $I$ is only strongly tractable if $\decay_{\bsgamma} >1$.
In that case,
\begin{equation*}
p^{\nes} \ge  \frac{2 s/t^*_{\sigma}}{\decay_{\bsgamma,\sigma} - 1}.
\end{equation*}
From this and (\ref{pmodbeta}) follows the statement of the theorem.
\end{proof}

Note that we have on the one hand $t^*_1 \le t^*_2 \le t^*_3 \le
\cdots$, and on the other hand $\decay_{\bsgamma,1} \ge
\decay_{\bsgamma,2} \ge \decay_{\bsgamma,3} \ge \cdots$.
Thus it is not a priori clear for which $\sigma\in \N$ the supremum
in (\ref{unrlowbou}) is attained. As shown in \cite{Gne10} and as we will
see below,
this may vary for different classes of weights.

\subsubsection{Unrestricted subspace sampling}

The next theorem is a generalization of \cite[Cor.~4.1]{Gne10}, where
only the specific kernel $K(x,y) = \min\{x,y\}$ on $D\times D = [0,1]^2$
was treated.

\begin{theorem}
\label{UnrLowBou}
Let $\$(k) = \Omega(k^s)$ for some $s> 0$, and let
$\bsgamma$ be weights that satisfy (\ref{summable}).
Then $I$ is only strongly tractable in the unrestricted subspace sampling model
if $\decay_{\bsgamma} >1$. In this case,
\begin{equation}
\label{unrlowbou}
p^{\unr} \ge \max \left\{ \frac{1}{\beta}\,,\,
\sup_{\sigma\in\N} \frac{2\min\{ 1,s/t^*_{\sigma} \}}{\decay_{\bsgamma,\sigma} - 1} \right\}.
\end{equation}
\end{theorem}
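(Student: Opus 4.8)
The plan is to follow the structure of the proof of Theorem~\ref{NesLowBou}, since the unrestricted model only differs from the nested model in how the cost of a function evaluation is accounted for. First I would take an arbitrary algorithm $Q$ of the form \eqref{def-alg} with $\cost_{\unr}(Q) \le N$, so that $\sum_{i=1}^n \$(|v_i|) \le N$; in particular $n \le N$ (since $\$ \ge 1$) and each $|v_i|$ satisfies $\$(|v_i|) \le N$, hence $|v_i| = O(N^{1/s})$ by the assumption $\$(k) = \Omega(k^s)$. Fix $\sigma \in \N$ and pass to the cut-off weights $\bsgamma^{(\sigma)}$, using the monotonicity $e(Q;\Hg) \ge e(Q;\mathcal{H}_{\bsgamma^{(\sigma)}})$ exactly as in the nested case, so that \eqref{blowbou} gives
\begin{equation*}
[e(Q;\Hg)]^2 \ge b^2 \sum_{j:\, u_j(\sigma) \nsubseteq v_i \text{ for all } i} \widehat{\gamma}^{(\sigma)}_{u_j(\sigma)}.
\end{equation*}

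The key step is to bound the number $\tau := |\{ j \,|\, u_j(\sigma) \subseteq v_i \text{ for some } i\}|$ of ``cheaply sampled'' coordinate sets. Here, unlike the nested model, the sample points need not lie in one common subspace $w_m$; instead they are spread over the $n$ sets $v_1,\dots,v_n$. For each fixed $i$, the definition of $t^*_\sigma$ gives $|\{ j \,|\, u_j(\sigma) \subseteq v_i\}| \le C_t |v_i|^t$ for any $t > t^*_\sigma$, so by a union bound $\tau \le C_t \sum_{i=1}^n |v_i|^t$. Now I would split into two regimes. If $t \le s$ (which is relevant when $t^*_\sigma < s$), then $\sum_i |v_i|^t \le \sum_i |v_i|^s$, and since $\$(k) = \Omega(k^s)$ we get $|v_i|^s = O(\$(|v_i|))$, hence $\sum_i |v_i|^s = O(\sum_i \$(|v_i|)) = O(N)$; this yields $\tau = O(N^{t/s})$, matching the exponent $s/t^*_\sigma$ after letting $t \downarrow t^*_\sigma$. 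In the other regime one uses the crude bound $|v_i| = O(N^{1/s})$ for each of the at most $n \le N$ sets, giving $\tau = O(n \cdot N^{t/s}) = O(N^{1 + t/s})$; but a better route giving the exponent $1$ is simply $\tau \le n \cdot \max_i |\{j \,|\, u_j(\sigma)\subseteq v_i\}|$, and since every $u_j(\sigma)$ that is counted has $|u_j(\sigma)| \le \sigma$ and $t^*_\sigma \ge 1$, one gets $\tau = O(N \cdot N^{(t-1)/s})$... — more cleanly, combining the two estimates $\tau = O(N^{t/s})$ and $\tau = O(N)$ (the latter because $\tau \le \sum_i |v_i|^{t} $ and each summand contributes, together with $n\le N$, an overall count that is $O(N)$ when $t$ is taken close to $t^*_\sigma\le$ the active set sizes) gives $\tau = O(N^{\min\{1,\, t/s\}})$, and letting $t \downarrow t^*_\sigma$ yields $\tau = O(N^{\min\{1,\, s/t^*_\sigma\}})$ up to an arbitrarily small loss in the exponent. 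The $\min$ with $1$ is precisely the new feature compared with \eqref{neslowbou}, and it reflects that in the unrestricted model one can never sample more than $N$ points, so at most $O(N)$ coordinate sets can ever be hit however small the sets are.

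Once $\tau = O(N^{\min\{1,s/t^*_\sigma\}})$ is established, the rest is verbatim the nested argument: for every $p_\sigma > \max\{1, \decay_{\bsgamma,\sigma}\}$ the tail bound gives
\begin{equation*}
[e(Q;\Hg)]^2 \ge b^2 \sum_{j = \tau+1}^\infty \widehat{\gamma}^{(\sigma)}_{u_j(\sigma)} = \Omega(\tau^{1-p_\sigma}) = \Omega\!\left(N^{\min\{1, s/t^*_\sigma\}(1-p_\sigma)}\right),
\end{equation*}
so if $\decay_{\bsgamma,\sigma} \le 1$ for the full weights (i.e. $\sigma = \infty$) then $e(Q;\Hg)$ cannot decay polynomially, ruling out strong tractability; and when $\decay_{\bsgamma} > 1$, comparing with $e(Q;\Hg) = O(N^{-1/p^{\unr}})$ forces $p^{\unr} \ge 2\min\{1,s/t^*_\sigma\}/(\decay_{\bsgamma,\sigma}-1)$. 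Taking the supremum over $\sigma \in \N$ and combining with the trivial bound $p^{\unr} \ge 1/\beta$ from \eqref{pmodbeta} gives \eqref{unrlowbou}.

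The main obstacle is the bookkeeping in the counting step: establishing that $\tau = O(N^{\min\{1, s/t^*_\sigma\}})$ cleanly, i.e. simultaneously obtaining the $O(N^{s/t^*_\sigma})$ bound (from the $t^*_\sigma$-covering estimate summed against the cost budget) and the $O(N)$ bound (from $n \le N$ together with the fact that each counted set has size at most $\sigma$ so that each $v_i$ contributes a bounded-by-$N$-friendly amount), and then checking that taking $t \downarrow t^*_\sigma$ only costs an arbitrarily small exponent loss that disappears in the final $\inf$/$\sup$. Everything else is a direct transcription of the proof of Theorem~\ref{NesLowBou} with $w_m$ replaced by the collection $\{v_i\}$ and the single cost constraint $\$(|w_m|) \le N+1$ replaced by the additive constraint $\sum_i \$(|v_i|) \le N$.
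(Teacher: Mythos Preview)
Your overall strategy is right and matches what the paper does (the paper simply cites \cite{Gne10} and notes that the only change is replacing $p^*\ge 1$ by $p^{\unr}\ge 1/\beta$): pass to cut-off weights, use \eqref{blowbou} and discard only the terms with $n_j\ge 1$, bound the number $\tau$ of ``hit'' coordinate sets, and finish with the tail estimate. The problem is entirely in your bookkeeping for $\tau$, which is exactly the step you flagged as the main obstacle.

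Your claimed bound $\tau = O(N^{\min\{1,\,s/t^*_\sigma\}})$ is false; the correct bound is $\tau = O(N^{\max\{1,\,t/s\}})$ for any $t>t^*_\sigma$, i.e.\ exponent $\max\{1,\,t^*_\sigma/s\}$ in the limit. You have the two regimes reversed and you repeatedly conflate $t/s$ with $s/t^*_\sigma$. Concretely: in the regime $t\le s$ your chain $\sum_i |v_i|^t \le \sum_i |v_i|^s = O(N)$ only gives $\tau = O(N)$, not $\tau = O(N^{t/s})$ (take $n=N$ points with $|v_i|=1$ to see the bound cannot be improved). In the regime $t>s$ you do \emph{not} get $\tau = O(N)$; instead use $|v_i|^{t} = |v_i|^{s}\,|v_i|^{t-s}$, the uniform estimate $|v_i|=O(N^{1/s})$ coming from $\$(|v_i|)\le N$, and the budget $\sum_i|v_i|^s=O(N)$ to obtain $\sum_i|v_i|^t = O(N^{(t-s)/s})\sum_i|v_i|^s = O(N^{t/s})$. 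So the two valid bounds are $\tau=O(N)$ when $t\le s$ and $\tau=O(N^{t/s})$ when $t>s$; together, $\tau=O(N^{\max\{1,\,t/s\}})$. Plugging this correct exponent $r=\max\{1,t^*_\sigma/s\}$ into the tail estimate yields $[e(Q;\Hg)]^2=\Omega(N^{-r(p_\sigma-1)})$ and hence $p^{\unr}\ge 2/(r(p_\sigma-1)) = 2\min\{1,s/t^*_\sigma\}/(p_\sigma-1)$, which is the desired inequality. Note that with your (incorrect) $r=\min\{1,s/t^*_\sigma\}$ the same computation would give $p^{\unr}\ge 2/(\min\{1,s/t^*_\sigma\}(p_\sigma-1))$, which is \emph{not} the bound you wrote down; you landed on the right formula only because two errors cancelled.
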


\begin{proof}
The proof of Theorem \ref{UnrLowBou} is essentially identical with the
one of Theorem 3.4 and Corollary 4.1 in \cite{Gne10}. One just has
to keep in mind that the simple lower bound $p^* \ge 1$ appearing there has to
be replaced by $p^{\unr} \ge 1/\beta$, see (\ref{pmodbeta}).
\end{proof}

\subsection{Lower bounds for special classes of weights}
\label{LB_SPEC}

\subsubsection{Product and order-dependent weights}

Recall that POD weights include as special cases product weights and finite
product weights. We now present a generalized version of \cite[Lemma~3.8]{Gne10},
which holds not only for product and finite product weights, but for general
POD weights.

\begin{lemma}
\label{Lemma3.8}
Let $\bsgamma = (\gamma_u)_{u\in \U}$ be POD weights as in (\ref{pod}). Then
\begin{equation*}
\decay_{\bsgamma,1} = \decay_{\bsgamma,\sigma}
\hspace{3ex}\text{for all $\sigma \in\N$.}
\end{equation*}
This holds still if we replace condition (\ref{summable}) by the weaker condition
that the weights $\widehat{\bsgamma}$ are bounded and have only $0$ as accumulation point.
\end{lemma}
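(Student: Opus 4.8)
The plan is to compare the sorted sequence of $\widehat\gamma_{u_j(\sigma)}^{(\sigma)}$ for a fixed finite $\sigma$ against the sorted sequence for $\sigma = 1$ (i.e., singletons only) and show that, up to constants and a polynomial reindexing, they have the same decay rate. Since $\decay_{\bsgamma,\sigma}$ is nonincreasing in $\sigma$, we automatically have $\decay_{\bsgamma,1} \ge \decay_{\bsgamma,\sigma}$, so the real work is the reverse inequality $\decay_{\bsgamma,1} \le \decay_{\bsgamma,\sigma}$ for every finite $\sigma$.

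First I would record the shape of the weights: for POD weights $\widehat\gamma_u = \Gamma_{|u|}\,\widehat C_0^{\,|u|}\,\prod_{j\in u}\gamma_j$ where I absorb $C_0$ into the product, so $\widehat\gamma_u = \widetilde\Gamma_{|u|}\prod_{j\in u}(\gamma_j C_0)$ with $\widetilde\Gamma_k = \Gamma_k C_0^{\,?}$ — more cleanly, set $\beta_j := \gamma_j C_0 \ge 0$ nonincreasing and $\widehat\gamma_u = \Gamma_{|u|}\prod_{j\in u}\beta_j$ after the relabelling $\Gamma_k \mapsto \Gamma_k$. The key structural observation is that for a set $u$ of size $k$, $\widehat\gamma_u \le \Gamma_k \beta_1^{k-1}\beta_{\max(u)}\le \Gamma_k\beta_1^{k-1} \cdot (\text{the largest singleton weight not yet used})$; more to the point, among all sets of size exactly $k$, the ordering is governed by $\prod_{j\in u}\beta_j$, and the $m$-th largest product $\prod_{j\in u}\beta_j$ over $|u|=k$ is comparable to $\beta_{u_m(1)}$ in the sense that it lies between (a constant times) $\beta_{\lceil m^{1/k}\rceil + c}^{\,k}$ type bounds. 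The crucial quantitative fact I would prove as a sub-lemma: for fixed $k$, if $(\beta_j)$ has decay exponent $\decay_{\bsgamma,1} =: p$ (meaning $\beta_j j^{p'} \to 0$ for all $p' < p$ and not for $p' > p$), then the sorted sequence of $k$-fold products $\prod_{j\in u}\beta_j$ over $|u|=k$ has decay exponent exactly $p$ as well. This is because the $m$-th largest such product is $\Theta$-comparable (up to subexponential-in-$m$ factors, which do not affect the polynomial decay exponent) to $\beta_{\Theta(m^{1/k})}^{\,k}$, and $(\beta_{m^{1/k}})^k \sim (m^{-p'/k})^k = m^{-p'}$; one has to be a little careful but the point is that taking $k$-fold products and then re-sorting preserves the polynomial decay exponent, with the binomial-coefficient counting only contributing polynomial-in-$m$ overhead that again does not change the exponent. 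Handling the $\Gamma_k$ factors is then easy: since $\widehat{\bsgamma}$ is bounded (under either hypothesis), $\Gamma_k \beta_1^{\,k}$ is bounded, so for each fixed $k$ the factor $\Gamma_k$ is a fixed constant that drops out of the decay-exponent computation; and since only $\sigma$ distinct values of $k$ (namely $k \le \sigma$) contribute to $\bsgamma^{(\sigma)}$, the merged sorted sequence is a finite interleaving of $\sigma$ sequences each of decay exponent $p$, hence itself of decay exponent $p$.

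Concretely the steps are: (1) reduce to showing $\decay_{\bsgamma,\sigma}\ge \decay_{\bsgamma,1}$; (2) normalize, writing $\widehat\gamma_u = \Gamma_{|u|}\prod_{j\in u}\beta_j$ with $\beta_j$ nonincreasing; (3) prove the sub-lemma that for fixed $k$ the re-sorted sequence of $k$-fold products has the same decay exponent $p := \decay_{\bsgamma,1}$ as $(\beta_j)_j$ — this uses that the $m$-th largest product over $|u|=k$ is sandwiched, $\beta_1^{k-1}\beta_{m+k-1} \le (m\text{-th largest}) $ is false in general, so instead I use the two-sided estimate that the product of the $k$ largest available $\beta$'s after removing the top $\Theta(m^{1/k})$ of them bounds it below, and $\beta_{\lfloor c m^{1/k}\rfloor}^k$ bounds it above, for suitable constants; (4) note $\Gamma_k$ is, for each fixed $k\le\sigma$, a bounded positive constant (from boundedness of $\widehat{\bsgamma}$ — if $\Gamma_k = 0$ that size simply contributes nothing), so scaling by it leaves the decay exponent unchanged; (5) observe $\bsgamma^{(\sigma)}$'s sorted weight sequence is obtained by merging the $\le\sigma$ sequences from steps (3)–(4), and the merge of finitely many sequences of decay exponent $p$ has decay exponent $p$ (an elementary fact: if each of $r$ sequences satisfies $a^{(i)}_m m^{p'}\to 0$ then so does the merge, since the merged index at position $M$ corresponds to index $\ge M/r$ in at least one component); (6) conclude $\decay_{\bsgamma,\sigma} \ge p = \decay_{\bsgamma,1}$. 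Finally I would remark that everywhere only boundedness of $\widehat{\bsgamma}$ and "$0$ is the only accumulation point" were used (the latter guarantees the sorted sequence $\widehat\gamma_{u_j(\sigma)}^{(\sigma)} \to 0$ so the $\decay$ quantities are well-defined and the counting arguments are valid), so the weaker hypothesis suffices.

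The main obstacle I anticipate is step (3): getting clean two-sided bounds on the $m$-th largest value of $\prod_{j\in u}\beta_j$ over $|u| = k$ in terms of the singleton sequence $(\beta_j)$. The upper bound is routine (any such $u$ with product among the top $m$ must contain an element with index $O(m^{1/k})$... actually this needs care because the product could be large with one small-index and $k-1$ large-index factors). The cleaner route is: the number of sets $u$ with $|u|=k$ and $\prod_{j\in u}\beta_j \ge t$ is controlled by the number with $\min$ index $\le$ some threshold, and a counting/pigeonhole argument bounds this by $\binom{\text{threshold}}{k}$; inverting gives the decay. I would present this via the equivalent formulation in terms of $-\log\beta_j$ if that is cleaner, but in any case the combinatorial bookkeeping of "how many $k$-subsets have product exceeding a threshold" is where the genuine content lies; the rest is soft.
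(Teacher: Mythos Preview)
Your overall strategy is correct but much more laborious than the paper's proof, and the part you flag as ``the main obstacle'' (step~(3)) is precisely what the paper avoids entirely. The paper does not analyze the $m$-th largest $k$-fold product at all. Instead it uses the summability characterization of the decay exponent: for a nonincreasing nonnegative sequence $(a_j)$, one has $a_j = o(j^{-p})$ as soon as $\sum_j a_j^{1/p} < \infty$. Given $p < \decay_{\bsgamma,1}$, so that $\sum_j (\gamma_j C_0)^{1/p} < \infty$, the paper simply bounds
\[
\sum_{j} \widehat\gamma_{u_j(\sigma)}^{1/p}
\;\le\; \max_{\nu\le\sigma}\Gamma_\nu^{1/p}\,\sum_{|u|\le\sigma}\prod_{j\in u}(\gamma_j C_0)^{1/p}
\;\le\; \max_{\nu\le\sigma}\Gamma_\nu^{1/p}\,\prod_{j}\bigl(1+(\gamma_j C_0)^{1/p}\bigr)
\;\le\; \max_{\nu\le\sigma}\Gamma_\nu^{1/p}\,\exp\Bigl(\sum_j(\gamma_j C_0)^{1/p}\Bigr) < \infty,
\]
and immediately concludes $\decay_{\bsgamma,\sigma}\ge p$. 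This is a five-line argument with no combinatorics.

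By contrast, your route requires controlling, for each fixed $k\le\sigma$, the $m$-th largest value of $\prod_{j\in u}\beta_j$ over $|u|=k$, which amounts to counting $k$-subsets $u$ with $\prod_{j\in u} j \le M$ (after reducing to $\beta_j\asymp j^{-p'}$); this is essentially a generalized Dirichlet divisor problem and gives $\Theta\bigl(M(\log M)^{k-1}\bigr)$, from which the decay exponent can indeed be recovered. So your approach can be completed, but it trades a one-line product bound for a genuine (if classical) piece of analytic number theory. The paper's method buys simplicity and makes transparent why only boundedness of $\Gamma_\nu$ for $\nu\le\sigma$ is needed; your method buys a slightly more explicit picture of the sorted sequence itself, at the price of the combinatorial bookkeeping you anticipated.
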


\begin{proof} Let $\sigma\in \N$. Since
$\decay_{\bsgamma,1} \ge \decay_{\bsgamma,\sigma} \ge 0$,
it remains to show that $\decay_{\bsgamma,1} \le \decay_{\bsgamma,\sigma}$.
We can confine ourselves to the case $\decay_{\bsgamma,1}>0$. Let $p\in (0,\decay_{\bsgamma,1})$.
This implies $\sum_{j\in\N} \gamma^{1/p}_j < \infty$. Thus we get
\begin{equation*}
\begin{split}
&\sum_{j\in\N}\widehat{\gamma}^{1/p}_{u_j(\sigma)}
\le \max_{\nu\in [\sigma]} \Gamma^{1/p}_\nu \sum_{j\in\N}\prod_{j\in u_j(\sigma)}
(\gamma_j C_0)^{1/p}
\le \max_{\nu\in [\sigma]} \Gamma^{1/p}_\nu  \prod_{j\in\N} \big(
1 + (\gamma_j C_0)^{1/p} \big)\\
\le &\max_{\nu\in [\sigma]} \Gamma^{1/p}_\nu  \exp \bigg( \sum_{j\in\N} \ln \big(
1 + (\gamma_j C_0)^{1/p} \big) \bigg)
\le \max_{\nu\in [\sigma]} \Gamma^{1/p}_\nu  \exp \bigg( \sum_{j\in\N}
(\gamma_j C_0)^{1/p} \bigg) < \infty,
\end{split}
\end{equation*}
where we used the estimate $\ln(1+x) \le x$, which holds for all non-negative $x$.
Since the sequence $\widehat{\gamma}_{u_j(\sigma)}$, $j\in\N$, is monotonically
decreasing, this implies $\widehat{\gamma}_{u_j(\sigma)} = o(j^{-p})$.
Hence $p\le \decay_{\bsgamma,\sigma}$. Since we may choose $p$ arbitrarily close to
$\decay_{\bsgamma,1}$, we obtain $\decay_{\bsgamma,1} \le \decay_{\bsgamma,\sigma}$.
\end{proof}

For POD weights with $\decay_{\bsgamma}>1$ Lemma \ref{Lemma3.8}, and Theorem \ref{NesLowBou} and
\ref{UnrLowBou} imply strong tractability and
\begin{equation}
\label{neslowboupod}
p^{\nes} \ge \max \left\{ \frac{1}{\beta}, \frac{2 s}{\decay_{\bsgamma,1} - 1}
\right\}
\hspace{2ex}\text{and}\hspace{2ex}
p^{\unr} \ge \max \left\{ \frac{1}{\beta}\,,\,
\frac{2\min\{ 1,s \}}{\decay_{\bsgamma,1} - 1} \right\}.
\end{equation}

For product weights
the lower bound for $p^{\nes}$ can be derived from \cite[Thm.~4]{NHMR11}, and the one for $p^{\unr}$ from \cite[Thm.~3.3 \& Sect.~5.6]{KSWW10}.

Notice that the lower bounds for $p^{\nes}$ and $p^{\unr}$ for finite-product weights are not weaker than for product weights.

\subsubsection{Weights with finite algorithmic dimension}

For the special case of finite-intersection weights of order $\omega$
it was observed in \cite{Gne10}
that if $\AC(\bsgamma^{(\sigma)}) = \infty$, then $t^*_\sigma = 1$ for all $\sigma \in \N$.
Hence for finite-intersection weights the lower bounds (\ref{neslowbou}) and (\ref{unrlowbou}) result in
\begin{equation}
\label{neslowboufad}
p^{\nes} \ge \max \left\{ \frac{1}{\beta}\,,\,
\frac{2s}{\decay_{\bsgamma,\omega} - 1} \right\}
\hspace{2ex}\text{and}\hspace{2ex}
p^{\unr} \ge \max \left\{ \frac{1}{\beta}\,,\,
\frac{2\min\{ 1,s \}}{\decay_{\bsgamma,\omega} - 1} \right\}.
\end{equation}

For the Wiener kernel $K(x,y) = \min\{x,y\}$, defined on $[0,1]^2$, the lower
bound for $p^{\unr}$ in (\ref{neslowboufad}) was already proved in
\cite[Sect.~3.1.1]{Gne10}.

For general weights of finite algorithmic dimension it is however not necessarily  true that $t^*_\sigma = 1$ for all $\sigma\in\N$ as the following two lemmas show.

\begin{lemma}\label{lem_alg_dim1}
Let  $d\in\N$. Then there exists a set of weights $\bsgamma$ with algorithmic dimension $d$ such that for all $k>d$ there exists a $v \in \U$ with $|v| = k$ and
\begin{equation}
\label{vest}
|\{u \subseteq v: u \in \mathcal{\AC(\bsgamma^{(\sigma)})}\}| \ge
\left( \left\lfloor \frac{|v|}{d} \right\rfloor \right)^\sigma {d \choose \sigma} > \left(\frac{|v|}{d}-1\right)^\sigma
\hspace{2ex}\text{for all $\sigma \in [d]$.}
\end{equation}
\end{lemma}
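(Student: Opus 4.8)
The plan is to construct the weights explicitly by fixing the associated graph to be a blow-up of a complete graph on $d$ vertices and then reading off the coordinate sets. Concretely, I would partition $\N$ into $d$ infinite classes $B_1,\dots,B_d$ (say $B_r = \{ j \in \N : j \equiv r \pmod d\}$) and take the set of active coordinate sets to be
\begin{equation*}
\AC(\bsgamma) := \{ u \in \U \,|\, u\neq\emptyset \text{ and } |u\cap B_r| \le 1 \text{ for all } r\in[d]\},
\end{equation*}
i.e.\ all finite \emph{transversals} (partial transversals) of the partition. Assigning any summable positive weights $\gamma_u$ to these sets (for instance $\gamma_u = \delta^{|u|}\prod_{j\in u}j^{-q}$ with $\delta$ small and $q$ large enough to guarantee \eqref{summable}; the precise decay is irrelevant here) gives a family $\bsgamma$ whose active set is exactly $\AC(\bsgamma)$.

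First I would verify that this $\bsgamma$ has algorithmic dimension exactly $d$. The map $\phi:\N\to[d]$ sending $j\in B_r$ to $r$ is injective on every $u\in\AC(\bsgamma)$ by construction, so $d^\ast(\bsgamma)\le d$. For the lower bound, note that picking one element from each $B_r$ produces a set $u$ of size $d$ in $\AC(\bsgamma)$, so by \eqref{lowboualgdim} (the remark after Lemma~\ref{lem_graph}) we get $d^\ast(\bsgamma)\ge d$; hence equality. Since $\AC(\bsgamma)$ contains sets of size $d$ but none larger, these are finite-order weights of order exactly $d$, so for any $\sigma\ge d$ the cut-off does nothing, and for $\sigma\in[d]$ we have $\AC(\bsgamma^{(\sigma)}) = \{u\in\AC(\bsgamma) : |u|\le\sigma\}$.

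Next I would count, for $k>d$, the witnesses inside a well-chosen $v$. Take $v$ to be a union of $\lfloor k/d\rfloor$ full transversals together with enough extra points to reach $|v|=k$; the cleanest choice is $v = \bigcup_{r=1}^{d} \{ \text{the first } \lceil k/d\rceil \text{ or } \lfloor k/d\rfloor \text{ elements of } B_r\}$ chosen so the total is exactly $k$ and each $B_r$ contributes at least $\lfloor k/d\rfloor$ elements. A subset $u\subseteq v$ with $|u|=\sigma$ lies in $\AC(\bsgamma^{(\sigma)})$ precisely when it hits $\sigma$ of the $d$ classes in at most one point each: choose which $\sigma$ classes to meet in $\binom{d}{\sigma}$ ways, then choose one of the $\ge\lfloor k/d\rfloor$ available points in each chosen class, giving at least $\binom{d}{\sigma}\lfloor k/d\rfloor^\sigma$ such subsets $u$. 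This is the bound \eqref{vest}; the final strict inequality $\binom{d}{\sigma}\lfloor|v|/d\rfloor^\sigma > (|v|/d-1)^\sigma$ follows from $\binom{d}{\sigma}\ge 1$ and $\lfloor|v|/d\rfloor > |v|/d - 1$.

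The only real subtlety is bookkeeping in the construction of $v$ so that $|v|=k$ exactly while every class retains at least $\lfloor k/d\rfloor$ members — a routine adjustment (distribute the $k \bmod d$ leftover points one per class among $k\bmod d$ of the classes, which only helps the count). The genuinely essential ideas are (a) choosing the associated graph to be the complete multipartite blow-up $K_{d\times\infty}$, which pins the chromatic number — equivalently the algorithmic dimension via Lemma~\ref{lem_graph} — to $d$ while still allowing arbitrarily many active sets, and (b) observing that partial transversals are plentiful: a $v$ meeting each part in $\sim|v|/d$ points contains on the order of $|v|^\sigma$ transversals of size $\sigma$, which is what forces $t^\ast_\sigma$ to be as large as $\sigma$ rather than $1$. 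No part of the argument is technically hard; the work is in setting up the example so that both the dimension claim and the counting claim hold simultaneously.
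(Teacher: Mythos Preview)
Your proposal is correct and essentially identical to the paper's proof: both partition $\N$ into the $d$ residue classes modulo $d$ (the paper phrases this as a vertex coloring of the associated complete multipartite graph $\widetilde{G}$), take the active coordinate sets to be the partial transversals (equivalently, the cliques of $\widetilde{G}$), and count the $\binom{d}{\sigma}\lfloor k/d\rfloor^\sigma$ size-$\sigma$ transversals inside $v$. The paper simply takes $v=[k]$, which already has at least $\lfloor k/d\rfloor$ elements in each class, so your more careful balancing of $v$ is unnecessary but harmless.
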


\begin{proof}
We construct a graph $\widetilde{G}$ with vertex set $\mathbb{N}$ and chromatic number $d$ in the following way: color the vertex $j \in \mathbb{N}$ by the color $c \in [d]$ given by $c \equiv j \pmod{d}$. Now each pair of vertices $(i,j) \in \mathbb{N}^2$ is an edge of the graph $\widetilde{G}$ if and only if
$i \not\equiv j \pmod{d}$, i.e., if the coloring of the vertices $i$ and $j$ differs. Let $k>d$ and $\sigma \in [d]$ be given. Let $G$ be the subgraph of $\widetilde{G}$ with vertex set $v:= [k]$. Thus for any set $u\subset v$ that consists of $\sigma$ differently colored vertices, the corresponding subgraph is complete.
We now provide a lower bound for the number of ways a subset of $v$ having $\sigma$ differently colored vertices can be chosen. Let $r=\lfloor k/d \rfloor$.
For each color $c \in [d]$, there are at least $r$ vertices in $v$ with color $c$. There are ${d \choose \sigma}$ ways of choosing a set of $\sigma$ different colors out of the $d$ possible colors and for each color $c$ there are at least $r$ possible choices of vertices with this color $c$. Thus the number of possible choices is at least $r^\sigma {d \choose \sigma}$. Hence $G$ contains
at least $r^{\sigma}{d \choose \sigma}$ cliques of size $\sigma$. We now may
define $\bsgamma$, e.g., by $\gamma_u = \prod_{j\in u} j^{-2}$ if $u$ is a clique
in $\widetilde{G}$ and $\gamma_u =0$ else. By construction the algorithmic dimension
of $\bsgamma$ is $d$, see Lemma \ref{lem_graph}, and in addition (\ref{vest}) holds.
\end{proof}

\begin{lemma}
\label{Tsternsigma}
For each $d \in\N$ there exists a set of weights $\bsgamma$ such that $\AC(\bsgamma)$ has algorithmic dimension $d$ and such that for all
$\sigma \in \N \cup \{\infty\}$ we have
$t^\ast_\sigma = \min\{\sigma, d\}$.
\end{lemma}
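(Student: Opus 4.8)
The goal is, for each $d \in \N$, to exhibit weights $\bsgamma$ with $d^\ast(\AC(\bsgamma)) = d$ and $t^\ast_\sigma = \min\{\sigma,d\}$ for every $\sigma \in \N \cup \{\infty\}$. The plan is to reuse the construction from Lemma \ref{lem_alg_dim1}: take the graph $\widetilde{G}$ on vertex set $\N$ obtained by coloring $j$ by $j \bmod d$ and joining $i,j$ whenever they receive different colors, and set $\gamma_u = \prod_{j \in u} j^{-2}$ if $u$ is a clique of $\widetilde{G}$ and $\gamma_u = 0$ otherwise. Condition (\ref{summable}) is satisfied because $\sum_{u} \gamma_u M^{|u|} \le \prod_{j\in\N}(1 + M j^{-2}) < \infty$, and by construction (via Lemma \ref{lem_graph}, since $\chi(\widetilde{G}) = d$) the algorithmic dimension of $\bsgamma$ is exactly $d$. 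Note that the cliques of $\widetilde{G}$ are precisely the sets of pairwise differently-colored vertices, so they all have size at most $d$; hence $\AC(\bsgamma^{(\sigma)}) = \AC(\bsgamma)$ for $\sigma \ge d$, and $\bsgamma$ is a finite-order weight of order $d$, giving $t^\ast_\sigma \le \min\{\sigma,d\}$ trivially.

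\textbf{Lower bound on $t^\ast_\sigma$.} The substance is to show $t^\ast_\sigma \ge \min\{\sigma,d\}$. Write $m := \min\{\sigma,d\}$. Fix $t < m$; I must produce, for arbitrarily large constants, sets $v \in \U$ with $|\{i : u_i(\sigma) \subseteq v\}|$ not bounded by $C|v|^t$. Take $v = [k]$ for large $k$. The active coordinate sets $u$ with $|u| \le \sigma$ contained in $v$ are exactly the cliques of $\widetilde{G}$ inside $[k]$ of size at most $\sigma$; restricting to those of size exactly $m$, the counting argument of Lemma \ref{lem_alg_dim1} (each of the $d$ colors has at least $r := \lfloor k/d \rfloor$ representatives in $[k]$, choose $m$ colors and one vertex of each) gives at least $r^{m}\binom{d}{m} > (k/d - 1)^{m}$ such sets. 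Thus $|\{i : u_i(\sigma) \subseteq [k]\}| > (k/d-1)^m$. If $t^\ast_\sigma$ were $< m$, pick $t$ with $t^\ast_\sigma < t < m$; then for a constant $C_t$ we would need $(k/d-1)^m < C_t k^{t}$ for all $k$, which fails as $k \to \infty$ since $m > t$. Hence $t^\ast_\sigma \ge m$, and combined with the trivial upper bound, $t^\ast_\sigma = \min\{\sigma,d\}$.

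\textbf{Main obstacle.} The only genuinely delicate point is bookkeeping about which sets are active after the cut-off: one must confirm that passing from $\bsgamma$ to $\bsgamma^{(\sigma)}$ only removes cliques of size $> \sigma$ and leaves all cliques of size $\le \sigma$ active (which holds since every clique gets a strictly positive weight), so that the $u_i(\sigma)$ are exactly those cliques, independent of the chosen enumeration by decreasing $\widehat\gamma$. Everything else is the clique-counting estimate already carried out in Lemma \ref{lem_alg_dim1}, now read ``from below'' to force $t^\ast_\sigma$ up, together with the observation that the polynomial growth exponent $m$ in $|v|$ cannot be undercut by any $t < m$.
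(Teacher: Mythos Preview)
Your proof is correct and follows essentially the same route as the paper: you take the weights constructed in Lemma~\ref{lem_alg_dim1}, use the clique-counting estimate~(\ref{vest}) to force $t^*_\sigma \ge \min\{\sigma,d\}$, and combine this with the trivial upper bounds $t^*_\sigma \le \sigma$ and $t^*_\sigma \le d$ (the latter because all active sets have size at most $d$). You simply spell out in more detail what the paper compresses into a reference to~(\ref{vest}) and a one-line remark about the algorithmic dimension, and you additionally verify condition~(\ref{summable}) explicitly.
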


\begin{proof}
For $d\in\N$ let $\bsgamma$ be weights as in Lemma \ref{lem_alg_dim1}. Due
to (\ref{vest}) we have for all $\sigma \in \N \cup \{\infty\}$ that
$t^*_{\sigma} \ge \min\{\sigma,d\}$. Since the algorithmic dimension of
$\bsgamma$ is $d$, we have additionally that $t^*_{\sigma} \le d$.
Since always $t^*_{\sigma} \le \sigma$, the statement of the lemma is valid.
\end{proof}

For general weights with finite algorithmic dimension we
just know that the values $\decay_{\bsgamma,1},\ldots,\decay_{\bsgamma,\omega}$
satisfy the relation $\decay_{\bsgamma,1} \ge \ldots \ge \decay_{\bsgamma,\omega}$. We can, e.g., easily construct weights of finite algorithmic dimension
whose set of active coordinate sets $\AC(\bsgamma)$ consists only of sets
of size at least $\sigma \in \{2,\ldots,\omega\}$. Thus $\decay_{\bsgamma,1}
= \ldots = \decay_{\bsgamma, \sigma-1} = \infty$, but $\decay_{\bsgamma, \sigma}$
may be either finite or infinite. Together with Lemma \ref{Tsternsigma} this
argument shows that for general weights with finite algorithmic dimension
we should use the general form of the bounds (\ref{neslowbou}) and (\ref{unrlowbou})
to fully exploit the specific features of the weights we are working with.

\section{Upper bounds}
\label{UB}

Here we provide constructive upper bounds on the exponents of tractability
in the nested and in the unrestricted subspace sampling model. To this purpose
we consider two types of algorithms: \emph{multilevel algorithms}, which perform well
in the nested subspace sampling model, and \emph{changing dimension algorithms},
which are well suited for the unrestricted subspace sampling model.

\subsection{Multilevel algorithms}
\label{MLA}

Let us describe the
general form of the algorithms we want to use more precisely:

Let $L_0:=0$, and let $L_1<L_2<L_3 <\ldots$ be natural numbers,
and let
\begin{equation}
\label{vk12}
v^{(1)}_k := \cup_{j\in [L_k]} u_j
\hspace{2ex}\text{and}\hspace{2ex}
v^{(2)}_k := [L_k]
\hspace{2ex}\text{for $k\in\N$}.
\end{equation}
In the general case we will use the sets
$v^{(1)}_k$, $k=1,\ldots,m$.
In the special cases of POD weights,
it is more convenient to
make use of the relatively simple
ordering of the corresponding set system $u_j$, $j\in\N$,
and choose
the sets $v^{(2)}_k$ for $k=1,\ldots,m$.
In all definitions and results that hold for both choices
of the $v_k^{(i)}$, $i=1,2$, we simply write $v_k$, and we put $v_0:=\emptyset$.
We will choose the numbers $L_1,L_2,\ldots$ in general such that
$|v_k| = \Theta(a^{k})$ for some $a\in (1,\infty)$. (A default choice
would be $a=2$.) Let
\begin{equation*}
V_k := \{j\in\N \,|\,  u_j\subseteq v_k
\hspace{1ex}\text{and}\hspace{1ex} u_j\not\subseteq v_{k-1}\}
\hspace{2ex}\text{for $k\ge 1$.}
\end{equation*}
Let us furthermore define
$$
U(m) := \cup_{k=1}^m V_k \cup \{0\}.
$$

For $u\in \U$ we define the mapping $\Psi_{u}:\Hg\to \Hg$ by
\begin{equation*}
(\Psi_{u}f)(\bsx) = f(\bsx_u;\bsc)
\hspace{2ex}\text{for all $\bsx\in D^\N$.}
\end{equation*}

We put
\begin{equation}
\label{algobaustein}
Q_{v_k}(f)
:= \sum_{j=1}^{n_k} a_j^{(k)}
f(\bst^{(j,k)}_{v_{k}};{\bf c}), \hspace{2ex}\text{and}\hspace{2ex} \widehat{Q}_{k}(f) := Q_{v_k}(f - \Psi_{v_{k-1}}f),
\end{equation}
where the numbers $n_1\ge n_2 \ge \ldots \ge n_m$, the coefficients $a_j^{(k)}$,
and the points
$\bst^{(1,k)}_{v_{k}},
\ldots,\bst^{(n_k,k)}_{v_{k}}\in [0,1]^{v_k}$
will be chosen later, depending on the weights $\bsgamma$.

Define the \emph{multilevel algorithm} $Q^{\ML}_m$ via
\begin{equation}
\label{multilevel-algo}
Q^{\ML}_m(f)
:= f(\bsc) + \sum_{k=1}^m \widehat{Q}_{k}(f)
= f(\bsc) + \sum_{k=1}^m \sum_{j=1}^{n_k} a_j^{(k)}
(f - \Psi_{v_{k-1}}f) (\bst^{(k,j)}_{v_{k}};{\bf c}).
\end{equation}

If we choose the nested sequence of coordinate sets $v_1 \subset v_2
\subset v_3 \subset \ldots$ in the nested subspace sampling model,
then the cost of the multilevel algorithm $Q^{\ML}_m$ satisfies
\begin{equation}
\label{costML}
\cost_{\nes}(Q^{\ML}_m) \le \$(0) + 2 \sum^m_{k=1} n_k \$(|v_k|),
\end{equation}
and the same cost bound is valid in the more generous unrestricted
subspace sampling model.
From (\ref{worid}) we obtain
\begin{equation*}
[e(Q^{\ML}_m;\Hg)]^2 = \sum_{j\in\N_0}
\gamma_{u_j} [e((Q^{\ML}_{m})_{u_j};H_{u_j})]^2,
\end{equation*}
where $(Q^{\ML}_{m})_{u_j} = Q^{\ML}_{m} \circ P_{u_j} = \sum^m_{k=1}
(\widehat{Q}_{k})_{u_j}$. Note that
$e((Q^{\ML}_{m})_{u_0};H_{u_0}) =
e((Q^{\ML}_{m})_{\emptyset};H_{\emptyset}) = 0$,
since $Q^{\ML}_m$ is exact on constant functions. Notice furthermore that
we have $(\widehat{Q}_{k})_{u_j}(f) = 0$ whenever $j\notin V_k$, and
$(\widehat{Q}_{k})_{u_j}(f) = (Q_{v_k})_{u_j}(f) = Q_{v_k}(f_{u_j})$
if $j\in V_k$. Thus we get
\begin{equation}
\label{worMLid}
[e(Q^{\ML}_m;\Hg)]^2 = \sum^m_{k=1} \sum_{j\in V_k}
\gamma_{u_j} [e((Q_{v_k})_{u_j};H_{u_j})]^2
+ \sum_{j\notin U(m)} \widehat{\gamma}_{u_j}.
\end{equation}
Let us now for simplicity assume that $v_k = [\max v_k]$ for all $k\in\N$, which
is always possible by simply renumbering the variables recursively.
Helpful for the construction of good multilevel algorithms for
higher order convergence and general weights is a result of the following kind:

There exists an $\alpha \ge 1/2$ such that for each $k\in \N$ and each $n_k\in\N$
we find a quadrature $Q_{v_k}$ as in (\ref{algobaustein})
which satisfies in the case $\alpha = 1/2$ for $\tau = 1/2$, and in the case
$\alpha >1/2$ for $\tau \in [1/2, \min\{\alpha, \decay_{\bsgamma}/2\})$, $\tau$
arbitrarily close to $\min\{\alpha, \decay_{\bsgamma}/2\}$, the bound
\begin{equation}
\label{assumption3.9}
\sum_{\ell \in u \subseteq [\ell]} \gamma_{u} \left[ e \left( \left(
Q_{v_k} \right)_{u}; H_{u} \right) \right]^2
\le \widehat{C}_{\ell,\tau,\gamma} n_k^{-2\tau}
\hspace{3ex}\text{for all $\ell \in v_k\setminus v_{k-1}$,}
\end{equation}
where
\begin{equation}
\label{cktaugamma}
\widehat{C}_{\ell,\tau,\gamma} = \left( \sum_{\ell \in u \subseteq [\ell]} \gamma^{1/(2\tau)}_{u}
C^{|u|}_\tau \right)^{2\tau}
\hspace{3ex}\text{for some $C_\tau$ independent of $k$.}
\end{equation}

For many reproducing kernels $K$ quadratures like this can be constructed
as quasi-Monte Carlo quadratures. Examples are (shifted) rank-$1$
lattice rules or polynomial lattice rules constructed with the
help of a component-by-component algorithm, see Section \ref{NUMINT}
or, e.g., \cite[Theorem~8]{K}, \cite[Corollary~5.4]{DKPS}, \cite[Prop.~3.9]{Gne10}.

If we use algorithms $Q_{v_k}$ that satisfy condition (\ref{assumption3.9}) to define $\widehat{Q}_k$ as in (\ref{algobaustein}),
then we obtain from (\ref{worMLid})
\begin{equation}
\label{errorestimate}
[e(Q^{\ML}_m;\Hg)]^2 \le \sum^m_{k=1} C_{k,\tau,\gamma} n_k^{-2\tau}
+ \sum_{j\notin U(m)} \widehat{\gamma}_{u_j},
\end{equation}
where
\begin{equation}
\label{c_k_tau_gamma}
C_{k,\tau,\gamma} = \sum_{\ell \in v_k\setminus v_{k-1}} \widehat{C}_{\ell,\tau,\gamma}.
\end{equation}
The aim is to minimize the right hand side of this error bound for
given cost by choosing $\tau$, $m$, and $n_1,\ldots,n_m$
(nearly) optimal. To this purpose one needs a good estimate for the
constants $C_{k,\tau,\gamma}$ and for the tail
$\sum_{j\notin U(m)} \widehat{\gamma}_{u_j}$, i.e., more specific
information about the weights.

\subsection{Changing dimension algorithms}
\label{CDA}

For given weights $\bsgamma$ let $\mathcal{A}_0$ be a finite subset
of $\mathcal{A}(\bsgamma)$.
A \emph{changing dimension algorithm} $Q^{\CD}$ is an algorithm of the
form
\begin{equation}
\label{CD-algo}
Q^{\CD}(f) = \sum_{u \in \mathcal{A}_0} Q_{n_u,u}(f_u),
\end{equation}
where the integrand $f \in \Hg$ has the uniquely determined anchored decomposition
\begin{equation*}
f(\bsx) = \sum_{u\in\mathcal{A}} f_u(\bsx)
\end{equation*}
and $Q_{n_u,u}$ is a quadrature rule for approximating $I_u(f_u)$.
If the building blocks $Q_{n_u,u}$ are linear algorithms, then also
$Q^{\CD}$ is linear; this follows from the explicit formula
\begin{equation*}
f_u(\bsx) = \sum_{v \subseteq u} (-1)^{|u\setminus v|} f(\bsx_v;\bsc)
\end{equation*}
for arbitrary $u\in\AC$, see  \cite{KSWW10a}.
Thus a function evaluation $f_u(\bsx)$ can be done at cost bounded by
$|\{v\in\AC \,|\, v\subseteq u\}| \$(|u|) \le 2^{|u|}\$(|u|)$.
Changing dimension algorithms for infinite-dimension\-al integration were
introduced in \cite{KSWW10}. For POD weights we use a slight modification
of the changing dimension algorithms presented in \cite{PW11} and for weights with
finite active dimension we employ the changing dimension algorithms from \cite[Sect.~4]{KSWW10}.

\subsection{Product and order-dependent weights}
\label{UB_POD}

We consider now product and order-dependent weights (POD) weights, where for each $u \in\U$ we have
\begin{equation*}
\gamma_u = \Gamma_{|u|} \prod_{j \in u} \gamma_j,
\end{equation*}
where $(\Gamma_{|u|})_{u \in \U}$ and $(\gamma_j)_{j\in \mathbb{N}}$ are sequences of nonnegative real numbers as in (\ref{pod}). (Note to distinguish between $\gamma_u$, where $u \in \U$ is a finite set of positive integers, and $\gamma_d, \gamma_j$, where $d, j \in \N$ are positive integers.
)

Before we present the concrete algorithms that we use to obtain upper
bounds for the exponents of tractability $p^{\nes}$ and $p^{\unr}$,
we provide some useful results on POD weights.

\begin{lemma}\label{lem_pod_example}
Let $p^\ast \ge 2q^\ast \ge 2$ such that $p^\ast/(2q^\ast) \in \mathbb{N}$. For the POD weights determined by $\gamma_j = j^{-p^*}$ for $j\in\N$, $\Gamma_0=1=\Gamma_1$, and
\begin{equation*}
\Gamma_k= (k!)^{p^\ast} k^{p^\ast/2-q^\ast} \left(\frac{(p^\ast/q^\ast) \sin ( q^\ast \pi/ p^\ast )}{\pi} \right)^{k p^\ast}
\hspace{2ex}\text{for $k\ge 2$,}
\end{equation*}
we have
\begin{equation*}
\mathrm{decay}_{\bsgamma,\infty} = q^\ast \quad \mbox{ and } \quad \mathrm{decay}_{\bsgamma,\sigma} = p^\ast \quad \mbox{for all } \sigma \in \mathbb{N}.
\end{equation*}
\end{lemma}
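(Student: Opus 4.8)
The plan is to compute the two decay quantities directly from their definitions by identifying the ordering of the (hatted) weights. Recall that $\decay_{\bsgamma,\sigma}$ is governed, up to the factor $C_0^{|u|}$ from passing to $\widehat{\bsgamma}$, by how fast the sorted weights decrease. For $\sigma\in\N$, only sets of size at most $\sigma$ enter $\bsgamma^{(\sigma)}$; in fact the singletons $\{j\}$, which carry weight $\gamma_{\{j\}}=\Gamma_1\gamma_j=j^{-p^\ast}$, will dominate the tail, and the strategy is to show that the contribution of all sets of size $\ge 2$ is negligible in the sense that it does not change the polynomial decay exponent. Concretely, I would first show $\decay_{\bsgamma,1}=p^\ast$: this is immediate since for $\sigma=1$ the only active sets are singletons with $\widehat{\gamma}_{\{j\}}=\gamma_j C_0 = C_0 j^{-p^\ast}$, so sorting is trivial and $\widehat{\gamma}_{u_j(1)}^{(1)}=\Theta(j^{-p^\ast})$, giving $\decay_{\bsgamma,1}=p^\ast$.

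Next, to handle $\sigma\ge 2$ (and hence all $\sigma\in\N$ by monotonicity $\decay_{\bsgamma,1}\ge\decay_{\bsgamma,\sigma}$), I would bound the number of sets $u$ with $|u|\ge 2$ and $\widehat\gamma_u$ above a given threshold, or equivalently bound the tail sum $\sum_{|u|\ge 2}\widehat\gamma_u^{1/p}$ for $p<p^\ast$. Using the POD structure, $\sum_{|u|\ge 2}\gamma_u^{1/p} = \sum_{k\ge 2}\Gamma_k^{1/p}\sum_{|u|=k}\prod_{j\in u}\gamma_j^{1/p} \le \sum_{k\ge 2}\Gamma_k^{1/p}\frac{1}{k!}\bigl(\sum_{j\in\N}\gamma_j^{1/p}\bigr)^k$. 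With $\gamma_j=j^{-p^\ast}$ one has $\sum_j\gamma_j^{1/p}=\zeta(p^\ast/p)<\infty$ precisely when $p<p^\ast$, and the specific choice of $\Gamma_k$ is engineered so that $\Gamma_k^{1/p}/k!$ times $\zeta(p^\ast/p)^k$ stays summable: indeed $\Gamma_k^{1/p}=(k!)^{p^\ast/p}k^{(p^\ast/2-q^\ast)/p}\bigl((p^\ast/q^\ast)\sin(q^\ast\pi/p^\ast)/\pi\bigr)^{kp^\ast/p}$, and the hypothesis $p^\ast/(2q^\ast)\in\N$ together with the reflection-formula-flavoured constant make the series converge exactly when $p\le p^\ast$ and diverge for larger $p$ in the regime relevant to $\sigma=\infty$. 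For finite $\sigma$ the sum is truncated at $k=\sigma$, so it is a finite sum of convergent series and thus finite for every $p<p^\ast$; since the sorted sequence is monotone decreasing this yields $\widehat\gamma_{u_j(\sigma)}^{(\sigma)}=o(j^{-p})$ for all $p<p^\ast$, hence $\decay_{\bsgamma,\sigma}\ge p^\ast$, and combined with $\decay_{\bsgamma,\sigma}\le\decay_{\bsgamma,1}=p^\ast$ we get equality.

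For the case $\sigma=\infty$, i.e.\ $\decay_{\bsgamma,\infty}=q^\ast$, the point is that now the full series $\sum_{k\ge 2}\Gamma_k^{1/p}\frac{1}{k!}\zeta(p^\ast/p)^k$ must be controlled, and the constant $c:=(p^\ast/q^\ast)\sin(q^\ast\pi/p^\ast)/\pi$ enters through a sharp asymptotic. The identity to exploit is that $\zeta(p^\ast/p)$ blows up like $\frac{p}{p^\ast-p}$ as $p\uparrow p^\ast$, while $\Gamma_k^{1/p}/k!\sim (k!)^{p^\ast/p-1}k^{(p^\ast/2-q^\ast)/p}c^{kp^\ast/p}$ grows super-geometrically for $p<p^\ast$ but exactly geometrically at $p=p^\ast$, namely $\Gamma_k^{1/p^\ast}/k!= k^{1/2-q^\ast/p^\ast}c^{k}$. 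Then $\sum_k k^{1/2-q^\ast/p^\ast}(c\,\zeta(q^\ast))^k$ — wait, more carefully one substitutes the value of $\zeta$ at the relevant point; the constant $c$ is precisely $1/\zeta(\,\cdot\,)$ evaluated so that $c\cdot(\text{the }\zeta\text{ factor})=1$, making the geometric ratio equal to $1$ and leaving a power series $\sum_k k^{1/2-q^\ast/p^\ast}$, which converges iff $q^\ast/p^\ast-1/2>1$, i.e.\ iff $p^\ast/(2q^\ast)<1$ — so the boundary is tuned to the decay exponent $q^\ast$. I would make this precise by showing $\sum_{u\in\U}\widehat\gamma_u^{1/p}<\infty$ for $p<q^\ast$ and $=\infty$ for $p>q^\ast$, using the explicit constant to compute the critical exponent, and then as before deduce $\decay_{\bsgamma,\infty}=q^\ast$ from monotonicity of the sorted sequence. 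The main obstacle is this last asymptotic matching: pinning down the exact convergence threshold of the double series requires Stirling's formula for $(k!)^{p^\ast/p-1}$, the precise behaviour of $\zeta(p^\ast/p)$ near its pole, and verifying that the chosen constant $c$ exactly cancels the geometric growth — this is where the hypothesis $p^\ast/(2q^\ast)\in\N$ and the specific trigonometric constant are used, and getting the direction of the inequality right at the endpoint is delicate.
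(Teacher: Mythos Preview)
Your argument for $\decay_{\bsgamma,\sigma}=p^\ast$ with $\sigma\in\N$ is fine and matches the paper (which in fact proves $\decay_{\bsgamma,1}=\decay_{\bsgamma,\sigma}$ for arbitrary POD weights in a separate lemma). The gap is in the case $\sigma=\infty$, and it is not merely an endpoint delicacy: the estimate you use is the wrong one.

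You bound $\sum_{|u|=k}\prod_{j\in u}\gamma_j^{1/p}\le \tfrac{1}{k!}\bigl(\sum_j\gamma_j^{1/p}\bigr)^k=\zeta(p^\ast/p)^k/k!$. With $\gamma_j=j^{-p^\ast}$ the left side is the multiple zeta value $\zeta(r,\ldots,r)$ ($k$ arguments, $r=p^\ast/p$), and for even $r$ one has the sharp asymptotic
\[
(k!)^r\,\zeta(r,\ldots,r)\ \asymp_k\ k^{-r/2}\Bigl(\tfrac{\pi}{r\sin(\pi/r)}\Bigr)^{rk},
\]
so $\zeta(r,\ldots,r)$ decays like $(k!)^{-r}$, not $(k!)^{-1}$. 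The choice $\Gamma_k=(k!)^{p^\ast}k^{p^\ast/2-q^\ast}c^{kp^\ast}$ with $c=\tfrac{(p^\ast/q^\ast)\sin(q^\ast\pi/p^\ast)}{\pi}$ is engineered to match \emph{this} asymptotic: at $q=q^\ast$ the factor $(k!)^{p^\ast/q^\ast}$ in $\Gamma_k^{1/q^\ast}$ cancels $(k!)^{-p^\ast/q^\ast}$ from the multiple zeta value, $c$ is the reciprocal of $\pi/(r\sin(\pi/r))$ at $r=p^\ast/q^\ast$, and the residual power of $k$ is $k^{-1}$, yielding a divergent harmonic series and hence $\decay_{\bsgamma,\infty}\le q^\ast$; for $q<q^\ast$ the residual is $k^{-q^\ast/q}$, which converges and gives $\decay_{\bsgamma,\infty}\ge q^\ast$. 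If instead you insert your crude bound, the series $\sum_k\Gamma_k^{1/p}\zeta(p^\ast/p)^k/k!$ carries a factor $(k!)^{p^\ast/p-1}$ with $p^\ast/p>1$, which grows super-exponentially and dominates every exponential term, so the series diverges for \emph{all} $p<p^\ast$ and you cannot deduce $\decay_{\bsgamma,\infty}\ge q^\ast$. Furthermore, for the inequality $\decay_{\bsgamma,\infty}\le q^\ast$ you need a \emph{lower} bound on $\sum_u\gamma_u^{1/q}$, which an upper bound on $\zeta(r,\ldots,r)$ cannot provide. The hypothesis $p^\ast/(2q^\ast)\in\N$ is there precisely so that $r=p^\ast/q^\ast$ is an even integer and the explicit multiple zeta formula applies; it has nothing to do with the behaviour of the ordinary $\zeta(p^\ast/p)$ near its pole.
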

A rigorous proof of Lemma \ref{lem_pod_example} can be found in
Section \ref{APPENDIX}.
We suspect that the condition $p^\ast/(2q^\ast) \in \mathbb{N}$ in the above lemma is not necessary. If the condition $q \le p^\ast/2$ can be replaced by $q \le p^\ast$ in Corollary~\ref{cor_pod_criteria} in Section \ref{APPENDIX}, then the condition $p^\ast \ge 2 q^\ast$ can be replaced by $p^\ast \ge q^\ast$ in the above lemma.

Lemma~\ref{lem_pod_example} considers the boundary case where for given product weights $\gamma_j$, the $\Gamma_k$ are made as large as possible such that the POD weights still have finite decay. This allows us to obtain cases where the decay of the POD weights differs from the decay of the corresponding product weights, cf. also Lemma \ref{Lemma3.8}. In the following theorem we consider POD weights where $\Gamma_k$ is smaller such that the decay of the POD weights is always the same as the decay of the corresponding product weights.

\begin{theorem}\label{Corollary2}
Let $\bsgamma = (\gamma_u)_{u\in \U}$ be POD weights with $\gamma_u
= \Gamma_{|u|}\prod_{j\in u} \gamma_j$. Let $p^*:= \decay_{\bsgamma,1} >1$
and $q\le p^*$. Let there exist a constant $C_q>0$ such that
$\Gamma_k \le C_q (k!)^q$ for all $k\in\N$.
In the case where $q=p^*$, we additionally assume
$\sum^\infty_{j=1} \gamma_j^{1/p^*} <1$.
Then we get the following results:

If $p^*=q$, then
\begin{equation*}
 \sum_{d\in u\subseteq [d]} \gamma_u^{1/p^*} = \Theta(\gamma_d^{1/p*}).
\end{equation*}

If $p^* > q$, then
\begin{equation*}
 \sum_{d\in u\subseteq [d]} \gamma_u^{1/p} = \Theta(\gamma_d^{1/p})
\hspace{2ex}\text{for all $p\in (q,p^*)$.}
\end{equation*}
The last identity holds also for $p = p^*$ if $\sum^\infty_{j=1} \gamma_j^{1/p^*} <\infty$.

In particular, our assumptions lead for all $q\le p^*$ to
$\decay_{\bsgamma,\infty} = \decay_{\bsgamma,1}$.
\end{theorem}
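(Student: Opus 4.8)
The plan is to fix $p$ in the relevant range (namely $p = p^*$ in the case $q = p^*$, and $p \in (q, p^*)$ otherwise, extending to $p = p^*$ when the product weights are summable to the $1/p^*$ power) and to estimate the sum $S_d := \sum_{d \in u \subseteq [d]} \gamma_u^{1/p}$ from above and below. Since $\gamma_u = \Gamma_{|u|} \prod_{j \in u} \gamma_j$, grouping the sets $u$ by their cardinality $k = |u|$ and recalling that every such $u$ must contain $d$ gives
\begin{equation*}
S_d = \sum_{k=1}^{d} \Gamma_k^{1/p} \gamma_d^{1/p} \sum_{\substack{w \subseteq [d-1] \\ |w| = k-1}} \prod_{j \in w} \gamma_j^{1/p}.
\end{equation*}
The lower bound $S_d = \Omega(\gamma_d^{1/p})$ is immediate by keeping only the term $k = 1$ (i.e. $u = \{d\}$), using $\Gamma_1 = 1$. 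So the real content is the matching upper bound $S_d = O(\gamma_d^{1/p})$, i.e. I must show that the sum over $k \ge 2$ of the inner symmetric sums, weighted by $\Gamma_k^{1/p}$, is bounded by a constant independent of $d$.

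First I would bound the inner sum over $(k-1)$-subsets of $[d-1]$ by the full $(k-1)$-th elementary symmetric polynomial in $(\gamma_j^{1/p})_{j \ge 1}$, and then crudely by $\frac{1}{(k-1)!}\big(\sum_{j \ge 1} \gamma_j^{1/p}\big)^{k-1}$. Here I use the hypothesis $p^* := \decay_{\bsgamma,1} > 1$: when $p < p^*$ the series $\sum_j \gamma_j^{1/p}$ converges (since $\gamma_j = O(j^{-p'})$ for every $p' < p^*$, and one can pick $p' > p$), and when $p = p^*$ convergence is exactly the extra hypothesis $\sum_j \gamma_j^{1/p^*} < \infty$ (strengthened to $< 1$ in the boundary case $q = p^*$). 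Call this sum $A$. Next, using $\Gamma_k \le C_q (k!)^q$, we get $\Gamma_k^{1/p} \le C_q^{1/p} (k!)^{q/p}$, so the tail is dominated by
\begin{equation*}
\gamma_d^{1/p} \, C_q^{1/p} \sum_{k \ge 1} \frac{(k!)^{q/p}}{(k-1)!} A^{k-1}.
\end{equation*}
When $q < p$ we have $q/p < 1$, and $(k!)^{q/p}/(k-1)! = k^{q/p}(k!)^{q/p - 1} \cdot ((k-1)!)^{?}$ — more carefully, $(k!)^{q/p}/(k-1)! = k^{q/p} ((k-1)!)^{q/p - 1} \le k^{q/p}$ since the exponent $q/p - 1$ is negative and $(k-1)! \ge 1$; hence the series $\sum_k k^{q/p} A^{k-1}$ converges for any finite $A$, giving the desired uniform bound. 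In the boundary case $q = p$ we instead have $(k!)^{q/p}/(k-1)! = k$, so the series becomes $\sum_k k A^{k-1}$, which converges precisely because we assumed $A = \sum_j \gamma_j^{1/p^*} < 1$. In both cases the constant is independent of $d$, establishing $S_d = \Theta(\gamma_d^{1/p})$.

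The main obstacle I anticipate is making the combinatorial/factorial bookkeeping tight enough in the boundary case $q = p^*$: a wasteful estimate of the elementary symmetric polynomial (e.g. replacing it by $A^{k-1}$ without the $1/(k-1)!$) would destroy convergence there, so one has to retain the factorial saving and pair it carefully against the growth $(k!)^q$ allowed in $\Gamma_k$ — this is exactly why the hypothesis is $\Gamma_k \le C_q (k!)^q$ rather than something larger, and why $\sum_j \gamma_j^{1/p^*} < 1$ (not merely $< \infty$) is needed. Finally, the concluding claim $\decay_{\bsgamma,\infty} = \decay_{\bsgamma,1}$ follows by a standard argument: from $S_d = O(\gamma_d^{1/p})$ and $\sum_d \gamma_d^{1/p} < \infty$ one gets $\sum_{u \in \U} \gamma_u^{1/p} = \sum_d S_d < \infty$ for every $p < p^* = \decay_{\bsgamma,1}$ (and also for $p = p^*$ under the summability hypothesis), so the monotone rearrangement $\widehat{\gamma}_{u_j}$ satisfies $\widehat{\gamma}_{u_j} = o(j^{-p})$, whence $\decay_{\bsgamma,\infty} \ge p$; letting $p \uparrow p^*$ and using the trivial inequality $\decay_{\bsgamma,\infty} \le \decay_{\bsgamma,1}$ yields equality. (The factor $C_0^{|u|}$ relating $\gamma_u$ and $\widehat{\gamma}_u$ only contributes a geometric factor that is absorbed into $A$ as in the proof of Lemma~\ref{Lemma3.8}, so it does not affect the decay.)
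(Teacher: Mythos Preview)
Your strategy is sound, and in the boundary case $q = p^*$ it matches the paper's: the paper uses the multinomial identity and a telescoping geometric series to reach the bound $C_{p^*}^{1/p^*}(1-T)^{-2}\gamma_d^{1/p^*}$, which is exactly the closed form of your $\sum_{k\ge 1} k A^{k-1}$ with $A=T<1$. There is, however, a genuine slip in your treatment of the case $q<p$. You correctly write $(k!)^{q/p}/(k-1)! = k^{q/p}\,((k-1)!)^{q/p-1}$ but then discard the second factor and assert that $\sum_k k^{q/p} A^{k-1}$ converges for any finite $A$. This is false: that power series has radius of convergence $1$, and nothing in the hypotheses forces $A=\sum_j \gamma_j^{1/p}<1$ when $p<p^*$. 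The fix is simply not to throw the factorial away: since $q/p<1$, the factor $((k-1)!)^{q/p-1}$ decays super-exponentially, so $\sum_k k^{q/p}((k-1)!)^{q/p-1} A^{k-1}$ does converge for every finite $A$, and your argument then goes through unchanged.

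For comparison, the paper avoids this direct estimate in the case $q<p^*$ by a rescaling trick: it sets $\gamma_j^*:=\gamma_j/S$ with $S:=\bigl(2\sum_j\gamma_j^{1/p}\bigr)^p$, so that $\sum_j(\gamma_j^*)^{1/p}=\tfrac12<1$, and absorbs the scaling into $\Gamma_k^*:=S^k\Gamma_k$. Because $p>q$, the super-exponential growth of $(k!)^{p-q}$ dominates the geometric factor $S^k$, yielding $\Gamma_k^*\le C^*(k!)^p$, and one is back in the boundary case already settled. Your direct route (once corrected) is a bit more transparent about where the factorial saving is spent; the paper's rescaling hides it inside the redefinition of $\Gamma_k$ but neatly reduces everything to a single case.
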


In the proof we use the multi-index notation, which we recall here:
For $\bsnu=(\nu_j)^d_{j=1} \in \NN_0^d$ we write $|\bsnu| := \nu_1 + \cdots
+\nu_d$ and $\bsnu !:= \prod^d_{j=1} \nu_j !$.

\begin{proof}
 Obviously, we always have
\begin{equation*}
\gamma_d^{1/p} = \Gamma_1 \gamma_d^{1/p}
\le \sum_{d\in u\subseteq [d]} \gamma_u^{1/p}
\end{equation*}
and $\decay_{\bsgamma,\infty} \le \decay_{\bsgamma,1}$.

Now let us consider the case where $q=p^*$ and $T:= \sum^\infty_{j=1} \gamma_j^{1/p^*} <1$. Then
\begin{equation*}
 \sum_{d\in u\subseteq [d]} \gamma_u^{1/p^*} = \sum_{d\in u\subseteq [d]} \Gamma_{|u|}^{1/p^*}\prod_{j\in u} \gamma_j^{1/p^*}
\le C_{p^*}^{1/p^*} \sum_{d\in u\subseteq [d]} (|u|!)\prod_{j\in u} \gamma_j^{1/p^*}.
\end{equation*}
Similar as in \cite[Lemma~6.2]{KSS11} we now employ the multinomial formula
and the formula for (finite) geometric series to obtain
\begin{equation*}
\begin{split}
 \sum_{d\in u\subseteq [d]} \gamma_u^{1/p^*}
&\le C_{p^*}^{1/p^*} \sum_{\bsnu \in \N_0^d; \nu_d \neq 0} \frac{|\bsnu|!}{\bsnu !}
\prod_{j\in u} \gamma_j^{\nu_j/p^*}\\
&= C_{p^*}^{1/p^*} \sum_{\kappa =0}^\infty \left( \sum_{\bsnu \in \N_0^d; |\bsnu|
= \kappa} \frac{\kappa !}{\bsnu !} \prod_{j=1}^d \gamma_j^{\nu_j/p^*}
- \sum_{\bsnu \in \N_0^{d-1}; |\bsnu|
= \kappa} \frac{\kappa !}{\bsnu !} \prod_{j=1}^{d-1} \gamma_j^{\nu_j/p^*}
\right) \\
&= C_{p^*}^{1/p^*} \sum_{\kappa =0}^\infty \left[ \left( \sum^d_{j=1} \gamma_j^{1/p^*}
\right)^{\kappa} - \left( \sum^{d-1}_{j=1} \gamma_j^{1/p^*} \right)^{\kappa}
\right] \\
&\le C_{p^*}^{1/p^*} (1-T)^{-2} \gamma_d^{1/p^*}.
\end{split}
\end{equation*}
In particular, we showed that $\sum_{u\in \U} \gamma_u^{1/p*} < \infty$,
which implies that $\decay_{\bsgamma,\infty} \ge p^*$.

Let now $\Gamma_{k} \le C_q (k!)^{q}$ for some $q<p^*$, and let $p\in (q,p^*]$
with $\sum^\infty_{j=1} \gamma_j^{1/p}<\infty$. (Recall that this sum is
always finite if $p < p^*$.)
Let $S :=  \left( 2 \sum_{j=1}^\infty \gamma_j^{1/p} \right)^p$ and set $\gamma^\ast_j := \gamma_j/S$.
Then $\sum_{j=1}^\infty (\gamma_j^\ast)^{1/p} = 1/2 < 1$.
Set $\Gamma^\ast_{k} = S^{k} \Gamma_{k}$ for all $k\in\N_0$. Then there is a constant $C^\ast > 0$ such that $\Gamma^\ast_{k} = S^{k} \Gamma_{k} \le S^{k} C_q (k!)^{q}  \le C^\ast (k!)^p$. Thus, by the argument used in the case
$p^*=q$, we get
\begin{equation*}
 \sum_{d\in u\subseteq [d]} \gamma_u^{1/p} =
\sum_{d\in u\subseteq [d]} \left( \Gamma^*_{|u|} \prod_{j\in u} \gamma_j^* \right)^{1/p} = O(\gamma_d^{1/p}).
\end{equation*}
In particular, we showed that $\sum_{u\in \U} \gamma_u^{1/p} < \infty$
for all $p<p^*$, which implies that $\decay_{\bsgamma,\infty} \ge p^*$.
The same holds for $p=p^*$ if $\sum_{j=1}^\infty \gamma_j^{1/p} < \infty$.
\end{proof}

\begin{corollary}\label{Corollary}
Let $\bsgamma = (\gamma_u)_{u\in \U}$ be POD weights with $\gamma_u
= \Gamma_{|u|}\prod_{j\in u} \gamma_j$. Let $p^*:= \decay_{\bsgamma,1} >1$
and $q < p^*$. Let there exist a constant $C_q>0$ such that
$\Gamma_k \le C_q (k!)^q$ for all $k\in\N$.
Then we have for every $\tau\in [1,p^*)$ and every constant $\widetilde{C}_\tau>0$ that
\begin{equation*}
 \sum_{d\in u\subseteq [d]} \gamma_u^{1/\tau} \widetilde{C}_{\tau}^{|u|}
= \Theta(\gamma_d^{1/\tau}).
\end{equation*}
\end{corollary}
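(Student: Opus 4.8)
The plan is to prove the lower bound $\Omega(\gamma_d^{1/\tau})$ trivially and to devote the work to the matching upper bound; the only new feature relative to Theorem~\ref{Corollary2} is the geometrically growing factor $\widetilde C_\tau^{|u|}$, which must be absorbed into the hyper-exponential decay hidden in the hypothesis $p^*=\decay_{\bsgamma,1}>q$. For the lower bound I would keep only the summand $u=\{d\}$: since $\Gamma_1=1$ and $\widetilde C_\tau>0$ one has $\sum_{d\in u\subseteq[d]}\gamma_u^{1/\tau}\widetilde C_\tau^{|u|}\ge(\Gamma_1\gamma_d)^{1/\tau}\widetilde C_\tau=\widetilde C_\tau\,\gamma_d^{1/\tau}$. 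For the upper bound I would pull the coordinate $d$ out of every admissible $u$: writing $u=\{d\}\cup w$ with $w\subseteq[d-1]$ and $\gamma_u=\Gamma_{|w|+1}\,\gamma_d\prod_{j\in w}\gamma_j$, one obtains
\[
\sum_{d\in u\subseteq[d]}\gamma_u^{1/\tau}\widetilde C_\tau^{|u|}
=\widetilde C_\tau\,\gamma_d^{1/\tau}\sum_{k=0}^{d-1}\Gamma_{k+1}^{1/\tau}\,\widetilde C_\tau^{\,k}\,e_k\!\big(\gamma_1^{1/\tau},\dots,\gamma_{d-1}^{1/\tau}\big)
\le\widetilde C_\tau\,\gamma_d^{1/\tau}\sum_{k\ge0}\Gamma_{k+1}^{1/\tau}\,\widetilde C_\tau^{\,k}\,e_k\!\big(\gamma_1^{1/\tau},\gamma_2^{1/\tau},\dots\big),
\]
where $e_k$ is the $k$-th elementary symmetric polynomial. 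It therefore suffices to prove that the last series converges, since then $\sum_{d\in u\subseteq[d]}\gamma_u^{1/\tau}\widetilde C_\tau^{|u|}=O(\gamma_d^{1/\tau})$, which combined with the lower bound yields $\Theta(\gamma_d^{1/\tau})$.

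The crux is a sufficiently sharp bound on $e_k(\gamma_1^{1/\tau},\gamma_2^{1/\tau},\dots)$. Since the $\gamma_j$ are non-increasing, every increasing tuple $j_1<\dots<j_k$ satisfies $\gamma_{j_i}\le\gamma_i$, so for any $\epsilon\in(0,1)$ the splitting $\gamma_{j_i}^{1/\tau}=\gamma_{j_i}^{(1-\epsilon)/\tau}\gamma_{j_i}^{\epsilon/\tau}\le\gamma_i^{(1-\epsilon)/\tau}\gamma_{j_i}^{\epsilon/\tau}$, multiplied over $i$ and summed over tuples, yields
\[
e_k\big(\gamma_1^{1/\tau},\gamma_2^{1/\tau},\dots\big)\le(\gamma_1\cdots\gamma_k)^{(1-\epsilon)/\tau}\,e_k\big(\gamma_1^{\epsilon/\tau},\gamma_2^{\epsilon/\tau},\dots\big)\le(\gamma_1\cdots\gamma_k)^{(1-\epsilon)/\tau}\,\frac{1}{k!}\Big(\sum_{j\ge1}\gamma_j^{\epsilon/\tau}\Big)^{k}.
\]
From $p^*=\decay_{\bsgamma,1}$ I would extract, for any prescribed $r<p^*$, a constant $C_r$ with $\gamma_j\le C_r j^{-r}$ for all $j$, so that $\gamma_1\cdots\gamma_k\le C_r^{\,k}(k!)^{-r}$; the same fact gives $\sum_{j\ge1}\gamma_j^{\epsilon/\tau}<\infty$ as soon as $\epsilon>\tau/p^*$. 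Feeding these bounds and $\Gamma_{k+1}\le C_q((k+1)!)^{q}$ into the series reduces its convergence to the summability of $\sum_{k\ge0}(k+1)^{q/\tau}D^{\,k}(k!)^{(q-\tau-r(1-\epsilon))/\tau}$ for a suitable constant $D>0$. Choosing $r<p^*$ close to $p^*$ and $\epsilon$ slightly above $\tau/p^*$ makes $r(1-\epsilon)$ as close as one wishes to $p^*-\tau$, and since $p^*>q$ this forces the exponent of $k!$ to be negative; the factorial factor then dominates the polynomial and the geometric ones, and the series converges.

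The one real obstacle is exactly this last balance when $\tau$ is not large relative to $q$. If $q<\tau$, already the crude estimate $e_k\le\frac1{k!}(\sum_j\gamma_j^{1/\tau})^{k}$ turns the series into a geometric series times $(k!)^{q/\tau-1}$, whose factorial exponent is negative, so the $\epsilon$-refinement is unnecessary; but the statement also permits $q\ge\tau$ (for instance $\tau=1$), where that crude bound produces a divergent series and one genuinely has to convert part of the summability $\sum_j\gamma_j^{1/r}<\infty$, valid for every $r<p^*$, into extra factorial decay --- which is precisely the role of the exponent $\epsilon$ above. The remaining ingredients, namely the rearrangement isolating the variable $d$ and the one-term lower bound, are routine.
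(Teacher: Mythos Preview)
Your argument is correct, but it takes a different and considerably longer route than the paper's. The paper dispatches the upper bound in two lines: it absorbs the geometric factor into the product part of the weights by setting $\widetilde{\gamma}_j := \gamma_j\,\widetilde{C}_\tau^{\,\tau}$, notes that this leaves $\decay_{\widetilde{\bsgamma},1}=p^*$ unchanged, picks any $p\in(\max\{\tau,q\},p^*)$, and then applies Jensen's inequality together with Theorem~\ref{Corollary2}:
\[
\sum_{d\in u\subseteq [d]} \gamma_u^{1/\tau}\widetilde{C}_\tau^{|u|}
=\sum_{d\in u\subseteq [d]} \widetilde{\gamma}_u^{1/\tau}
\le\Big(\sum_{d\in u\subseteq [d]} \widetilde{\gamma}_u^{1/p}\Big)^{p/\tau}
=\Theta\big(\widetilde{\gamma}_d^{\,p/(\tau p)}\big)=\Theta\big(\gamma_d^{1/\tau}\big).
\]
So the whole corollary is a reduction to the already-proved theorem.

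Your approach instead unfolds the sum via elementary symmetric polynomials and squeezes out the required factorial decay by the $\epsilon$-splitting $\gamma_{j_i}^{1/\tau}=\gamma_{j_i}^{(1-\epsilon)/\tau}\gamma_{j_i}^{\epsilon/\tau}\le\gamma_i^{(1-\epsilon)/\tau}\gamma_{j_i}^{\epsilon/\tau}$. This is self-contained and makes the mechanism (why $q<p^*$ produces enough factorial decay to kill any geometric growth $\widetilde{C}_\tau^{|u|}$) very transparent; the paper's argument hides this inside Theorem~\ref{Corollary2} and the Jensen step. The trade-off is that you redo from scratch an estimate that the paper already has in a reusable form, and your parameter juggling ($r$ close to $p^*$, $\epsilon$ just above $\tau/r$) is more delicate than the single choice of $p$ in the paper's proof.
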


\begin{proof}
Let $\tau$ and $\widetilde{C}_\tau$ be given.
Obviously, $\sum_{d\in u\subseteq [d]} \gamma_u^{1/\tau} \widetilde{C}_{\tau}^{|u|}
= \Omega(\gamma_d^{1/\tau})$. Now let
$p\in (\max\{\tau,q\}, p^*)$. Define the POD weights $\widetilde{\bsgamma}
= \Gamma_{|u|}\prod_{j\in u} \widetilde{\gamma_j}$ by
$\widetilde{\gamma}_j = \gamma_j \widetilde{C}^\tau_\tau$. Then $p^* = \decay_{\widetilde{\bsgamma},1}$ and, due to Jensen's inequality and
Theorem \ref{Corollary2}, we obtain
\begin{equation*}
 \sum_{d\in u\subseteq [d]} \gamma_u^{1/\tau} \widetilde{C}^{|u|}_\tau
= \sum_{d\in u\subseteq [d]} \widetilde{\gamma}_u^{1/\tau}
\le \left( \sum_{d\in u\subseteq [d]} \widetilde{\gamma}_u^{1/p} \right)^{p/\tau}
= \Theta((\widetilde{\gamma}_d^{1/p})^{p/\tau}) = \Theta(\gamma_d^{1/\tau}).
\end{equation*}
\end{proof}

From Corollary \ref{Corollary} we immediately get the following useful corollary.

\begin{corollary}\label{Lemma3.15}
Let $\bsgamma$ be POD weights that satisfy the assumptions of
Corollary \ref{Corollary}, and let $v_k = v_k^{(2)} = [L_k]$
for all $k\in\N$. Let $\tau\in [1/2, \decay_{\bsgamma,1}/2)$.
Then we have for $C_{k,\tau,\gamma}$ as in (\ref{c_k_tau_gamma})
\begin{equation*}
C_{k,\tau,\gamma} = \Theta(\sigma_k),
\hspace{3ex}\text{where $\sigma_k:= \sum_{j=L_{k-1}+1}^{L_k} \gamma_j$,}
\end{equation*}
and furthermore
\begin{equation*}
 \sum_{j\notin U(m)} \widehat{\gamma}_{u_j}
= \Theta \left( \sum_{j=L_m+1}^\infty \gamma_j \right).
\end{equation*}
\end{corollary}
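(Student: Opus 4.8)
The plan is to deduce both statements from Corollary~\ref{Corollary} by elementary bookkeeping; write $p^\ast=\decay_{\bsgamma,1}$, so the hypothesis $\tau\in[1/2,\decay_{\bsgamma,1}/2)$ says exactly that $2\tau\in[1,p^\ast)$, while also $1\in[1,p^\ast)$ since $p^\ast>1$.

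For the constants $C_{k,\tau,\gamma}$: since $v_k=v_k^{(2)}=[L_k]$ we have $v_k\setminus v_{k-1}=\{L_{k-1}+1,\dots,L_k\}$, so by \eqref{c_k_tau_gamma} and \eqref{cktaugamma}
\[
C_{k,\tau,\gamma}=\sum_{\ell=L_{k-1}+1}^{L_k}\widehat{C}_{\ell,\tau,\gamma},\qquad
\widehat{C}_{\ell,\tau,\gamma}=\Bigl(\sum_{\ell\in u\subseteq[\ell]}\gamma_u^{1/(2\tau)}C_\tau^{|u|}\Bigr)^{2\tau}.
\]
I would apply Corollary~\ref{Corollary} with the exponent $2\tau\in[1,p^\ast)$ playing the role of "$\tau$" and the constant $C_\tau$ playing the role of "$\widetilde C_\tau$"; this gives $\sum_{\ell\in u\subseteq[\ell]}\gamma_u^{1/(2\tau)}C_\tau^{|u|}=\Theta(\gamma_\ell^{1/(2\tau)})$, with implied constants depending only on $\tau$, $C_\tau$ and $\bsgamma$ but not on $\ell$. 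Raising this two-sided bound to the power $2\tau$ yields $\widehat{C}_{\ell,\tau,\gamma}=\Theta(\gamma_\ell)$, again uniformly in $\ell$, and since all summands are nonnegative, summing over $\ell=L_{k-1}+1,\dots,L_k$ gives $C_{k,\tau,\gamma}=\Theta\bigl(\sum_{\ell=L_{k-1}+1}^{L_k}\gamma_\ell\bigr)=\Theta(\sigma_k)$.

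For the tail $\sum_{j\notin U(m)}\widehat{\gamma}_{u_j}$ the first step is to describe $U(m)$. Because the $v_k=[L_k]$ are nested, every non-empty $u_j$ lies in exactly one $V_k$, namely the one with the smallest $k$ such that $u_j\subseteq[L_k]$, and such a $k\le m$ exists iff $u_j\subseteq[L_m]$; together with $u_0=\emptyset$ this shows $U(m)=\{j\in\N_0:u_j\subseteq[L_m]\}$, hence $j\notin U(m)$ iff $u_j\not\subseteq[L_m]$. I would then partition the family $\{u\in\U:u\not\subseteq[L_m]\}$ according to its maximal element, using that $\max u=\ell$ is equivalent to $\ell\in u\subseteq[\ell]$, and use $\widehat{\gamma}_u=\gamma_u C_0^{|u|}$ from \eqref{gammahut} to obtain
\[
\sum_{j\notin U(m)}\widehat{\gamma}_{u_j}=\sum_{\ell>L_m}\ \sum_{\ell\in u\subseteq[\ell]}\widehat{\gamma}_u=\sum_{\ell>L_m}\ \sum_{\ell\in u\subseteq[\ell]}\gamma_u C_0^{|u|}.
\]
Applying Corollary~\ref{Corollary} once more, now with $\tau=1$ and $\widetilde C_\tau=C_0$, the inner sum is $\Theta(\gamma_\ell)$ uniformly in $\ell$; summing over $\ell>L_m$ gives $\sum_{j\notin U(m)}\widehat{\gamma}_{u_j}=\Theta\bigl(\sum_{j=L_m+1}^\infty\gamma_j\bigr)$, as claimed.

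The computation is routine once Corollary~\ref{Corollary} is available; the two points that need genuine care are (i) checking that the $\Theta$-constants coming from Corollary~\ref{Corollary} are uniform in $\ell$, so that the termwise estimates may legitimately be summed, and (ii) correctly identifying $U(m)$ and decomposing its complement by maximal element. I expect (ii) to be the main, and only mild, obstacle.
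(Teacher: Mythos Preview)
Your proof is correct and follows exactly the approach the paper intends: the paper merely states that the corollary follows immediately from Corollary~\ref{Corollary}, and your argument is precisely the routine bookkeeping that makes this explicit. Both points you flag as needing care --- the uniformity of the $\Theta$-constants in $\ell$ (which is clear from the proof of Corollary~\ref{Corollary}) and the identification of $U(m)$ via the nestedness of the $v_k$ --- are handled correctly.
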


\subsubsection{Nested subspace sampling}

Let $\bsgamma$ be POD weights that satisfy the assumptions of
Corollary \ref{Corollary}.
Let $L_k: = L\lceil a^{k-1} \rceil$ for $k\in\N$, where
$L\in\N$ and $a\in(1,\infty)$ are fixed.
(A canonical
choice would be $L=1$ and $a=2$, but in some applications other
choices may be more convenient.)
Furthermore, let $v_k = v_k^{(2)} = [L_k]$
for all $k\in\N$. Let $\alpha \ge 1/2$.
We use multilevel algorithms $Q^{\ML}_m$ as in (\ref{multilevel-algo})
that employ quadratures
$Q_{v_k}$ fulfilling the estimate (\ref{assumption3.9}).
In particular, these multilevel algorithms satisfy
the error estimate (\ref{errorestimate}).

\begin{theorem}\label{Theorem6}
Let $\$(k)=O(k^s)$ for some $s\ge 0$. Let $\bsgamma = (\gamma_u)_{u\in \U}$ be POD weights that satisfy the assumptions of
Corollary \ref{Corollary}.
We assume that there exists an $\alpha \ge 1/2$ such that for all $k\in\N$ and
all $n_k\in \N$ we find quadratures $Q_{v_k}$ as in (\ref{algobaustein})
that satisfy (\ref{assumption3.9}).  Then our multilevel algorithms $Q^{\ML}_m$,
defined as in (\ref{multilevel-algo}), establish the following result:

In the case where $s\ge (2\alpha-1)/2\alpha$ we obtain
\begin{equation}
\label{uppboupod}
p^{\nes} \le \max \left\{ \frac{1}{\alpha}, \frac{2s}{\decay_{\bsgamma,1} - 1} \right\}.
\end{equation}

In the case where $0 \le s < (2\alpha-1)/2\alpha$, we obtain for
\begin{itemize}
\item[] $\decay_{\bsgamma,1} \ge 2\alpha$:
\begin{equation*}
p^{\nes} \le \frac{1}{\alpha},
\end{equation*}
\item[] $2\alpha > \decay_{\bsgamma,1} > 1/(1-s)$:
\begin{equation*}
p^{\nes} \le \frac{2}{\decay_{\bsgamma,1}},
\end{equation*}
\item[] $1/(1-s) \ge \decay_{\bsgamma,1} >1$:
\begin{equation*}
p^{\nes} \le \frac{2s}{\decay_{\bsgamma,1}-1}.
\end{equation*}
\end{itemize}
\end{theorem}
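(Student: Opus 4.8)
The plan is to optimize the error bound \eqref{errorestimate} over the free parameters $m$ and $n_1,\dots,n_m$ subject to the cost constraint \eqref{costML}, using Corollary~\ref{Lemma3.15} to replace the two sums appearing in \eqref{errorestimate} by the explicit quantities $\sigma_k = \sum_{j=L_{k-1}+1}^{L_k}\gamma_j$ and the tail $\sum_{j=L_m+1}^\infty\gamma_j$. First I would fix $\tau \in [1/2,\decay_{\bsgamma,1}/2)$ as large as the target exponent requires ($\tau$ close to $\alpha$ when $\alpha\le\decay_{\bsgamma,1}/2$, and $\tau$ close to $\decay_{\bsgamma,1}/2$ otherwise) and record that, since $\gamma_j = \Theta(j^{-p^\ast})$ only in the boundary of the $\decay$ definition, we have $\sigma_k = O(L_{k-1}^{1-p'})$ and $\sum_{j=L_m+1}^\infty\gamma_j = O(L_m^{1-p'})$ for every $p' < \decay_{\bsgamma,1} = p^\ast$; with $L_k = L\lceil a^{k-1}\rceil = \Theta(a^k)$ this gives $\sigma_k = O(a^{-k(p'-1)})$ and the tail $= O(a^{-m(p'-1)})$. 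Also $\$(|v_k|) = O(|v_k|^s) = O(a^{ks})$, so the cost constraint reads $\sum_{k=1}^m n_k a^{ks} = O(N)$.

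The core is then a Lagrange-type balancing: choose $n_k$ so that the total error contribution $\sum_k \sigma_k n_k^{-2\tau}$ is minimized for given cost $\sum_k n_k a^{ks} \le N$. The optimal choice is $n_k \propto \bigl(\sigma_k a^{-ks}\bigr)^{1/(2\tau+1)}$ (rounded up to an integer, and truncated so the sequence stays non-increasing as required after \eqref{algobaustein}); substituting $\sigma_k = \Theta(a^{-k(p^\ast-1)})$ turns both the cost sum and the error sum into geometric series in $a^k$. Here three regimes appear according to the sign of the exponent $\tfrac{1-p^\ast}{2\tau+1} + s\cdot\tfrac{2\tau}{2\tau+1}$ (equivalently, comparing $s$ with $\tfrac{p^\ast-1}{2\tau}$): the series may be dominated by its first term, its last term, or be roughly balanced across levels. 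In each case one reads off the total sampling error as a power of $N$, then adds the tail $a^{-m(p^\ast-1)}$ and picks $m$ (i.e. $L_m = \Theta(a^m)$) so that the tail matches the sampling error. Letting $\tau\uparrow\alpha$ (resp. $\tau\uparrow\decay_{\bsgamma,1}/2$) and $p^\ast$ play its role as $\decay_{\bsgamma,1}$, these computations yield exactly the claimed bounds: the term $1/\alpha$ comes from the sampling error when $\alpha$ is the binding smoothness, the term $2s/(\decay_{\bsgamma,1}-1)$ from the regime where the geometric series is dominated by the coarsest level and the cost there is $\Theta(a^{ms})$ while the tail is $\Theta(a^{-m(\decay_{\bsgamma,1}-1)})$, and the intermediate term $2/\decay_{\bsgamma,1}$ from the balanced regime. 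The case split on $s \gtrless (2\alpha-1)/2\alpha$ records precisely when the sampling error can or cannot be pushed down to $N^{-\alpha}$ given that $s$ controls how expensive the fine levels are.

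I would organize the write-up as: (i) invoke Corollary~\ref{Lemma3.15} to rewrite \eqref{errorestimate}; (ii) fix $\tau$ and the level cutoffs, recording the $\Theta$-estimates for $\sigma_k$, the tail, and $\$(|v_k|)$; (iii) state the choice of $n_k$ and verify the cost bound \eqref{costML} is $O(N)$; (iv) evaluate the geometric sums in the three regimes to bound the sampling error; (v) choose $m$ to balance the tail; (vi) collect the resulting exponents, take $\tau$ to its supremum, and match against \eqref{uppboupod} and the three sub-cases. The main obstacle I anticipate is the bookkeeping in step (iv): getting the case analysis clean requires carefully tracking which of $s$, $1/(2\tau)$, and $(\decay_{\bsgamma,1}-1)$ controls the sign of the geometric ratio, ensuring the rounding $n_k = \lceil\cdot\rceil$ and the monotonicity constraint $n_1\ge\cdots\ge n_m$ do not change the asymptotics, and confirming that letting $\tau$ approach its supremum produces exactly the stated exponents rather than something slightly worse; the constants are harmless but the exponent arithmetic must line up precisely with the three displayed sub-cases and with the threshold $(2\alpha-1)/2\alpha$.
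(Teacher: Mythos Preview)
Your proposal is correct and follows essentially the same approach as the paper: invoke Corollary~\ref{Lemma3.15}, choose $n_k \propto (\sigma_k/\$(|v_k|))^{1/(2\tau+1)}$ by Lagrange-type balancing, reduce both cost and error to geometric-type sums in $L_k$, tie $m$ to the cost budget via $S = \Theta(L_m^s)$, and then do the case analysis before letting $\tau$ and the auxiliary exponent $p\in(1,\decay_{\bsgamma,1})$ tend to their suprema. One small wording correction: the intermediate bound $p^{\nes}\le 2/\decay_{\bsgamma,1}$ in the sub-case $2\alpha>\decay_{\bsgamma,1}>1/(1-s)$ does not arise from a ``balanced'' geometric series but from the same $S^{-2\tau}$-dominated regime as the $1/\alpha$ bound, except that here $\tau$ is capped at $p/2<\alpha$ rather than at $\alpha$.
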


If the assumptions of Theorem \ref{Theorem6} hold and if additionally
the $n$th minimal worst case error of univariate integration satisfies
$e(n; H(K)) = \Omega(n^{-\alpha})$, then, due to the lower bound on $p^{\nes}$ in
(\ref{neslowboupod}), we have  a sharp upper bound on the exponent $p^{\nes}$ if $s\ge (2\alpha-1)/2\alpha$, and for
$\decay_{\bsgamma,1} \ge 2\alpha$ and for $1/(1-s) \ge \decay_{\bsgamma,1} >1$
if  $0 \le s < (2\alpha-1)/2\alpha$.
Observe that the case $s\ge(2\alpha -1)/2\alpha$ is more interesting and relevant
than the case $0 \le s < (2\alpha-1)/2\alpha$, see, e.g., \cite{Gil08a, NHMR11, PW11}.

Notice further that Theorem \ref{Theorem6} improves on the corresponding
results in \cite{Gne10, NHMR11} for product weights. (Compare, e.g., Theorem \ref{Theorem6}
with \cite[Thm.~4.2]{Gne10} and \cite[Cor.~2]{NHMR11}, where the Wiener kernel
$K(x,y) = \min\{x,y\}$ is treated.)

\begin{proof}
Let $p \in (1,\decay_{\gamma,1})$ and let $\tau\in [1/2,\min\{\alpha, p/2\})$
satisfy (\ref{assumption3.9}).
(Here we treat in detail only the case $\alpha > 1/2$; in the easier case $\alpha =1/2$
one chooses always $\tau = 1/2$.)
Let $\sigma_k$ be as in Corollary \ref{Lemma3.15}. Then we get from (\ref{errorestimate})
and Corollary \ref{Lemma3.15} that
\begin{equation*}
 [e(Q^{\ML}_m;\Hg)]^2 = O \left( \sum^m_{k=1} \sigma_{k} n_k^{-2\tau}
+ \sum_{j= L_m +1}^\infty \gamma_{j} \right).
\end{equation*}
Let $m$ be given, and put $M:= \sum^m_{k=1}L_k^s$. For given cost $S \ge M$
of order $S= \Theta(L^s_m)$ we choose the number of sample points $n_k$ as $n_k := \lceil x_k \rceil$,
where
\begin{equation*}
 x_k = C \sigma_k^{\frac{1}{2\tau+1}} L_k^{-\frac{s}{2\tau+1}},
\hspace{3ex}\text{with}\hspace{3ex}
C= S \left( \sum^m_{k=1} \sigma_k^{\frac{1}{2\tau+1}} L_k^{\frac{2\tau s}{2\tau+1}}
\right)^{-1}.
\end{equation*}
The cost of the multilevel algorithm $Q^{\ML}_m$ is then of order
$\cost_{\nes}(Q^{\ML}_m) = O(S)$.
We get
\begin{equation*}
 \sum_{k=1}^m \sigma_k n_k^{-2\tau}
\le S^{-2\tau} \left( \sum_{k=1}^m \sigma_k^{\frac{1}{2\tau +1}}
L_k^{\frac{2\tau s}{2\tau +1}} \right)^{2\tau +1}.
\end{equation*}
Since $\sigma_k = O(L_{k-1}^{1-p})$ and $\sum_{j= L_m+1}^\infty \gamma_{j}
= O(L_m^{1-p})$, we obtain the error estimate
\begin{equation}\label{ausgangsgleichung}
 [e(Q^{\ML}_m;\Hg)]^2 = O \left( S^{-2\tau} \big(1 + L_m^{1- p +2s\tau} \big)
+ L_m^{1-p} \right) = O \left( S^{-2\tau} + S^{-\frac{p-1}{s}} \right).
\end{equation}

\emph{Case 1}: $s\ge (2\alpha -1)/2\alpha$.
Here we have two subcases.

\emph{Subcase 1a}: $p\ge 1+2\alpha s$.
This implies $(p-1)/s \ge 2\alpha$ and $p \ge 2\alpha$. Hence we obtain
\begin{equation}\label{Fall1a}
 [e(Q^{\ML}_m;\Hg)]^2 = O \left( S^{- 2\tau } \right),
\end{equation}
and we may choose $\tau$ arbitrarily close to $\alpha$.

\emph{Subcase 1b}: $1+ 2\alpha s> p >1$. Then it is not hard to verify that
$(p -1)/s \in (0,\min\{2\alpha, p\})$.
Thus we may choose $\tau \ge (p -1)/2s$ and get
\begin{equation}\label{Fall1b}
 [e(Q^{\ML}_m;\Hg)]^2 = O \left( S^{- \frac{p -1}{s} } \right).
\end{equation}
If we let $p$ tend to $\decay_{\bsgamma,1}$, we see that the estimates (\ref{Fall1a}) and (\ref{Fall1b}) imply (\ref{uppboupod}).

\emph{Case 2}: $(2\alpha -1)/2\alpha > s \ge 0$.
Here we have three subcases.

\emph{Subcase 2a}: $p\ge 2\alpha$. Then $(p -1)/s > 2\alpha$
and we get (\ref{Fall1a}), where we again can choose $\tau$ arbitrarily close
to $\alpha$.

\emph{Subcase 2b}: $2\alpha > p > 1/(1-s)$. Then $(p -1)/s > p$. Hence we get (\ref{Fall1a}) and may choose $\tau$ arbitrarily close to $p/2$.

\emph{Subcase 2c}: $1/(1-s) \ge p >1$. Then $2\alpha > p \ge (p-1)/s$.
Choosing $\tau \ge (p-1)/2s$, we obtain (\ref{Fall1b}).

Letting again $p$ tend to $\decay_{\bsgamma}$, we have thus verified the theorem.
\end{proof}

\subsubsection{Unrestricted subspace sampling}

If the cost function satisfies $\$(k) = O(k^s)$ for $0\le s \le 1$,
we may again use multilevel algorithms as done in the previous
subsection.
In the case where we have a cost function $\$(k) = \Omega(k^s)$ for
$s \ge 1$ and product weights, changing dimension algorithms, as
considered in \cite{KSWW10, PW11}, have proved to be the essentially
optimal choice in the unrestricted subspace sampling setting,
see the analysis in \cite{PW11}.
We present here a slight modification of the changing dimension
algorithms from \cite{PW11} which ensures that the results from
\cite{PW11} do not only hold for product weights but for all
POD weights that satisfy the conditions of Corollary \ref{Corollary}.

As in \cite{PW11}, we assume that there exist positive constants
$c$, $C$, $\tau$, a non-negative $\lambda_1$, and a $\lambda_2\in [0,1]$ such
that for each $u\in\U\setminus \{\emptyset\}$ and $n\in\N$ there are algorithms $Q_{n,u}$
using $n$ function evaluations of functions $f_u\in H_u$ with
\begin{equation}
\label{CDA-Bausteine}
 e(Q_{n,u};H_u)^2 \le \frac{cC^{|u|}}{(n+1)^{2\tau}}
\left( 1 + \frac{\ln(n+1)}{(|u|-1)^{\lambda_2}} \right)^{\lambda_1(|u|-1)^{\lambda_2}},
\end{equation}
where by convention the last factor in (\ref{CDA-Bausteine}) should be $1$ for $|u|=1$.
We may assume that $c\ge 1$ and $C\ge C_0$, so that (\ref{CDA-Bausteine}) holds also
true for $n=0$.
With the help of the building blocks $Q_{n,u}$ one can define changing
dimension algorithms for a fixed $\lambda_0 \in (0, 1-1/\decay_\gamma)$ and
any given $\varepsilon >0$ in the
following way:
Let us put
\begin{equation}
\label{Ls}
L_r := \sum_{\emptyset \neq u \in \U} \gamma_u^r
\end{equation}
for suitable $r\ge 0$.
Choose $\tau$ such that
$\tau < \lambda_0\cdot \decay_{\bsgamma}/2$.
For each $u\in \U$ satisfying
$\gamma_u^{\lambda_0} L_{1-{\lambda_0}} c\, C^{|u|}  \le \varepsilon^2$
we choose
$n_u = n_u(\e, \lambda_0)$ to be zero and $Q_{n_u,u}$ to be the trivial zero
algorithm $Q_{n_u,u}f_u = 0$ for all $f_u\in H_u$.
Otherwise, we put
$n_u = \lfloor (\gamma^{\lambda_0}_u L_{1-{\lambda_0}}
c\,C^{|u|} \e^{-2})^{1/2\tau} \rfloor$
and choose $Q_{n_u,u}$ as in (\ref{CDA-Bausteine}).
We define the changing dimension algorithm $Q^{\CD}_{\e}$ by
\begin{equation}
\label{def-CD}
Q^{\CD}_{\e}(f) = f(\bsc) + \sum_{\emptyset \neq u \in \U}
Q_{n_u,u}(f_u).
\end{equation}
Observe that for any $\varepsilon >0$ there are only finitely many $u\in \U$ with
$n_u \ge 1$.
For given $\varepsilon>0$ let
\begin{equation*}
d(\e) := \max\left\{ \ell \in \N \,|\, c\,C^{\ell}
L_{1-{\lambda_0}}\gamma^{\lambda_0}_{[\ell]} > \varepsilon^2 \right\}.
\end{equation*}
Then it is easily verified that $|u|> d(\varepsilon)$ implies $n_u=0$.
Thus the ``$\e$-dimension'' $d(\e)$ is the largest number of active variables used by the
changing dimension algorithm $Q^{\CD}_{\e}$.
Due to Section \ref{CDA} we obtain
\begin{equation*}
\cost_{\unr}(Q^{\CD}_{\e})
\le \$(0) + \sum_{\emptyset \neq u \in \U} 2^{|u|}\$(|u|) n_u
\le  \$(0) + \$(d(\e)) \sum_{\ell =1}^{d(\e)} 2^{\ell}\sum_{|u|=\ell} n_u.
\end{equation*}
The following theorem is a slight generalization of \cite[Thm.~1]{PW11}.

\begin{theorem}\label{Thm_hoc_cda}
Let $\bsgamma = (\gamma_u)_{u\in \U}$ be POD weights that satisfy the assumptions of
Corollary \ref{Corollary}. Let $\lambda_0 \in (0, 1-1/\decay_\gamma)$, and  let
$\tau <\lambda_0 \cdot \decay_{\bsgamma}/2$ satisfy (\ref{CDA-Bausteine}).
Then the changing dimension algorithm $Q^{\CD}_{\e}$ defined in
(\ref{def-CD}) satisfies
\begin{equation*}
e(Q^{\CD}_{\e};\Hg) \le \e^{1-o(1)}
\hspace{2ex}\text{as $\e\to 0$,}
\end{equation*}
and its cost satisfies
\begin{equation*}
\cost_{\unr}(Q^{\CD}_{\e})
= O\left( \$(d(\varepsilon))
\varepsilon^{-1/\tau} \right),
\end{equation*}
where
\begin{equation*}
d(\e)
= O \left( \frac{\ln(1/\e)}{\ln\ln(1/\e)} \right)
= o(\ln(1/\e)).
\end{equation*}
If the cost function $\$$ satisfies
$\$(d) = O(e^{\ell d})$ for some $\ell \ge 0$, then
the integration problem is strongly tractable with exponent
\begin{equation*}
p^{\unr} \le \max \left\{ \frac{1}{\tau},
\frac{2}{\decay_{\bsgamma}-1} \right\}.
\end{equation*}
Let us now additionally assume that $\$(d) = \Omega(d)$ and that
the $n$th minimal worst case error of univariate integration satisfies
$e(n;H(K))= \Omega(n^{-\alpha})$. If (\ref{CDA-Bausteine}) holds for
$\tau$ arbitrarily close to $\alpha$, then
\begin{equation*}
p^{\unr} = \max \left\{ \frac{1}{\alpha},
\frac{2}{\decay_{\bsgamma}-1} \right\}.
\end{equation*}
\end{theorem}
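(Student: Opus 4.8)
The plan is to establish the error bound, the cost bound, and then combine them with the lower bound from Theorem~\ref{UnrLowBou} (which, via Lemma~\ref{Lemma3.8}, applies with $\decay_{\bsgamma,1} = \decay_{\bsgamma}$ for POD weights) to get the matching exponent. First I would bound the worst case error of $Q^{\CD}_\e$. Using the identity \Ref{worid} applied to the changing dimension algorithm, together with the decomposition $I(f) - Q^{\CD}_\e(f) = \sum_{u\ne\emptyset}(I_u(f_u) - Q_{n_u,u}(f_u))$ and the fact that the $f_u$ are orthogonal in $\Hg$, one gets
\begin{equation*}
[e(Q^{\CD}_\e;\Hg)]^2 = \sum_{\emptyset\ne u\in\U} \gamma_u\, [e(Q_{n_u,u};H_u)]^2.
\end{equation*}
For the indices $u$ with $n_u = 0$ the contribution is $\gamma_u C_0^{|u|} = \widehat\gamma_u$ up to the constant in \Ref{CDA-Bausteine}; these are exactly the $u$ with $\gamma_u^{\lambda_0} L_{1-\lambda_0}\,c\,C^{|u|}\le\e^2$, and since $\tau < \lambda_0\decay_{\bsgamma}/2$ one sums $\sum_u \widehat\gamma_u \le \sum_u (c\,C^{|u|}L_{1-\lambda_0})\,(\gamma_u^{\lambda_0} L_{1-\lambda_0} c C^{|u|}\e^{-2})^{1-\lambda_0}\cdot(\text{geometric slack})$ — more precisely one uses $\widehat\gamma_u \le (\text{const})\,\gamma_u^{\lambda_0}\e^{2(1-\lambda_0)}\cdot(\text{summable factor})$ to bound this tail by $O(\e^{2-o(1)})$. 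For the indices $u$ with $n_u\ge 1$, plug the choice $n_u = \lfloor(\gamma_u^{\lambda_0}L_{1-\lambda_0}c\,C^{|u|}\e^{-2})^{1/2\tau}\rfloor$ into \Ref{CDA-Bausteine}; the leading factor $cC^{|u|}(n_u+1)^{-2\tau}$ becomes $\le \e^2 \gamma_u^{-\lambda_0} L_{1-\lambda_0}^{-1}$, so each term contributes $\le \e^2\,\gamma_u^{1-\lambda_0} L_{1-\lambda_0}^{-1}\cdot(\text{log factor})$, and summing over $u$ gives $\e^2 L_{1-\lambda_0}^{-1}\sum_u\gamma_u^{1-\lambda_0}(\text{log factor}) = \e^2\cdot(\text{log factor})$, because $L_{1-\lambda_0} = \sum_u\gamma_u^{1-\lambda_0}<\infty$ (finiteness follows from $1-\lambda_0 > 1/\decay_{\bsgamma}$ plus Theorem~\ref{Corollary2}, which guarantees $\decay_{\bsgamma,\infty} = \decay_{\bsgamma,1}$ for these POD weights). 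The residual $\e^{-o(1)}$ comes from the logarithmic factor $(1+\ln(n_u+1)/(|u|-1)^{\lambda_2})^{\lambda_1(|u|-1)^{\lambda_2}}$ and from the number of active $u$, exactly as in \cite[Thm.~1]{PW11}; controlling it requires the bound on $d(\e)$.

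Next I would bound $d(\e)$ and the cost. From the definition $d(\e) = \max\{\ell : c\,C^\ell L_{1-\lambda_0}\gamma_{[\ell]}^{\lambda_0} > \e^2\}$, using $\gamma_{[\ell]} = \Gamma_\ell\prod_{j\in[\ell]}\gamma_j \le C_q(\ell!)^q\prod_{j=1}^\ell j^{-p^*+o(1)}$ (from $\Gamma_\ell\le C_q(\ell!)^q$ and $\gamma_j = j^{-p^*+o(1)}$ since $\decay_{\bsgamma,1} = p^*$), one gets $\gamma_{[\ell]}$ decaying super-exponentially in $\ell$ — like $(\ell!)^{-(p^*-q)+o(1)}$ up to $C^\ell$ factors. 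Hence $c C^\ell L_{1-\lambda_0}\gamma_{[\ell]}^{\lambda_0} > \e^2$ forces $(\ell!)^{\lambda_0(p^*-q)-o(1)} < (\text{const})^\ell \e^{-2}$, i.e. $\ell\ln\ell = O(\ln(1/\e))$, giving $d(\e) = O(\ln(1/\e)/\ln\ln(1/\e)) = o(\ln(1/\e))$. For the cost, $\cost_{\unr}(Q^{\CD}_\e) \le \$(0) + \$(d(\e))\sum_{\ell=1}^{d(\e)}2^\ell\sum_{|u|=\ell}n_u$; bounding $\sum_u 2^{|u|}n_u \le \sum_u 2^{|u|}(\gamma_u^{\lambda_0}L_{1-\lambda_0}cC^{|u|}\e^{-2})^{1/2\tau} = \e^{-1/\tau}(cL_{1-\lambda_0})^{1/2\tau}\sum_u (2^{2\tau}C)^{|u|/2\tau}\gamma_u^{\lambda_0/2\tau}$, and this last sum is finite provided $\lambda_0/2\tau > 1/\decay_{\bsgamma}$, which is exactly the hypothesis $\tau < \lambda_0\decay_{\bsgamma}/2$ combined with $\decay_{\bsgamma,\infty} = \decay_{\bsgamma}$ (Theorem~\ref{Corollary2}) and a Corollary~\ref{Corollary}-type argument to absorb the $C^{|u|}$ factor. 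This yields $\cost_{\unr}(Q^{\CD}_\e) = O(\$(d(\e))\,\e^{-1/\tau})$.

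Finally I would assemble the tractability statements. If $\$(d) = O(e^{\ell d})$, then $\$(d(\e)) = O(e^{\ell d(\e)}) = O(e^{o(\ln(1/\e))}) = \e^{-o(1)}$, so $\cost_{\unr}(Q^{\CD}_\e) = O(\e^{-1/\tau - o(1)})$; combined with $e(Q^{\CD}_\e;\Hg)\le\e^{1-o(1)}$ this gives $\comp_{\unr}(\e;\Hg) = O(\e^{-1/\tau - o(1)})$, and letting $\tau$ run up to the supremum of allowed values — namely $\tau$ arbitrarily close to $\min\{\alpha,\lambda_0\decay_{\bsgamma}/2\}$ and then $\lambda_0$ arbitrarily close to $1 - 1/\decay_{\bsgamma}$, so $\tau$ approaches $\min\{\alpha,(\decay_{\bsgamma}-1)/2\}$ — yields $p^{\unr}\le \max\{1/\alpha, 2/(\decay_{\bsgamma}-1)\}$. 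For the matching lower bound, under $e(n;H(K)) = \Omega(n^{-\alpha})$ we may take $\beta = \alpha$ in \Ref{assuni}, and then Theorem~\ref{UnrLowBou} together with \Ref{neslowboupod} (using $s\ge 1$ from $\$(d) = \Omega(d)$, so $\min\{1,s\} = 1$) gives $p^{\unr} \ge \max\{1/\alpha, 2/(\decay_{\bsgamma,1}-1)\} = \max\{1/\alpha, 2/(\decay_{\bsgamma}-1)\}$, so equality holds. The main obstacle is the careful bookkeeping of the $\e^{-o(1)}$ losses: one must check that the logarithmic factors in \Ref{CDA-Bausteine}, summed over the $O(d(\e))$ relevant cardinalities with $d(\e) = o(\ln(1/\e))$, genuinely only cost a factor $\e^{-o(1)}$ in both the error and the cost — this is the delicate part inherited from \cite{PW11}, and the POD structure enters precisely through Corollary~\ref{Corollary} and Theorem~\ref{Corollary2}, which let us replace the product-weight estimates of \cite{PW11} by the bound $\Gamma_k\le C_q(k!)^q$ without changing the decay exponent.
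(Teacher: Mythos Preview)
Your proposal is correct and follows essentially the same route as the paper. The paper itself does not give a detailed proof: it simply points to \cite[Thm.~1 and 2]{PW11} for the product-weight case, observes via Theorem~\ref{Corollary2} that $\decay_{\bsgamma,\infty}=\decay_{\bsgamma,1}$ for POD weights satisfying the assumptions of Corollary~\ref{Corollary} (and that this decay is invariant under scaling the $\gamma_j$ by constants, so the $C^{|u|}$ factors can be absorbed), and appeals to \Ref{neslowboupod} for the lower bound --- exactly the ingredients you identify and flesh out. One small imprecision: writing ``$\gamma_j = j^{-p^*+o(1)}$'' is stronger than what $\decay_{\bsgamma,1}=p^*$ gives you, but for the $d(\e)$ bound you only need the upper estimate $\gamma_j \le C_p\, j^{-p}$ for every $p<p^*$, which does follow and suffices for $\gamma_{[\ell]} \le C_q C_p^{\ell}(\ell!)^{q-p}$ and hence $\ell\ln\ell = O(\ln(1/\e))$.
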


In the case of product weights, the statement of Theorem \ref{Thm_hoc_cda}
was proved in \cite{PW11}, see Theorem 1 and 2 there.

In the case where we have general POD weights satisfying the assumptions of
Corollary \ref{Corollary}, we see that $\decay_{\bsgamma,\infty} = \decay_{\bsgamma,1}$,
see Theorem \ref{Corollary2}, and these quantities do not change if we multiply
the $\gamma_j$, $j\in\N$, by some constant.
With the help of this observation one can verify that for the upper bound on
$p^{\unr}$ the analysis in \cite{PW11}
only needs to be slightly modified to carry over to POD weights that satisfy the
assumptions of Corollary \ref{Corollary}. The lower bound follows from
(\ref{neslowboupod}).


\subsection{Weights with finite algorithmic dimension}
\label{UB_FAD}

Let $\mathcal{W}\subseteq \U$ with
minimal algorithmic dimension $d\in\N$, and let $(\gamma_u)_{u\in \U}$
be weights with $\gamma_u = 0$ for all $u\notin \mathcal{W}$ (i.e.,
$\AC = \AC(\bsgamma) \subseteq \mathcal{W}$).
Assume furthermore, that there exist non-negative constants $c,C,\beta_1,\beta_2$,
an $\alpha >0$, and for any $n\in\N_0$ a
quadrature $Q_n$, given by
\begin{equation}
\label{algobaustein-FAD}
Q_n(f) = \sum^n_{i=1} a_i^{(n)} f(\bst^{(i,n)})
\hspace{2ex}
\text{with $a_i^{(n)}\in\R$, $\bst^{(i,n)} \in D^{d}$,}
\end{equation}
such that
\begin{equation}
\label{assuconv}
e((Q_n)_{u}; H_u) \le
cC^{|u|}\,(n+1)^{-\alpha}\, (1+\ln(n+1))^{\beta_1 |u| + \beta_2}
\hspace{2ex}\text{for all $u\subseteq [d]$.}
\end{equation}
With the help of the algorithms $Q_n$ and a mapping $\phi$ that
satisfies
(\ref{phi}), we can construct for arbitrary $v\in \U$
algorithms $Q^{\mathcal{W}}_{v}$ on
$\Hg$ in the following way (cf. also \cite[Prop.~3.11]{Gne10}):
First we formally consider infinite vectors
\begin{equation*}
\bst^{(i,n)}_{\infty} \in D^\N,
\hspace{2ex}\text{where the $j$th component is}\hspace{2ex}
t^{(i,n)}_{\infty,j} := t^{(i,n)}_{\phi(j)}.
\end{equation*}
Then we define  the quadrature
$Q^{\mathcal{W}}_{n,v}$ by
\begin{equation}
\label{algoW}
Q^{\mathcal{W}}_{n,v}(f) := \sum^n_{i=1} a_i^{(n)}
f(\bst^{(i,n)}_{\infty,v};\bsc)
\hspace{2ex}\text{for all $f\in\Hg$.}
\end{equation}
Note that for $u\subseteq v$, $u\in \mathcal{W}$, we have $|u|=|\phi(u)|$
and $e((Q^{\mathcal{W}}_{n,v})_u; H_u) = e((Q_n)_{\phi(u)};H_{\phi(u)} )$.
By combining such algorithms in a suitable way, we get the
following results for nested and unrestricted subspace
sampling.

\subsubsection{Nested subspace sampling}

In the nested and in the unrestricted subspace sampling regime
we propose to use multilevel algorithms $Q^{\ML}_m$ that employ
the quadratures $Q_{v_k} = Q^{\mathcal{W}}_{n,v_k}$ defined in (\ref{algoW}).
Here we consider for the $k$th level the set of coordinates
$v_k = v_k^{(1)} = \cup_{j\in [L_k]} u_j$ and
$L_k:= L \lceil a^{k-1} \rceil$,
where $L\in \N$ and $a\in (1,\infty)$ are fixed.
As in (\ref{algobaustein}),
the quadrature $\widehat{Q}_k$ on the $k$th level is given by
\begin{equation*}
\widehat{Q}_k(f) := Q^{\mathcal{W}}_{n_k,v_k}(f-\Psi_{v_{k-1}}f).
\end{equation*}
Due to (\ref{worMLid}) and (\ref{assuconv}) we get for arbitrarily
small $\delta >0$
\begin{equation*}
\begin{split}
[e(Q^{\ML}_m;\Hg)]^2 &=
\sum^m_{k=1}\sum_{j\in V_k} \gamma_{u_j}
[e((Q^{\mathcal{W}}_{n_k, v_k})_{u_j};H_{u_j})]^2
+ \sum_{j\notin U(m)} \widehat{\gamma}_{u_j}\\
&\le \widetilde{C}^2 \sum^m_{k=1} \left( \sum^{L_k}_{j=L_{k-1}+1}
\gamma_{u_j} \right) (n_k+1)^{2(\delta- \alpha)}
+\tail_{\bsgamma}(L_m),
\end{split}
\end{equation*}
where the constant $\widetilde{C}$ depends on
$d, \alpha,\delta, c,C, \beta_1$, and $\beta_2$, but not on $m$ or the
specific values $n_k$, $k=1,\ldots,m$. Notice that in the last
inequality we implicitly used $n_1\ge n_2 \ge \cdots \ge n_m$,
since it might happen for some $L_{k-1} < j \le L_k$ that
$u_j\subseteq v_l$ for an $l<k$.

This estimate is almost identical with estimate (45) in \cite[Sect.~3.2.2]{Gne10}:
there one just has to replace $n_k$ by $n_k+1$ and $\delta - 1$ by $\delta-\alpha$,
and rename the constant $C_{\eta,\omega,\delta}$ by $\widetilde{C}^2$. Adapting
the reasoning in \cite{Gne10} that follows after estimate (45), we obtain the
following theorem.

\begin{theorem}
\label{UppBouFAD}
Let $\$(k) = O(k^s)$ for some $s\ge 0$. Let the weights $\bsgamma$ have finite
algorithmic dimension, and let $\decay_{\bsgamma} >1$.
Assume that there exist for $\alpha >0$ and all $n\in\N$ algorithms $Q_n$ as in
(\ref{algobaustein-FAD}) that satisfy (\ref{assuconv}). For $k=1,2,\ldots$,
let $Q_{v_k} = Q^{\mathcal{W}}_{n,v_k}$ be as in (\ref{algoW}).
Then the multilevel algorithms $Q^{\ML}_m$, defined as in (\ref{multilevel-algo}),
establish the following result:
The exponent of strong tractability in the nested subspace sampling
model satisfies
\begin{equation}
\label{uppboufad}
p^{\nes} \le \max \left\{ \frac{1}{\alpha}, \frac{2s}{\decay_{\bsgamma} - 1} \right\}.
\end{equation}
\end{theorem}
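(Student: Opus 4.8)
The plan is to feed the error estimate already derived above, namely
\[
[e(Q^{\ML}_m;\Hg)]^2 \le \widetilde{C}^2 \sum^m_{k=1} \sigma_k\, (n_k+1)^{2(\delta-\alpha)} + \tail_{\bsgamma}(L_m),
\qquad \sigma_k := \sum^{L_k}_{j=L_{k-1}+1} \gamma_{u_j},
\]
valid for arbitrarily small $\delta>0$, into the cost bound $\cost_{\nes}(Q^{\ML}_m)\le \$(0)+2\sum^m_{k=1}n_k\$(|v_k|)$ from \Ref{costML}, and then to choose $m$ and $n_1\ge\cdots\ge n_m$ nearly optimally, exactly as in the argument following estimate~(45) of \cite{Gne10}. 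Put $\alpha':=\alpha-\delta$ and fix $p\in(1,\decay_{\bsgamma})$ (possible since $\decay_{\bsgamma}>1$). First I would record the elementary bounds that make weights of finite algorithmic dimension tractable here: because $|u_j|\le d$ for all $j$, one has $\gamma_{u_j}=\Theta(\widehat{\gamma}_{u_j})$ with constants depending only on $d$ and $C_0$, so from $\widehat{\gamma}_{u_j}=o(j^{-p})$, the monotonicity of $(\widehat{\gamma}_{u_j})_j$ and $L_k=\Theta(a^k)$ one gets $\sigma_k=O(a^{k(1-p)})$ and $\tail_{\bsgamma}(L_m)=O(a^{m(1-p)})$; moreover $|v_k|\le dL_k=O(a^k)$ yields $\$(|v_k|)=O(a^{ks})$.

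Next I would run the standard multilevel optimisation. Given a target accuracy $\e>0$, choose $m=m(\e):=\lceil\tfrac{2}{p-1}\log_a(1/\e)\rceil$, so that $\tail_{\bsgamma}(L_m)=O(\e^2)$, and set $n_k:=\lceil C a^{-k\zeta}\rceil$ with $\zeta:=\tfrac{p+s-1}{2\alpha'+1}>0$, where the constant $C=C(\e)$ is fixed so that $\cost_{\nes}(Q^{\ML}_m)$ equals a prescribed value $N$. Defining the $n_k$ through the \emph{upper} bound on $\sigma_k$ rather than through $\sigma_k$ itself makes $n_1\ge n_2\ge\cdots\ge n_m$ automatic, which is exactly what the error estimate requires; checking that the ceilings and the additive $\$(0)$ change the cost only by a constant factor plus a term of order $a^{ms}$ is routine. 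Summing the two geometric series that arise, one finds $\cost_{\nes}(Q^{\ML}_m)=O\!\big(CS_m+a^{ms}\big)$ and $\sum_{k=1}^m\sigma_k(n_k+1)^{-2\alpha'}=O(C^{-2\alpha'}S_m)$, where $S_m:=\sum_{k=1}^m a^{k\theta}$ and $\theta:=\tfrac{(1-p)+2\alpha's}{2\alpha'+1}$ — note $s-\zeta=\theta$, so cost and level-error scale the same way. Taking $N:=\lceil\e^{-1/\alpha'}S_m^{(2\alpha'+1)/(2\alpha')}\rceil$ (so $C=\Theta(N/S_m)$) forces the quadrature-error contribution down to $O(\e^2)$, hence $e(Q^{\ML}_m;\Hg)=O(\e)$ with $\cost_{\nes}(Q^{\ML}_m)=O(\e^{-1/\alpha'}S_m^{(2\alpha'+1)/(2\alpha')}+a^{ms})$.

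It then remains to read off the exponent from the sign of $\theta$, which is precisely the sign of $\tfrac{2s}{p-1}-\tfrac{1}{\alpha'}$. If $\theta<0$ then $S_m=O(1)$ and $a^{ms}=O(\e^{-2s/(p-1)})=o(\e^{-1/\alpha'})$, giving cost $O(\e^{-1/\alpha'})$; if $\theta>0$ then $S_m=O(a^{m\theta})=O(\e^{-2\theta/(p-1)})$ and a one-line simplification of exponents collapses the cost to $O(\e^{-2s/(p-1)})$; the borderline $\theta=0$ gives $S_m=O(m)$, hence only an extra polylogarithmic factor. In all cases $\cost_{\nes}(Q^{\ML}_m)=O(\e^{-\max\{1/\alpha',\,2s/(p-1)\}-o(1)})$, so for every $q>\max\{1/\alpha',\,2s/(p-1)\}$ there are multilevel algorithms with error $\le\e$ and cost $O(\e^{-q})$; letting $\delta\downarrow 0$ and $p\uparrow\decay_{\bsgamma}$ yields \Ref{uppboufad}. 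The genuinely substantive ingredient — the error estimate in the first display — is already in hand; what is left is bookkeeping: the rounding in the $n_k$, the degenerate case $s=0$ (where $\theta<0$ automatically and the $a^{ms}$-term disappears), the $\delta$-dependence of $\widetilde{C}$ (harmless, since $\delta$ is fixed before $\e\to 0$), and keeping the two slack parameters $p$ and $\delta$ straight.
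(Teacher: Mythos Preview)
Your proposal is correct and follows essentially the same approach as the paper: the paper itself does not spell out the optimisation, but simply remarks that the displayed error bound is ``almost identical with estimate~(45) in \cite[Sect.~3.2.2]{Gne10}'' (with $n_k\to n_k+1$ and $\delta-1\to\delta-\alpha$) and then defers to the reasoning that follows~(45) there, which is precisely the multilevel balancing you carry out. Your choice to define $n_k$ through the \emph{upper bound} $a^{k(1-p)}$ on $\sigma_k$ (rather than through $\sigma_k$ itself, as in the proof of Theorem~\ref{Theorem6}) is a clean way to secure the monotonicity $n_1\ge\cdots\ge n_m$ that the error estimate requires, and your case split on the sign of $\theta$ with the exponent collapse in the case $\theta>0$ is exactly the computation one finds in \cite{Gne10}.
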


If the assumptions of Theorem \ref{UppBouFAD} hold and if additionally
the $n$th minimal worst case error of univariate integration satisfies
$e(n; H(K)) = \Omega(n^{-\alpha})$, then, due to the lower bound on $p^{\nes}$ in
(\ref{neslowboupod}), we see that our upper bound on $p^{\nes}$ in (\ref{uppboufad})
is sharp for finite-intersection weights; cf. also
Section \ref{HOC-POD-NEST}.

\subsubsection{Unrestricted subspace sampling}
\label{SUBSEC_4.4.2}

In the case where the cost function $\$$ is of the form $\$(k) = \Omega(k^s)$ for some
$s>1$, we can improve the bound on the exponent of tractability from Theorem
\ref{UppBouFAD} by changing from the nested to the more generous unrestricted
subspace sampling model. For general finite-order weights $\bsgamma$ of order $\omega$ appropriate changing dimension algorithms were provided
in \cite{KSWW10}. These algorithms can in particular be used
for weights with finite algorithmic dimension $d$, which are
finite-order weights of order $\omega = d$.
If $\decay_{\bsgamma, \omega} > 1$ and if there exist algorithms $Q_n$ as in
(\ref{algobaustein-FAD}) satisfying (\ref{assuconv}), then
changing dimension algorithms lead to an upper bound
\begin{equation}
\label{ksww10}
p^{\unr} \le \max \left\{ \frac{1}{\alpha}, \frac{2}{\decay_{\bsgamma}-1} \right\},
\end{equation}
see \cite[Thm.~5(a) \& Sect.~5.7]{KSWW10}.
Together with Theorem \ref{UppBouFAD} this implies the following result.

\begin{theorem}
\label{UppBouFADunr}
Let $\$(k) = O(k^s)$ for some $s\ge 0$. Let the weights $\bsgamma$ have finite
algorithmic dimension, and let $\decay_{\bsgamma} >1$. Assume that there exists
for some $\alpha >0$ and all $n\in\N$ algorithms $Q_n$ as in (\ref{algobaustein-FAD})
that satisfy assumption (\ref{assuconv}).
Then the exponent of tractability in the unrestricted subspace sampling model
satisfies
\begin{equation}
\label{uppboufadunr}
p^{\unr} \le \max \left\{ \frac{1}{\alpha}, \frac{2\min\{1,s\}}{\decay_{\bsgamma}-1} \right\}.
\end{equation}
\end{theorem}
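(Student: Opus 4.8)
The plan is to combine the two upper bounds already available: the multilevel bound of Theorem \ref{UppBouFAD}, which gives $p^{\unr} \le p^{\nes} \le \max\{1/\alpha, 2s/(\decay_{\bsgamma}-1)\}$ (using that the cost in the unrestricted model never exceeds the cost in the nested model, so the same multilevel algorithm is admissible with at most the same cost), and the changing dimension bound \eqref{ksww10}, namely $p^{\unr} \le \max\{1/\alpha, 2/(\decay_{\bsgamma}-1)\}$, which holds because weights of finite algorithmic dimension $d$ are in particular finite-order weights of order $\omega = d$, so the changing dimension algorithms of \cite[Sect.~5.7]{KSWW10} apply, and because $\decay_{\bsgamma} = \decay_{\bsgamma,\omega} > 1$ under our hypothesis (here one should note that for weights of finite algorithmic dimension all coordinate sets have size $\le d = \omega$, so $\decay_{\bsgamma} = \decay_{\bsgamma,\omega}$ and the two decay quantities coincide). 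Taking the better of the two bounds for each value of $s$ yields the claimed estimate.

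First I would record that whenever $s \le 1$ the multilevel bound of Theorem \ref{UppBouFAD} already reads $p^{\unr} \le \max\{1/\alpha, 2s/(\decay_{\bsgamma}-1)\} = \max\{1/\alpha, 2\min\{1,s\}/(\decay_{\bsgamma}-1)\}$, since $\min\{1,s\} = s$ in that range; this uses only that the multilevel algorithm constructed there is admissible in the unrestricted model at the same (or smaller) cost, which is immediate from the remark after \eqref{costML} that the cost bound \eqref{costML} is valid in the unrestricted model as well. Next, for $s \ge 1$ I would invoke the changing dimension algorithms from \cite{KSWW10}: since $\$(k) = O(k^s)$ with $s \ge 1$ means in particular $\$$ grows at most polynomially, and since $\decay_{\bsgamma,\omega} = \decay_{\bsgamma} > 1$ and the building blocks $Q_n$ of \eqref{algobaustein-FAD}–\eqref{assuconv} are exactly of the type required there, \cite[Thm.~5(a) \& Sect.~5.7]{KSWW10} gives \eqref{ksww10}, i.e. $p^{\unr} \le \max\{1/\alpha, 2/(\decay_{\bsgamma}-1)\} = \max\{1/\alpha, 2\min\{1,s\}/(\decay_{\bsgamma}-1)\}$ because $\min\{1,s\}=1$ here. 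Combining the two ranges gives \eqref{uppboufadunr} for all $s \ge 0$.

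The only genuinely delicate points — and the main thing to verify carefully rather than the main obstacle — are the bookkeeping checks that make the citation of \cite{KSWW10} legitimate: one must confirm that a set of weights of finite algorithmic dimension $d$ indeed satisfies the finite-order hypothesis of \cite[Thm.~5(a)]{KSWW10} with $\omega = d$ (this is exactly the implication "algorithmic dimension $d$ $\Rightarrow$ finite-order of order $\le d$" noted after the definition of algorithmic dimension, cf. \eqref{lowboualgdim}), that the convergence assumption \eqref{assuconv} on the univariate/low-dimensional building blocks matches the form of the building blocks required in \cite[Sect.~5.7]{KSWW10}, and that the cost model used there (accounting $\$(|u|)$ per evaluation of $f_u$, up to the $2^{|u|}$ combinatorial overhead discussed in Section \ref{CDA}) coincides with $\cost_{\unr}$ as defined here. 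Once these identifications are in place the proof is a one-line combination of Theorem \ref{UppBouFAD} and \eqref{ksww10}, so there is no real analytic obstacle; the work is entirely in checking that the hypotheses of the two cited results are met.
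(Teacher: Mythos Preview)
Your proposal is correct and follows essentially the same approach as the paper: the paper also derives Theorem \ref{UppBouFADunr} directly by combining Theorem \ref{UppBouFAD} (the multilevel bound, covering $s\le 1$ via $p^{\unr}\le p^{\nes}$) with the changing dimension bound \eqref{ksww10} from \cite{KSWW10} (covering $s\ge 1$), after noting that weights of finite algorithmic dimension $d$ are finite-order weights of order $\omega=d$ so that $\decay_{\bsgamma}=\decay_{\bsgamma,\omega}$. The bookkeeping checks you list are exactly the ones the paper records in the paragraph preceding the theorem.
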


Our lower bound on $p^{\unr}$ in (\ref{neslowboufad}) shows that the upper bound
(\ref{uppboufadunr}) is sharp for the sub-class of finite-intersection weights
if $e(n;H(K)) = \Omega(n^{-\alpha})$.

For finite-intersection weights and the Wiener kernel
$K(x,y) = \min\{x,y\}$ the bound (\ref{uppboufadunr}) was proved in
\cite[Thm.~3.12]{Gne10}.

\section{Higher Order Convergence}
\label{HOC}

In this section we confine ourselves to the domain $D=[0,1]$, endowed with
the restricted Lebesgue measure. We assume that $\alpha \ge 1$ is an integer.

\subsection{Higher order polynomial lattice rules}

Here we introduce polynomial lattice rules which can achieve arbitrary high convergence rates of the integration error for suitably smooth functions, see \cite{DP07}.

Classical polynomial lattices were introduced in \cite{nie92} (see also \cite[Section 4.4]{niesiam}) by Niederreiter. These lattices are obtained from rational functions over finite fields. For a prime $b$ let $\FF_b((x^{-1}))$ be the field of formal Laurent series over $\FF_b$. Elements of $\FF_b((x^{-1}))$ are formal Laurent series, $$L=\sum_{l=w}^{\infty}t_l x^{-l},$$ where $w$ is an arbitrary integer and all $t_l \in \FF_b$.  Note that $\FF_b((x^{-1}))$ contains the field of rational functions over $\FF_b$ as a subfield. Further let $\FF_b[x]$ be the set of all polynomials over $\FF_b$.

The following definition is a slight generalization of the definition from \cite{nie92}, see also \cite{niesiam}, which first appeared in \cite{DP07}; see also \cite[Chapter~15.7]{DP12}.

\begin{definition}
Let $b$ be prime and $1 \le m \le n$. Let $\vartheta_n$ be the map from $\FF_b((x^{-1}))$ to the interval $[0,1)$ defined by $$\vartheta_n\left(\sum_{l=w}^{\infty}t_l x^{-l}\right)=\sum_{l=\max(1,w)}^n t_l b^{-l}.$$

For a given dimension $s \ge 1$, choose an irreducible polynomial $p \in \FF_b[x]$ with $\deg(p)=n \ge 1$ and let $\bsq  = (q_1,\ldots ,q_s) \in (\FF_b[x])^s$. For $0 \le h<b^m$ let $h=h_0+h_1 b +\cdots +h_{m-1}b^{m-1}$ be the $b$-adic expansion of $h$. With each such $h$ we associate the polynomial $$h(x)=\sum_{r=0}^{m-1}h_r x^r \in \FF_b[x].$$ Then $\cS_{p,m,n}(\bsq)$ is the point set consisting of the $b^m$ points $$\bsx_h=\left(\vartheta_n\left(\frac{h(x) q_1(x)}{p(x)}\right),\ldots ,\vartheta_n\left(\frac{h(x) q_s(x)}{p(x)}\right)\right) \in [0,1)^s,$$ for $0 \le h < b^m$. An equal quadrature rule $\frac{1}{N} \sum_{h=0}^{N-1} f(\bsx_h)$ using the point set $\cS_{p,m,n}(\bsq) = \{\bsx_0, \bsx_1,\ldots, \bsx_{b^m-1}\}$ is called a polynomial lattice rule.
\end{definition}

We call $\bsq$ the generating vector of the polynomial lattice rule and $p$ the modulus. For more information on (higher order) polynomial lattice rules see \cite{DP07,DP12}.

Let  $x = \sum_{i=1}^\infty \tfrac{x_i}{b^i} \in [0,1)$ and let $\sigma = \sum_{i=1}^\infty \tfrac{\sigma_i}{b^i} \in [0,1)$, where $x_i, \sigma_i \in \{0, \ldots, b-1 \}$. We define the digital $b$-adic shifted point $y$ by
\[
   y = x \oplus \sigma = \sum_{i=1}^\infty \tfrac{y_i}{b^i},
\]
where $y_i = x_i + \sigma_i \in \integer_b$. For points $\bsx \in [0,1)^s$ and $\bssigma \in [0,1)^s$ the digital $b$-adic shift $\bsx \oplus \bssigma$ is defined component wise.

\begin{definition}\label{defshiftednet}
A  polynomial lattice rule $Q_{\bsq,p}$ for which the underlying quadrature points are digitally shifted by the same  $\bssigma \in [0,1)^s$ is called a digitally shifted polynomial lattice rule or simply a shifted polynomial lattice rule $Q_{\bsq, p}(\bssigma)$.
\end{definition}

\subsection{Reproducing kernel of smoothness $\alpha$}

Let $c \in [0,1]$ and let $\alpha \ge 1$ be an integer. We consider the anchored reproducing kernel for smooth functions anchored at $c$ given by (see
\cite[Example~4.2]{KSWW10a})
\begin{equation*}
K_{\alpha,c}(x,y) =  \left\{\begin{array}{ll} \sum_{r=1}^{\alpha-1} \frac{(x-c)^{r}}{r!}
\frac{(y-c)^r}{r!} +  \int_c^1
\frac{(x-t)_+^{\alpha-1}}{(\alpha-1)!}
\frac{(y-t)_+^{\alpha-1}}{(\alpha-1)!} \,\mathrm{d} t,  & \mbox{if
} x, y > c, \\ \sum_{r=1}^{\alpha-1} \frac{(x-c)^{r}}{r!}
\frac{(y-c)^r}{r!} + \int_0^c \frac{(t-x)_+^{\alpha-1}}{(\alpha-1)!}
\frac{(t-y)_+^{\alpha-1}}{(\alpha-1)!} \,\mathrm{d} t, & \mbox{if }
x, y < c, \\ 0 & \mbox{otherwise},
\end{array} \right.
\end{equation*}
where $(x-t)_+ = \max(x-t,0)$ and $(x - t)_+^0 := 1_{x  > t}$ and for $\alpha=1$ the empty sum $\sum_{r=1}^{\alpha-1}$ is defined as $0$. The
inner product of the corresponding reproducing kernel Hilbert space
$H(K_{\alpha,c})$ is given by
\begin{equation*}
\langle f, g \rangle_{H(K_{\alpha,c})} = \sum_{r=1}^{\alpha-1} f^{(r)}(c)
g^{(r)}(c) + \int_0^1 f^{(\alpha)}(x) g^{(\alpha)}(x) \,\mathrm{d}
x,
\end{equation*}
with corresponding norm $\|\cdot \|_{H(K_{\alpha,c})} = \sqrt{ \langle \cdot, \cdot \rangle_{H(K_{\alpha, c})}}$. Note that for every $f \in H(K_{\alpha, c})$ we have $f(c) =0$.

It is well known that the $n$th minimal error of univariate integration on $H(K_{\alpha,c})$
is of order
\begin{equation}\label{univariate_convergence}
 e(n;H(K_{\alpha,c})) = \Omega(n^{-\alpha}).
\end{equation}


\subsection{Embedding theorem}
\label{EMBEDDING}

We now investigate the decay of the Walsh coefficients for functions in $H(K_{\alpha,c})$. To do so, we briefly introduce Walsh functions in base $b$ \cite{chrest, Fine, walsh}. Let $b \ge 2$ be an integer and let $\omega_b = \mathrm{e}^{2\pi \mathrm{i} / b}$ be the $b$-th root of unity. For a nonnegative integer $k$ let $k = \kappa_0 + \kappa_1 b + \cdots + \kappa_{a-1} b^{a-1}$ denote the $b$-adic representation of $k$ and for $x \in [0,1)$ let $x = \xi_1 b^{-1} + \xi_2 b^{-2} + \cdots$ denote the $b$-adic representation of $x$, where we assume that infinitely many $\xi_i$ are different from $b-1$. Then the $k$th Walsh function in base $b$ is given by
\begin{equation*}
\wal_k(x) = \omega_b^{\kappa_0 \xi_1 + \kappa_1 \xi_2 + \cdots + \kappa_{a-1} \xi_a}.
\end{equation*}
For a function $f$ defined on $[0,1]$ we define the $k$th Walsh coefficient by
\begin{equation*}
\widehat{f}(k) = \int_0^1 f(x) \overline{\wal_k(x)} \,\mathrm{d} x.
\end{equation*}
See also \cite[Chapter~14, Appendix~A]{DP12} for more information on Walsh functions in the context of numerical integration.

Let $k = \kappa_1 b^{a_1-1} + \cdots + \kappa_\nu b^{a_\nu-1}$ with
$a_1 > \cdots > a_\nu > 0$ and $\kappa_1,\ldots, \kappa_\nu \in
\{1,\ldots, b-1\}$. Set
\begin{equation*}
\mu_\alpha(k) = \left\{\begin{array}{ll} 0 & \mbox{if } k = 0, \\
a_1 + \cdots + a_{\min(\alpha,\nu)} & \mbox{if } k > 0.
\end{array} \right.
\end{equation*}

For $\alpha \ge 2$ let $\mathcal{W}_\alpha$ denote the space of all Walsh series
$f:[0,1) \to\mathbb{R}$ given by
\begin{equation*}
f(x) = \sum_{k=1}^\infty \widehat{f}(k) \wal_k(x),
\end{equation*}
with
\begin{equation*}
\|f\|_{\mathcal{W}_\alpha} := \sup_{k \in \mathbb{N}}
|\widehat{f}(k)|  b^{\mu_\alpha(k)} < \infty.
\end{equation*}

It was shown in \cite[Lemma~3]{D09} that there is a constant $C_{1,r}
> 0$ such that
\begin{equation*}
\left| \int_0^1 \frac{x^r}{r!} \overline{\wal_k(x)} \,\mathrm{d} x
\right| \le \left\{\begin{array}{ll} 0 & \mbox{if } \nu > r, \\ C_{1,r}
b^{-\mu_r(k)} & \mbox{if } 0 \le \nu \le r
\end{array} \right\} \le C_{1,r} b^{-\mu_{\alpha}(k)}.
\end{equation*}
The constant $C_{1,r}$ can be chosen as
\begin{equation}\label{const_c1}
C_{1,r} = r! \left(\frac{3}{2\sin \pi/b}\right)^r \left(1 + \frac{1}{b} + \frac{1}{b(b+1)}\right)^{r-1}.
\end{equation}
Thus, there is a constant $C_{2,\alpha} > 0$ such that
\begin{equation*}
\left| \sum_{r=1}^{\alpha-1} \int_0^1 \frac{(x-c)^r}{r!}
\overline{\wal_k(x)} \,\mathrm{d} x \int_0^1 \frac{(y-c)^r}{r!}
\wal_l(y) \,\mathrm{d} y \right| \le C_{2,\alpha} b^{-\mu_{\alpha}(k) -
\mu_{\alpha}(l)}.
\end{equation*}
We can choose $$C_{2,\alpha} = \sum_{r=1}^{\alpha-1} C_{1,r}^2.$$

For $k \in \mathbb{N}_0$ let $J_k(x) = \int_0^x
\overline{\wal_k(t)}\,\mathrm{d} t$. Note that for $k > 0$ we have
$J_k(0) = J_k(1) = 0$. The following result goes back to
Fine~\cite{Fine} (see also \cite[Lemma~1]{D09}). The function
$J_k(x)$ can be represented by a Walsh series
\begin{equation*}
J_k(x) = \sum_{m=0}^\infty r_k(m) \wal_k(x),
\end{equation*}
where for $k \in \mathbb{N}$ with $k =  \kappa_1 b^{a_1-1} + \cdots + \kappa_\nu b^{a_\nu-1}$ and $k' = k-\kappa_1 b^{a_1-1}$ we have
\begin{equation*}
r_{k}(m) = \left\{\begin{array}{ll} b^{-\mu_1(k)}
(1-\omega_b^{-\kappa_1})^{-1} & \mbox{if } m = k', \\
b^{-\mu_1(k)} (1/2+(\omega_b^{-\kappa_1}-1)^{-1}) & \mbox{if } m =
k, \\ b^{-\mu_1(m)} (\omega_b^{\theta}-1)^{-1} & \mbox{if } m =
\theta b^{a_1+a+1} + k, \\ 0 & \mbox{otherwise}.
\end{array} \right.
\end{equation*}
For $k=0$ we have
\begin{equation*}
r_{0}(m) = \left\{\begin{array}{ll}  b^{-\mu_1(m)}
(\omega_b^{\theta}-1)^{-1} & \mbox{if } m = \theta b^{a+1},
\\ 0 & \mbox{otherwise}.
\end{array} \right.
\end{equation*}

For $k \in \mathbb{N}_0$ and $\alpha =1$ let $\chi_1(k) = \int_0^1
1_{[t,1]}(x) \overline{\wal_k(x)} \,\mathrm{d} x$ and for $\alpha >
1$ let
\begin{equation*}
\chi^{(+)}_\alpha(k) = \int_0^1 (x-t)_+^{\alpha-1}
\overline{\wal_k(x)}\,\mathrm{d} x
\end{equation*}
and
\begin{equation*}
\chi^{(-)}_\alpha(k) = \int_0^1 (t-x)_+^{\alpha-1}
\overline{\wal_k(x)}\,\mathrm{d} x.
\end{equation*}

\begin{lemma}\label{lem_bound_taylor}
For $\alpha \in \mathbb{N}$ and $t \in [0,1]$ we have
\begin{equation*}
\left|\chi^{(+)}_\alpha(k) \right|, \left|\chi^{(-)}_\alpha(k)
\right| \le C b^{-\mu_{\alpha}(k)} \quad \mbox{for all } k \in
\mathbb{N}_0.
\end{equation*}
\end{lemma}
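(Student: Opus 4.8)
The plan is to prove the two bounds together by induction on $\alpha$, lowering $\alpha$ to $\alpha-1$ by a single integration by parts against the Walsh primitives $J_k(x)=\int_0^x\overline{\wal_k(t)}\,\mathrm{d}t$ and then estimating the resulting Walsh series by means of the explicit Fine coefficients $r_k(m)$ recalled above.

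First I would dispose of $k=0$: there $\mu_\alpha(0)=0$, while $|\chi^{(+)}_\alpha(0)|=(1-t)^\alpha/\alpha\le 1$ and $|\chi^{(-)}_\alpha(0)|=t^\alpha/\alpha\le 1$, so the claim holds with any $C\ge 1$. Fix then $k\ge 1$, written as $k=\kappa_1 b^{a_1-1}+\cdots+\kappa_\nu b^{a_\nu-1}$ with $a_1>\cdots>a_\nu>0$. For the base case $\alpha=1$ one has $\chi^{(+)}_1(k)=\int_t^1\overline{\wal_k(x)}\,\mathrm{d}x=J_k(1)-J_k(t)$ and $\chi^{(-)}_1(k)=\int_0^t\overline{\wal_k(x)}\,\mathrm{d}x=J_k(t)$; since $J_k(0)=J_k(1)=0$ for $k\ge 1$, both reduce to $\pm J_k(t)=\pm\sum_m r_k(m)\wal_m(t)$. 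From the explicit formula — $r_k(\cdot)$ is supported on $m\in\{k-\kappa_1 b^{a_1-1},\,k\}$ and on $m=\theta b^{a_1+a+1}+k$ with $\theta\in\{1,\dots,b-1\}$, $a\ge 0$, and $|r_k(\cdot)|$ is bounded by a $b$-dependent constant times $b^{-\mu_1(k)}=b^{-a_1}$ in the first two cases and times $b^{-\mu_1(m)}=b^{-(a_1+a+2)}$ in the last — summing the geometric series in $a$ gives $|\chi^{(\pm)}_1(k)|\le C_1 b^{-a_1}=C_1 b^{-\mu_1(k)}$.

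For $\alpha\ge 2$ I would integrate by parts once, with $v=J_k$:
\begin{equation*}
\chi^{(+)}_\alpha(k)=\bigl[(x-t)_+^{\alpha-1}J_k(x)\bigr]_0^1-(\alpha-1)\int_0^1(x-t)_+^{\alpha-2}J_k(x)\,\mathrm{d}x ,
\end{equation*}
where the boundary term vanishes because $J_k(1)=0$ and $(0-t)_+^{\alpha-1}=0$. Inserting $J_k=\sum_m r_k(m)\wal_m$ — the series converging absolutely by the base-case estimate $\sum_m|r_k(m)|<\infty$ — and using that $(x-t)_+^{\alpha-2}$ is real, one obtains $\chi^{(+)}_\alpha(k)=-(\alpha-1)\sum_m r_k(m)\,\overline{\chi^{(+)}_{\alpha-1}(m)}$, and analogously $\chi^{(-)}_\alpha(k)=(\alpha-1)\sum_m r_k(m)\,\overline{\chi^{(-)}_{\alpha-1}(m)}$ (the boundary term $t^{\alpha-1}J_k(0)$ vanishes too). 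Applying the induction hypothesis $|\chi^{(\pm)}_{\alpha-1}(m)|\le C_{\alpha-1}b^{-\mu_{\alpha-1}(m)}$, it remains to bound $\sum_m|r_k(m)|\,b^{-\mu_{\alpha-1}(m)}$ by a $(b,\alpha)$-constant times $b^{-\mu_\alpha(k)}$. This is the combinatorial core: split the sum over the three families in the support of $r_k(\cdot)$, read off $\mu_1(m)$ and $\mu_{\alpha-1}(m)$ from the $b$-adic digits of each $m$, and use the strict inequalities $a_1>a_2>\cdots>a_\nu$ to verify $\mu_1(m)+\mu_{\alpha-1}(m)\ge\mu_\alpha(k)$ — with a genuine surplus of at least $a_1$ over the required order on the shifted family $m=\theta b^{a_1+a+1}+k$, which furnishes the convergent geometric series in $a$. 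This closes the induction with $C_\alpha=(\alpha-1)\,C_b'\,C_{\alpha-1}$, and one takes $C=C_\alpha$.

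The main obstacle is exactly this final combinatorial estimate. Because $|r_k(k-\kappa_1 b^{a_1-1})|$ equals a constant times $b^{-\mu_1(k)}$ rather than $b^{-\mu_1(k-\kappa_1 b^{a_1-1})}$, one cannot afford the crude bound $|r_k(m)|\le c_b b^{-\mu_1(m)}$ and must work with the precise Fine values; one also has to treat the edge case $\nu=1$ carefully (there $k-\kappa_1 b^{a_1-1}=0$, so the term $m=0$ enters the sum, where $\chi^{(\pm)}_{\alpha-1}(0)$ is only $O(1)$ but $\mu_\alpha(k)=a_1$ still absorbs it), and for the shifted family one needs to observe that $\mu_{\alpha-1}(m)$ already picks up the large index $a_1+a+2$, so that $|r_k(m)|b^{-\mu_{\alpha-1}(m)}$ in fact decays doubly geometrically in $a$.
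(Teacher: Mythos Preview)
Your proposal is correct and follows essentially the same approach as the paper: induction on $\alpha$, with the base case $\alpha=1$ handled via the Fine series for $J_k$ (the paper simply cites \cite[Lemma~1]{D09}), and the inductive step carried out by one integration by parts against $J_k$ followed by the three-term splitting of the support of $r_k(\cdot)$ and the estimate $\mu_1(k)+\mu_{\alpha-1}(m)\ge\mu_\alpha(k)$ on each piece. You are in fact slightly more careful than the paper---you track the complex conjugate correctly and treat the cases $k=0$ and $\nu=1$ explicitly---but the argument is the same.
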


\begin{proof}
We show the result by induction. Let $\alpha = 1$. Then $(x-t)_+^0 = 1_{[t,1]}(x)$ and $(t-x)_+^0 = 1_{[0,t]}(x)$ and therefore the result follows from
\cite[Lemma~1]{D09}. Assume now the result holds for some $\alpha
\in \mathbb{N}$. Let $k \in \mathbb{N}$,  $k = \kappa_1 b^{a_1-1} + \cdots + \kappa_\nu b^{a_\nu-1}$ and $k' = k- \kappa_1 b^{a_1-1}$ with $\kappa_1, \ldots, \kappa_\nu \in \{1,\ldots, b-1\}$, $a_1 > a_2 > \cdots > a_\nu >0$ and $0 \le k' <
b^{a_1-1}$. Then
\begin{eqnarray*}
\chi^{(+)}_{\alpha+1}(k) & = & \int_0^1 (x-t)_+^{\alpha}
\overline{\wal_k(x)} \,\mathrm{d} x \\ & = & J_k(x) (x-t)_+^{\alpha}
\mid_{x=0}^1 - \alpha \int_0^1 (x-t)_+^{\alpha-1} J_k(x)
\,\mathrm{d} x \\ & = & - \alpha \int_0^1 (x-t)_+^{\alpha-1} J_k(x)
\,\mathrm{d} x \\ & = & -\alpha \sum_{m=0}^\infty r_k(m) \int_0^1
(x-t)_+^{\alpha-1} \overline{\wal_m(x)} \,\mathrm{d} x \\ & = &
-\alpha \sum_{m=0}^\infty r_k(m) \chi^{(+)}_\alpha(m).
\end{eqnarray*}
Thus there is some constant $C > 0$ such that
\begin{equation*}
|\chi^{(+)}_{\alpha+1}(k)| \le C \alpha \left(b^{-\mu_1(k) -
\mu_{\alpha}(k')} +  b^{-\mu_1(k) - \mu_{\alpha}(k)} + b^{-\mu_1(k)
- \mu_{\alpha}(k)} \sum_{a=1}^\infty b^{-a} \right) \le C'_\alpha
b^{-\mu_{\alpha+1}(k)}.
\end{equation*}
The result for $\chi^{(-)}_{\alpha+1}$ can be shown by the same
arguments.
\end{proof}

By keeping track of the constant in Lemma~\ref{lem_bound_taylor} one can show that the constant can be chosen as $C_{1,\alpha}$ given by \eqref{const_c1}.

We now prove the following continuous embedding.
\begin{theorem}\label{thm_embedding}
Let $\alpha \in \mathbb{N}$ with $\alpha \ge 2$. There is a constant $C > 0$ such that for all $f \in
H(K_{\alpha,c})$ we have
\begin{equation*}
\|f\|_{\mathcal{W}_\alpha} \le C \|f\|_{H(K_{\alpha,c})}.
\end{equation*}
Thus we have the continuous embedding
\begin{equation*}
H(K_{\alpha,c}) \hookrightarrow \mathcal{W}_\alpha.
\end{equation*}
\end{theorem}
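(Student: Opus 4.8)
The plan is to expand $f$ at the anchor $c$ by Taylor's formula with integral remainder, take Walsh coefficients term by term, and control them with the estimates already collected above: the decay bound for $\int_0^1 (x^r/r!)\overline{\wal_k(x)}\,\mathrm{d}x$ from \cite{D09} for the polynomial part, and Lemma~\ref{lem_bound_taylor} on $\chi^{(\pm)}_\alpha(k)$ for the remainder. Since $f\in H(K_{\alpha,c})$ has $f(c)=0$, $f^{(\alpha-1)}$ absolutely continuous and $f^{(\alpha)}\in L_2[0,1]$, Taylor's theorem gives for $x>c$
\[
 f(x)=\sum_{r=1}^{\alpha-1}\frac{(x-c)^r}{r!}\,f^{(r)}(c)+\frac{1}{(\alpha-1)!}\int_0^1 1_{\{t>c\}}\,(x-t)_+^{\alpha-1}f^{(\alpha)}(t)\,\mathrm{d}t,
\]
and the analogous identity for $x<c$ with remainder $\tfrac{(-1)^\alpha}{(\alpha-1)!}\int_0^1 1_{\{t<c\}}\,(t-x)_+^{\alpha-1}f^{(\alpha)}(t)\,\mathrm{d}t$. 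Write $T(x)$ for the common polynomial part and let $R^{(+)}(x)$, $R^{(-)}(x)$ denote these two remainder expressions, now regarded as functions on all of $[0,1]$. One checks directly that $R^{(+)}$ vanishes on $(0,c)$ and $R^{(-)}$ on $(c,1)$ (the indicator forces the truncated power to vanish there), so $f=T+R^{(+)}+R^{(-)}$ almost everywhere on $[0,1]$, and hence $\widehat f(k)=\widehat T(k)+\widehat{R^{(+)}}(k)+\widehat{R^{(-)}}(k)$ for every $k\ge1$.

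For the polynomial part, expand $(x-c)^r/r!=\sum_{j=0}^r\frac{(-c)^{r-j}}{(r-j)!}\frac{x^j}{j!}$; the $j=0$ term integrates to $0$ against $\overline{\wal_k}$ for $k\ge1$, and the remaining terms are controlled by \cite{D09}, giving $\bigl|\int_0^1 \frac{(x-c)^r}{r!}\overline{\wal_k(x)}\,\mathrm{d}x\bigr|\le \widetilde C_{1,r}\,b^{-\mu_\alpha(k)}$ with $\widetilde C_{1,r}$ independent of $k$ and of $c\in[0,1]$. Combining with $\sum_{r=1}^{\alpha-1}|f^{(r)}(c)|^2\le\|f\|_{H(K_{\alpha,c})}^2$ and Cauchy--Schwarz over $r$ yields $|\widehat T(k)|\le C_\alpha\,b^{-\mu_\alpha(k)}\|f\|_{H(K_{\alpha,c})}$. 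For each remainder I use Fubini (the integrand is dominated by $|f^{(\alpha)}(t)|$, which is integrable on $[0,1]^2$) to get
\[
 \widehat{R^{(+)}}(k)=\frac{1}{(\alpha-1)!}\int_0^1 1_{\{t>c\}}\,f^{(\alpha)}(t)\,\chi^{(+)}_\alpha(k)\,\mathrm{d}t,
\]
where the suppressed parameter in $\chi^{(+)}_\alpha$ is $t$, and likewise for $R^{(-)}$ with $\chi^{(-)}_\alpha$. Lemma~\ref{lem_bound_taylor} bounds $|\chi^{(\pm)}_\alpha(k)|\le C\,b^{-\mu_\alpha(k)}$ uniformly in that parameter, and $\int_0^1|f^{(\alpha)}(t)|\,\mathrm{d}t\le\|f^{(\alpha)}\|_{L_2}\le\|f\|_{H(K_{\alpha,c})}$, so $|\widehat{R^{(\pm)}}(k)|\le C\,b^{-\mu_\alpha(k)}\|f\|_{H(K_{\alpha,c})}$. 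Adding the three estimates gives $|\widehat f(k)|\,b^{\mu_\alpha(k)}\le C\|f\|_{H(K_{\alpha,c})}$ for all $k\ge1$, and taking the supremum over $k$ proves $\|f\|_{\mathcal{W}_\alpha}\le C\|f\|_{H(K_{\alpha,c})}$, which is the asserted embedding.

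I expect the only real work to be bookkeeping: getting the Taylor remainder right on both sides of the anchor, including the sign $(-1)^\alpha$ and the observation that $R^{(+)}$ and $R^{(-)}$ extend by zero across $c$ (this is exactly what lets me recombine them into a single function whose Walsh coefficient splits cleanly), together with justifying the interchange of the $x$- and $t$-integrations. No new analytic estimate is needed: the genuine content -- the decay $b^{-\mu_\alpha(k)}$ of the Walsh coefficients of monomials and of truncated powers, uniformly in the anchor/truncation parameter -- is furnished by \cite{D09} and Lemma~\ref{lem_bound_taylor}, and it only has to be combined with the two elementary reproducing-kernel inequalities $|f^{(r)}(c)|\le\|f\|_{H(K_{\alpha,c})}$ and $\|f^{(\alpha)}\|_{L_2}\le\|f\|_{H(K_{\alpha,c})}$ that are immediate from the form of $\|\cdot\|_{H(K_{\alpha,c})}$.
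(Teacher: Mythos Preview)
Your proposal is correct and follows essentially the same approach as the paper's own proof: Taylor expansion of $f$ at the anchor $c$, splitting into polynomial part plus the two one-sided integral remainders, taking Walsh coefficients term by term, and bounding them via \cite[Lemma~3]{D09} and Lemma~\ref{lem_bound_taylor} together with Cauchy--Schwarz against the $H(K_{\alpha,c})$-norm. You are in fact slightly more careful than the paper about the bookkeeping (the sign $(-1)^\alpha$ in the left-sided remainder and the observation that each $R^{(\pm)}$ extends by zero across $c$), but the strategy is identical.
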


\begin{proof}
Let $f \in H(K_{\alpha,c})$. Then for $x \in [c,1]$ we
have the Taylor series expansion with integral remainder
$$f(x) = \langle f, K_{\alpha,c}(\cdot, x) \rangle_{H(K_{\alpha,c})} =
\sum_{r=1}^{\alpha-1} f^{(r)}(c) (x-c)^r + \int_c^1 f^{(\alpha)}(t)
\frac{(x-t)_+^{\alpha-1}}{(\alpha-1)!} \,\mathrm{d} t$$ and for $x
\in [0,c]$ we have the Taylor series expansion with integral remainder
$$f(x) = \langle f, K_{\alpha,c}(\cdot, x) \rangle_{H(K_{\alpha,c})} =
\sum_{r=1}^{\alpha-1} f^{(r)}(c) (x-c)^r + \int_0^c f^{(\alpha)}(t)
\frac{(t-x)_+^{\alpha-1}}{(\alpha-1)!} \,\mathrm{d} t.$$ Therefore
\begin{eqnarray*}
\widehat{f}(k) & = & \int_0^1 f(x) \overline{\wal_k(x)} \,\mathrm{d}
x \\ & = & \sum_{r=1}^{\alpha-1} f^{(r)}(c) \int_0^1 (x-c)^r
\overline{\wal_k(x)}\,\mathrm{d} x + \int_0^1 \int_0^1 1_{[c,1]}(t)
f^{(\alpha)}(t) \frac{(x-t)_+^{\alpha-1}}{(\alpha-1)!}
\overline{\wal_k(x)} \,\mathrm{d} t \,\mathrm{d} x \\ && + \int_0^1
\int_0^1 1_{[0,c]}(t) f^{(\alpha)}(t)
\frac{(t-x)_+^{\alpha-1}}{(\alpha-1)!} \overline{\wal_k(x)}
\,\mathrm{d} t \,\mathrm{d} x \\ & = &  \sum_{r=1}^{\alpha-1}
f^{(r)}(c) \int_0^1 (x-c)^r \overline{\wal_k(x)}\,\mathrm{d} x +
\int_0^1  1_{[c,1]}(t) f^{(\alpha)}(t) \chi_\alpha^{(+)}(k) \,\mathrm{d} t \\
&& + \int_0^1  1_{[0,c]}(t) f^{(\alpha)}(t) \chi_\alpha^{(-)}(k)
\,\mathrm{d} t.
\end{eqnarray*}
Thus, using \cite[Lemma~3]{D09} and Lemma~\ref{lem_bound_taylor}
there is some constant $C > 0$ such that
\begin{eqnarray*}
|\widehat{f}(k)| & \le & \sum_{r=1}^{\alpha-1} |f^{(r)}(c)|
\left|\int_0^1 (x-c)^r \overline{\wal_k(x)}\,\mathrm{d} x \right|
\\ && + \int_0^1 |f^{(\alpha)}(t)| \left[1_{[c,1]}(t) |\chi_\alpha^{(+)}(k)| +
1_{[0,c]}(t) |\chi_\alpha^{(-)}(k) | \right] \,\mathrm{d} t \\ & \le & C
b^{-\mu_\alpha(k)} \left(\sum_{r=1}^{\alpha-1} |f^{(r)}(c)|^2 +
\int_0^1 |f^{(\alpha)}(t)|^2 \,\mathrm{d} t \right)^{1/2} \\ & = & C
b^{-\mu_\alpha(k)} \|f\|_{H(K_{\alpha,c})},
\end{eqnarray*}
where the constant $C >0$ is independent of $k$ and $f$.
\end{proof}

One can show that the constant in Theorem~\ref{thm_embedding} can be chosen as $C_{3,\alpha} := \sqrt{\alpha} C_{1,\alpha}$, where $C_{1,\alpha}$ is given by \eqref{const_c1}.

The result can be generalized for tensor product spaces. Let $u
\subset \mathbb{N}$ be a finite set. For $\bsx_u = (x_i)_{i \in u},
\bsy_u = (y_i)_{i \in u} \in [0,1]^{|u|}$  let
\begin{equation*}
K_{\alpha,c,u}(\bsx_u,\bsy_u) = \prod_{i \in u}
K_{\alpha,c}(x_i,y_i).
\end{equation*}
This reproducing kernel defines a reproducing kernel Hilbert space
$H(K_{\alpha,c,u})$ with inner product $\langle \cdot,
\cdot \rangle_{\alpha,c,u}$ and corresponding norm $\|\cdot
\|_{\alpha,c,u}$.

For $\bsk_u = (k_i)_{i \in u} \in \mathbb{N}_0^{|u|}$ let
\begin{equation*}
\mu_{\alpha}(\bsk_u)  = \sum_{i \in u} \mu_{\alpha}(k_i).
\end{equation*}
We define the Walsh functions
\begin{equation*}
\wal_{\bsk_u}(\bsx_u) = \prod_{i\in u} \wal_{k_i}(x_i).
\end{equation*}

For $\alpha \ge 2$ we define the Walsh space $\mathcal{W}_{\alpha,u}$ as the
space of all Walsh series
\begin{equation*}
f(\bsx_u) = \sum_{\bsk_u \in \mathbb{N}^{|u|}_0} \widehat{f}(\bsk_u)
\wal_{\bsk_u}(\bsx_u)
\end{equation*}
with
\begin{equation*}
\|f\|_{\mathcal{W}_{\alpha,u}} = \sup_{\bsk_u \in \mathbb{N}^{|u|}_0}
|\widehat{f}(\bsk_u)| b^{\mu_\alpha(\bsk_u)} < \infty.
\end{equation*}

Using the representation $f(\bsx) = \langle f, K_{\alpha,c,u}
(\cdot, \bsx)
\rangle_{H(K_{\alpha,c,u})}$ one obtains a multidimensional Taylor series
with integral remainder. The $k_i$th Walsh coefficients of products
of $(x_i-c)^{r_i}$, $(x_i-t_i)_+^{\alpha-1}$ and
$(t_i-x_i)_+^{\alpha-1}$ can all be estimated by $C
b^{-\mu_\alpha(k_i)}$. Thus we obtain the following corollary.

\begin{corollary}
\label{Tau_Konvergenz}
Let $u \subset \mathbb{N}$ be a finite set. For $\alpha \ge 2$ the tensor product space
$H(K_{\alpha,c,u})$ is continuously embedded in
$\mathcal{W}_{\alpha,u}$. That is, there is a constant
$C_{4,\alpha,|u|} > 0$ such that for all $f \in
H(K_{\alpha,c,u})$ we have
\begin{equation*}
\|f\|_{\mathcal{W}_{\alpha,u}} \le C_{4,\alpha,|u|}
\|f\|_{H(K_{\alpha,c,u})}.
\end{equation*}
\end{corollary}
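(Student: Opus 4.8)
The plan is to deduce the $|u|$-dimensional estimate from the one-dimensional embedding of Theorem~\ref{thm_embedding} by a pure tensor-product argument, so that essentially no new analytic input is needed. Fix a finite $u\subset\N$ and a multi-index $\bsk_u=(k_i)_{i\in u}\in\N_0^{|u|}$, and consider the linear functional $\Phi_{\bsk_u}:f\mapsto\widehat{f}(\bsk_u)=\int_{[0,1]^{|u|}}f(\bsx_u)\overline{\wal_{\bsk_u}(\bsx_u)}\rd\bsx_u$. Since base-$b$ Walsh functions are in general complex-valued, I would work in the complexification of $H(K_{\alpha,c,u})$ (whose reproducing kernel $K_{\alpha,c,u}$ is real-valued, so the RKHS structure extends without change); there $\Phi_{\bsk_u}$ has a Riesz representer $g_{\bsk_u}$ given by $g_{\bsk_u}(\bsy_u)=\int_{[0,1]^{|u|}}K_{\alpha,c,u}(\bsx_u,\bsy_u)\overline{\wal_{\bsk_u}(\bsx_u)}\rd\bsx_u$. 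By Cauchy--Schwarz, $|\widehat{f}(\bsk_u)|\le\|g_{\bsk_u}\|_{H(K_{\alpha,c,u})}\,\|f\|_{H(K_{\alpha,c,u})}$, so it suffices to show $\|g_{\bsk_u}\|_{H(K_{\alpha,c,u})}\le C^{|u|}\,b^{-\mu_\alpha(\bsk_u)}$ for a constant $C$ depending only on $\alpha$ and $b$.

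The key step is factorization over coordinates. Because $K_{\alpha,c,u}(\bsx_u,\bsy_u)=\prod_{i\in u}K_{\alpha,c}(x_i,y_i)$ and $\wal_{\bsk_u}(\bsx_u)=\prod_{i\in u}\wal_{k_i}(x_i)$, Fubini gives $g_{\bsk_u}(\bsy_u)=\prod_{i\in u}g_{k_i}(y_i)$, where $g_{k_i}(y)=\int_0^1K_{\alpha,c}(x,y)\overline{\wal_{k_i}(x)}\rd x$ is the one-dimensional representer of $h\mapsto\widehat{h}(k_i)$ on (the complexification of) $H(K_{\alpha,c})$. Since $H(K_{\alpha,c,u})$ is the $|u|$-fold Hilbert tensor product of $H(K_{\alpha,c})$, the norm of a product of single-variable functions factorizes, so $\|g_{\bsk_u}\|_{H(K_{\alpha,c,u})}=\prod_{i\in u}\|g_{k_i}\|_{H(K_{\alpha,c})}$. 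Now the proof of Theorem~\ref{thm_embedding} goes through verbatim for complex-valued $h$ in the complexified space (the Taylor expansion via the reproducing property, the Walsh-coefficient bounds from \cite[Lemma~3]{D09} and Lemma~\ref{lem_bound_taylor}, and the final Cauchy--Schwarz are all insensitive to complexification), which gives $\|g_{k_i}\|_{H(K_{\alpha,c})}=\|\Phi_{k_i}\|\le C_{3,\alpha}\,b^{-\mu_\alpha(k_i)}$ with $C_{3,\alpha}=\sqrt{\alpha}\,C_{1,\alpha}$. Multiplying over $i\in u$ and using $\mu_\alpha(\bsk_u)=\sum_{i\in u}\mu_\alpha(k_i)$ yields $\|g_{\bsk_u}\|_{H(K_{\alpha,c,u})}\le C_{3,\alpha}^{|u|}\,b^{-\mu_\alpha(\bsk_u)}$; inserting this into the Cauchy--Schwarz bound and taking the supremum over $\bsk_u\in\N_0^{|u|}$ gives $\|f\|_{\mathcal{W}_{\alpha,u}}\le C_{3,\alpha}^{|u|}\|f\|_{H(K_{\alpha,c,u})}$, that is, the claim with $C_{4,\alpha,|u|}=C_{3,\alpha}^{|u|}$.

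An alternative, more explicit route --- matching the description preceding the corollary --- is to expand $f(\bsx_u)=\langle f,K_{\alpha,c,u}(\cdot,\bsx_u)\rangle_{H(K_{\alpha,c,u})}$ directly into a multivariate Taylor series with integral remainder, splitting $u$ according to which of the $\alpha$ ``slots'' of the one-dimensional inner product each coordinate occupies, bounding the resulting coordinatewise Walsh-coefficient integrals of $(x_i-c)^{r_i}$, $(x_i-t_i)_+^{\alpha-1}$ and $(t_i-x_i)_+^{\alpha-1}$ by $C\,b^{-\mu_\alpha(k_i)}$ via \cite[Lemma~3]{D09} and Lemma~\ref{lem_bound_taylor}, and closing with a Cauchy--Schwarz against the tensor-product inner product. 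I do not expect a genuine obstacle in either approach; the only things requiring care are the bookkeeping of the $2^{|u|}$-fold case distinction induced by the piecewise definition of $K_{\alpha,c}$ and the complex conjugates coming from base-$b$ Walsh functions. The representer argument sidesteps both, so that is the version I would write up.
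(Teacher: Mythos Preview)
Your proposal is correct and arrives at the same constant $C_{4,\alpha,|u|}=C_{3,\alpha}^{|u|}$ as the paper. The paper's argument is precisely your ``alternative route'': it expands $f(\bsx_u)=\langle f,K_{\alpha,c,u}(\cdot,\bsx_u)\rangle$ into a multivariate Taylor series with integral remainder and bounds the Walsh coefficients of the factors $(x_i-c)^{r_i}$, $(x_i-t_i)_+^{\alpha-1}$, $(t_i-x_i)_+^{\alpha-1}$ coordinatewise via \cite[Lemma~3]{D09} and Lemma~\ref{lem_bound_taylor}. Your primary argument --- factoring the Riesz representer of $f\mapsto\widehat{f}(\bsk_u)$ as a pure tensor and invoking the one-dimensional operator norm bound from Theorem~\ref{thm_embedding} --- is a cleaner abstraction of the same content: the analytic input (the $Cb^{-\mu_\alpha(k)}$ bounds) is identical, but you sidestep the $2^{|u|}$-fold case split induced by the piecewise definition of $K_{\alpha,c}$. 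There is no substantive difference in the mathematics, only in the bookkeeping.
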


The constant $C_{4,\alpha,|u|}$ can be chosen as $C_{4,\alpha,|u|} = (C_{3,\alpha})^{|u|} = \alpha^{|u|/2} (C_{1,\alpha})^{|u|}$.

Consider now a reproducing kernel of the form
\begin{equation}
\label{alpha-kern}
K_{\alpha, \bsgamma}(\bsx,\bsy) = \sum_{u \subseteq [s]} \gamma_u K_{\alpha, c, u}(\bsx_u, \bsy_u),
\end{equation}
which defines the reproducing kernel Hilbert space $H(K_{\alpha, \bsgamma})$ with inner product $\langle \cdot, \cdot \rangle_{H(K_{\alpha, \bsgamma})}$ and corresponding norm $\|\cdot \|_{H(K_{\alpha, \bsgamma})}$. Further we define the Walsh space $\mathcal{W}_{\alpha, \bsgamma}$, $\bsgamma = (\gamma_u)_{u\subseteq [s]}$, as the space of all Walsh series
\begin{equation*}
f(\bsx) = \sum_{\bsk \in \mathbb{N}_0^s} \widehat{f}(\bsk) \wal_{\bsk}(\bsx),
\end{equation*}
with finite norm
\begin{equation*}
\|f\|_{\mathcal{W}_{\alpha,\widetilde{\bsgamma}}} = \max_{u \subseteq [s]} \widetilde{\gamma}_u^{-1} \|f_u\|_{\mathcal{W}_{\alpha,u}}
\end{equation*}
where $f_u = \langle f, K_{\alpha,c,u}\rangle_{H(K_{\alpha,\bsgamma})}$ is the projection of $f$ onto $H(K_{\alpha,c,u})$. Then we have
\begin{equation*}
\|f\|_{\mathcal{W}_{\alpha,\widetilde{\bsgamma}}} \le \left( \sum_{u\subseteq [s]} \gamma_u^{-1} \|f_u\|^2_{H(K_{\alpha,c,u})} \right)^{1/2} = \|f\|_{H(K_{\alpha, \bsgamma})},
\end{equation*}
where $\widetilde{\bsgamma} = (\widetilde{\gamma}_u)_{u \subseteq [s]}$ and $\widetilde{\gamma}_u = C_{4,\alpha,|u|} \sqrt{\gamma_u}$.

\subsection{Numerical integration}
\label{NUMINT}

Let $\alpha > 1$ be an integer. The worst-case integration error in $H(K_{\alpha,c,u})$ using a quasi-Monte Carlo algorithm $Q_P(f) = \frac{1}{|P|} \sum_{\bsx \in P} f(\bsx)$ based on the point set $P = \{\bsx_0,\ldots, \bsx_{N-1}\} \subset [0,1]^{u}$ is given by
\begin{equation*}
e(Q; H(K_{\alpha,c,u})) = \sup_{f \in
H(K_{\alpha,c,u}), \|f\|_{H(K_{\alpha,c,u})} \le 1}
\left|\int_{[0,1]^u} f(\bsx_u) \,\mathrm{d} \bsx_u - \frac{1}{N}
\sum_{n=0}^{N-1} f(\bsx_n) \right|.
\end{equation*}

Since the reproducing kernel Hilbert space
$H(K_{\alpha,c,u})$ is continuously embedded in
$\mathcal{W}_{\alpha,u}$, the results on numerical integration of
\cite{BDGP11} in $\mathcal{W}_{\alpha,u}$ apply. From
\cite[Theorem~3.1]{BDGP11} we obtain the following result which will be used in the changing dimension algorithm.

\begin{proposition}\label{prop1}
Let $b$ be a prime number, $m \ge 1$ and $\alpha \ge 2$ be integers. Then a higher order polynomial lattice point set $\cS_{p,m,\alpha}(\bsq)$ with modulus $p$ of degree $\alpha m$ constructed over the finite field $\mathbb{Z}_b$ of order $b$
and generating vector $\bsg \in \mathbb{Z}_b^{|u|}$ can be constructed component-by-component such that the quasi-Monte Carlo rule $Q_{\bsg,p}$ using the quadrature points $\cS_{p,m,\alpha m}(\bsq)$ satisfies
\begin{equation}
\label{estimate-for-CDA}
e(Q_{\bsg,p}; H(K_{\alpha,c,u})) \le \frac{1}{b^{\tau m}}
C_{b,\alpha,1/\tau}^{|u| \tau} \quad \mbox{for all } 1 \le \tau <
\alpha.
\end{equation}
The constant here is given by
\begin{equation*} C_{b,\alpha,1/\tau} := 1 + C_{3,\alpha} \left(\widetilde{C}_{b,\alpha,1/\tau} + \frac{(b-1)^\alpha}{b^{\alpha/\tau} -b} \prod_{j=1}^{\alpha -1} \frac{1}{b^{j/\tau}-1} \right),
\end{equation*}
where $C_{3,\alpha}$ is as in Section \ref{EMBEDDING} and
\begin{equation*}
\widetilde{C}_{b,\alpha,1/\tau} := \left\{
\begin{array}{ll}
\alpha-1 & \mbox{ if } \tau =1,\\
\frac{(b-1)((b-1)^{\alpha-1} - (b^{1/\tau} -1)^{\alpha-1})}{(b-b^{1/\tau})(b^{1/\tau}-1)^{\alpha-1}}& \mbox{ if } \tau > 1 .
\end{array}\right.
\end{equation*}
\end{proposition}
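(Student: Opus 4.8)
The plan is to derive Proposition~\ref{prop1} by combining the continuous embedding of Corollary~\ref{Tau_Konvergenz} with the component-by-component construction of higher order polynomial lattice rules from \cite[Theorem~3.1]{BDGP11}. First I would use Corollary~\ref{Tau_Konvergenz}: since $H(K_{\alpha,c,u})$ is continuously embedded in $\mathcal{W}_{\alpha,u}$ with embedding constant $C_{4,\alpha,|u|} = (C_{3,\alpha})^{|u|}$, the unit ball of $H(K_{\alpha,c,u})$ is contained in the ball of radius $(C_{3,\alpha})^{|u|}$ of $\mathcal{W}_{\alpha,u}$, and since both the integral and the quadrature rule are continuous linear functionals on $\mathcal{W}_{\alpha,u}$ it follows that
\begin{equation*}
e(Q;H(K_{\alpha,c,u})) \;\le\; (C_{3,\alpha})^{|u|}\, e(Q;\mathcal{W}_{\alpha,u})
\end{equation*}
for every quasi-Monte Carlo rule $Q$ on $[0,1)^{u}$; equivalently, one may view this as an embedding into the weighted Walsh space of the end of Section~\ref{EMBEDDING} in which the block $u$ carries weight $(C_{3,\alpha})^{|u|}=\prod_{i\in u}C_{3,\alpha}$, i.e.\ each coordinate carries weight $C_{3,\alpha}$. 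It therefore suffices to construct a higher order polynomial lattice rule that is good in this (weighted) Walsh space.

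Second I would invoke \cite[Theorem~3.1]{BDGP11} in this setting (coordinate weight $C_{3,\alpha}$ on the block $u$, smoothness $\alpha\ge 2$). It yields, via a component-by-component search over generating vectors $\bsg\in\mathbb{Z}_b^{|u|}$ with an irreducible modulus $p$ of degree $\alpha m$, a higher order polynomial lattice rule $Q_{\bsg,p}$ on the $b^m$ points of $\cS_{p,m,\alpha m}(\bsg)$ whose worst-case error obeys a bound of the form $b^{-\tau m}$ times the $\tau$-th power of a quantity that the CBC argument drives down to $b^{-m}\prod_{i\in u}\bigl(1 + C_{3,\alpha}\,\Sigma_{b,\alpha,\tau}\bigr)$. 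The appearance of the $\tau$-th power is the usual convexity step bounding the dual-net sum $\sum(\cdot)$ by $\bigl(\sum(\cdot)^{1/\tau}\bigr)^{\tau}$ (valid for $\tau\ge 1$), and $\Sigma_{b,\alpha,\tau}$ is the value of a univariate $b$-adic series encoding the decay of the Walsh coefficients of $K_{\alpha,c}$ (controlled by Lemma~\ref{lem_bound_taylor} and \cite[Lemma~3]{D09}). Evaluating that series in closed form — splitting the Walsh-index contributions into the part coming from the polynomial (Taylor) component of $K_{\alpha,c}$, which produces $\widetilde{C}_{b,\alpha,1/\tau}$ (its value $\alpha-1$ at $\tau=1$ simply counts the $\alpha-1$ Taylor terms, and the $\tau>1$ value emerges from a finite geometric sum), and the part coming from the integral-remainder component, which telescopes to $\tfrac{(b-1)^\alpha}{b^{\alpha/\tau}-b}\prod_{j=1}^{\alpha-1}\tfrac{1}{b^{j/\tau}-1}$ — gives $\Sigma_{b,\alpha,\tau} = \widetilde{C}_{b,\alpha,1/\tau} + \tfrac{(b-1)^\alpha}{b^{\alpha/\tau}-b}\prod_{j=1}^{\alpha-1}\tfrac{1}{b^{j/\tau}-1}$. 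The denominator $b^{\alpha/\tau}-b$ is positive exactly when $\tau<\alpha$ and degenerates as $\tau\uparrow\alpha$, which explains both the restriction $1\le\tau<\alpha$ and the absence of any logarithmic factor below the endpoint.

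Finally I would assemble the pieces: since all $|u|$ univariate factors coincide, the product is $(1 + C_{3,\alpha}\Sigma_{b,\alpha,\tau})^{|u|}$, and raising to the $\tau$-th power turns it into $(1 + C_{3,\alpha}\Sigma_{b,\alpha,\tau})^{|u|\tau}=C_{b,\alpha,1/\tau}^{\,|u|\tau}$ with $C_{b,\alpha,1/\tau}=1+C_{3,\alpha}\Sigma_{b,\alpha,\tau}$ exactly as displayed. Together with the prefactor $b^{-\tau m}$ and the embedding inequality above this is the claimed estimate
\begin{equation*}
e(Q_{\bsg,p};H(K_{\alpha,c,u})) \;\le\; \frac{1}{b^{\tau m}}\,C_{b,\alpha,1/\tau}^{\,|u|\tau}.
\end{equation*}
I expect the main obstacle to be the explicit constant bookkeeping rather than any conceptual point: one must verify that the constant delivered by \cite[Theorem~3.1]{BDGP11}, once the Walsh-decay constant $C_{3,\alpha}=\sqrt{\alpha}\,C_{1,\alpha}$ of Section~\ref{EMBEDDING} is carried through as the per-coordinate weight and the $|u|$- and $\tau$-exponents are reorganised, coincides exactly with $1+C_{3,\alpha}\Sigma_{b,\alpha,\tau}$, and in particular that the univariate $b$-adic Walsh series sums to precisely $\widetilde{C}_{b,\alpha,1/\tau} + \tfrac{(b-1)^\alpha}{b^{\alpha/\tau}-b}\prod_{j=1}^{\alpha-1}\tfrac{1}{b^{j/\tau}-1}$; everything else (the embedding, the CBC recursion, the convexity step) is routine once \cite[Theorem~3.1]{BDGP11} is available.
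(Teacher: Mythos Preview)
Your approach is correct and matches the paper's: the proposition is stated as a direct consequence of the continuous embedding $H(K_{\alpha,c,u})\hookrightarrow\mathcal{W}_{\alpha,u}$ from Corollary~\ref{Tau_Konvergenz} together with \cite[Theorem~3.1]{BDGP11}, exactly as you outline. One small correction to your heuristic: the two summands $\widetilde{C}_{b,\alpha,1/\tau}$ and $\tfrac{(b-1)^\alpha}{b^{\alpha/\tau}-b}\prod_{j=1}^{\alpha-1}\tfrac{1}{b^{j/\tau}-1}$ do not correspond to the Taylor versus integral-remainder parts of $K_{\alpha,c}$ but rather to the partition of Walsh indices $k$ according to whether $k$ has fewer than $\alpha$ or at least $\alpha$ nonzero base-$b$ digits in the sum $\sum_{k\ge 1} b^{-\mu_\alpha(k)/\tau}$; this does not affect your argument, since you correctly defer the exact constant computation to \cite{BDGP11}.
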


Note that one does not require a random digital shift of the polynomial lattice point set in Proposition~\ref{prop1} due to the embedding of the function space $H(K_{\alpha,c,u})$ in the Walsh space. This random digital shift is however required for $\alpha = 1$ to get a corresponding result (which is not covered in Proposition~\ref{prop1}).

The construction cost of the component-by-component algorithm is of $O(|u| N^\alpha \alpha \log N)$ operations using $O(N^\alpha)$ memory (where $N = b^m$ is the number of points), see \cite{BDLNP12}.

Consider now a reproducing kernel of the form (\ref{alpha-kern}).
For functions $f \in H(K_{\alpha,\bsgamma})$ with anchored decomposition $f= \sum_{u\subseteq [s]} f_u = \sum_{u \subseteq [s]} \sum_{\bsk_u \in \mathbb{N}_0^{|u|}} \widehat{f}_u(\bsk_u) \wal_{\bsk_u}$
we have
\begin{align*}
\left|\int_{[0,1]^s} f(\bsx)\,\rd \bsx - \frac{1}{b^m} \sum_{n=0}^{b^m-1} f(\bsx_n) \right| \le & \|f\|_{\mathcal{W}_{\alpha,\widetilde{\bsgamma}}} \sum_{\emptyset \neq u \subseteq [s]} \widetilde{\gamma}_u \frac{1}{b^m} \sum_{n=0}^{b^m-1} \sum_{\bsk_u \in \mathbb{N}_0^{|u|} \setminus \{\bszero\}} b^{-\mu_\alpha(\bsk_u)} \wal_{\bsk_u}(\bsx_{n,u}) \\ \le & \|f\|_{\mathcal{W}_{\alpha,\widetilde{\bsgamma}}} \sum_{\emptyset \neq u \subseteq [s]}  \frac{1}{b^m} \sum_{n=0}^{b^m-1} \widetilde{\gamma}'_u \sum_{\bsk_u \in \mathbb{N}^{|u|}} b^{-\mu_\alpha(\bsk_u)} \wal_{\bsk_u}(\bsx_{n,u}) \\ \le & \|f\|_{H(K_{\alpha,\bsgamma})} \sum_{\emptyset \neq u \subseteq [s]}  \frac{1}{b^m} \sum_{n=0}^{b^m-1} \widetilde{\gamma}'_u \sum_{\bsk_u \in \mathbb{N}^{|u|}} b^{-\mu_\alpha(\bsk_u)} \wal_{\bsk_u}(\bsx_{n,u}),
\end{align*}
where $\widetilde{\gamma}_u = C_{3,\alpha}^{|u|} \sqrt{\gamma_u}$ and
$\widetilde{\gamma}'_u = \sum_{u \subseteq v\subseteq [s]} \widetilde{\gamma}_v$ (note that $\frac{1}{b^m} \sum_{n=0}^{b^m-1} \wal_{\bsk_u}(\bsx_{n,u})$ only takes on the values $0$ or $1$). Let $\gamma'_u = \sum_{v \supseteq u} \gamma_v$. Using a slight generalization of \cite[Theorem~3.1]{BDGP11} we obtain that a higher order polynomial
lattice point set $\cS_{p,m,\alpha m}(\bsq)$ with modulus $p$ of degree $\alpha m$
and generating vector $\bsg$ can be constructed
component-by-component  such that the quasi-Monte Carlo rule $Q_{\bsg,p}$ using the quadrature points $\cS_{p,m,\alpha m}(\bsq)$ satisfies
\begin{align*}
e(Q_{\bsg,p};H(K_{\alpha,\bsgamma})) \le & \frac{1}{b^{\tau m}}
\left( \sum_{u \subseteq [s]} (\widetilde{\gamma}'_u)^{1/(\tau)} (2 C_{b,\alpha,1/\tau})^{|u|}
\right)^\tau \nonumber \\ \le & \frac{1}{b^{\tau m}}
\left( \sum_{u \subseteq v \subseteq [s]} \gamma_v^{1/(2\tau)} C_{3,\alpha}^{|v|/\tau} (2 C_{b,\alpha,1/\tau})^{|u|}
\right)^\tau \nonumber \\ = & \frac{1}{b^{\tau m}}
\left( \sum_{v \subseteq [s]} \gamma_v^{1/(2\tau)} C_{3,\alpha}^{|v|/\tau} (1 + 2 C_{b,\alpha,1/\tau})^{|v|} \right)^\tau \quad \mbox{for all } 1 \le \tau < \alpha.
\end{align*}
Note that the construction above is explicit, however, the range of $\tau$ is restricted to $1 \le \tau < \alpha$. In the following we therefore consider the range $1/2 \le \tau < 1$. If one chooses $1/2 \le \tau < 1$, then one can use the construction of polynomial lattice rules from \cite{DKPS} to obtain the result that there exists a digital shift $\bssigma \in [0,1)^s$ such that
\begin{align}\label{eq_error_tauhalb}
e(Q_{\bsg,p}(\bssigma);H(K_{1,\bsgamma})) \le &  \frac{1}{b^{\tau m}}
\left( \sum_{u \subseteq [s]} \gamma_u^{1/(2\tau)} (C'_{\tau})^{|u|}
\right)^\tau \quad \mbox{for all } 1/2 \le \tau < 1,
\end{align}
for some suitable constant $C'_\tau > 0$ independent of $s$ and $m$. Note that the space $H(K_{\alpha,\bsgamma})$ is continuously embedded in the space $H(K_{\alpha-1, \bsgamma'})$, where $\bsgamma' = (2^{|u|} \gamma_u)_{u \subseteq [s]}$. This follows from the tensor product structure of the reproducing kernel Hilbert spaces $H(K_{\alpha,c,u})$ and
\begin{equation*}
\frac{1}{2} \int_0^1 |f^{(\alpha-1)}(x)|^2 \rd x \le |f^{(\alpha-1)}(c)|^2 + \int_0^1 |f^{(\alpha)}(x)|^2 \rd x,
\end{equation*}
which in turn follows from
\begin{equation*}
f^{(\alpha-1)}(x) = f^{(\alpha-1)}(c) + \int_c^x f^{(\alpha)}(t) \rd t,
\end{equation*}
for $x \ge c$ and an analogous expression for $x < c$. Thus functions in $H(K_{\alpha,\bsgamma})$ are also in $H(K_{1,\bsgamma''})$, where $\bsgamma'' = (2^{(\alpha-1) |u|} \gamma_u)_{u \subseteq [s]}$. Therefore \eqref{eq_error_tauhalb} applies for functions in $H(K_{\alpha,\bsgamma})$ where one replaces the constant $C'_\tau$ with $2^{\frac{\alpha -1}{2\tau}} C'_\tau$.

Note that we have
\begin{align*}
& [e(Q_{\bsg,p}(\bssigma); H(K_{\alpha,\bsgamma}))]^2 \\  = & \sum_{u \subseteq [s-1]} \gamma_u \left[ e \left( \left( Q_{\bsg, p}(\bssigma) \right)_{u}; H(K_{\alpha,c,u}) \right) \right]^2 + \sum_{s \in u \subseteq [s]} \gamma_u \left[ e \left( \left(
Q_{\bsg, p}(\bssigma) \right)_{u}; H(K_{\alpha,c,u}) \right) \right]^2.
\end{align*}
In the component-by-component algorithm one updates the components $g_j$ of $\bsg$ inductively. The first sum over all subsets $u \subseteq [s-1]$ does not depend on the last component and is therefore fixed when updating $g_s$. The component-by-component algorithm then minimizes the second sum over all subsets $s \in u \subseteq [s]$ and this sum is then shown to satisfy the bound
\begin{equation}\label{genau-richtig}
 \sum_{s \in u \subseteq [s]} \gamma_u \left[ e \left( \left(
Q_{\bsg, p}(\bssigma) \right)_{u}; H(K_{\alpha,c,u}) \right) \right]^2 \le  \frac{1}{b^{2 \tau m}} \left( \sum_{s \in u \subseteq [s]} \gamma_u^{1/(2\tau)} (C'_{\tau})^{|u|}
\right)^{2\tau }.
\end{equation}
This implies that for any $1/2 \le \tau < \alpha$ there is a polynomial lattice rule together with a digital shift $\bssigma$ such that
(\ref{genau-richtig}) holds. For $\tau \ge 1$ one can choose the digital shift $\bssigma = \bszero$.

Such polynomial lattice rules can be constructed using a component-by-component algorithm as shown in \cite{DKPS} for $1/2 \le \tau < \alpha = 1$ and in \cite{BDGP11} for $1 \le \tau < \alpha$.

\subsection{Results for product and order-dependent weights}
\label{HOC-POD}

\subsubsection{Nested subspace sampling}
\label{HOC-POD-NEST}

Let $\$(k)=O(k^s)$ for some $s\ge 0$. Let $\bsgamma = (\gamma_u)_{u\in \U}$ be POD weights that satisfy the assumptions of
Corollary \ref{Corollary} and have $\decay_{\bsgamma,1} >1$. For $k\in\N$ and the set 
$v_k = v_k^{(2)} = [L_k]$, see Section \ref{MLA},
we may apply estimate (\ref{genau-richtig}) to see that our assumption (\ref{assumption3.9}) holds.\footnote{Recall that polynomial lattice rules consist
of $n$ points, where $n$ is a power of a prime $b$. If required to construct a quadrature rule consisting of $n$ points, $n\in\N$ arbitrary, we generate a polynomial lattice rules consisting of $b^m$ points, $b^m\le n <b^{m+1}$, and set the quadrature weights corresponding to the ``missing'' $n-b^m$ points simply to zero.}
Thus the estimates from Theorem \ref{Theorem6} for $p^{\nes}$ can be established
by multilevel algorithms using as
building blocks the polynomial lattice rules explained above.
Due to the fact that $e(n;H(K_{\alpha,c})) = \Omega(n^{-\alpha})$ and our lower bound (\ref{neslowboupod})  we get, in particular, the following result.

\begin{corollary}\label{Cor_Theorem6}
Let $\$(k)=\Theta(k^s)$ for some $s\ge 0$. Let $\bsgamma = (\gamma_u)_{u\in \U}$ be POD weights that satisfy the assumptions of
Corollary \ref{Corollary}. Let $\alpha \ge 1$ be an integer.
Then our quasi-Monte Carlo multilevel algorithms $Q^{\ML}_m$,
defined as in (\ref{multilevel-algo}) with polynomial
lattice rules as in Section \ref{NUMINT} as quadrature rules $Q_{v_k}$,  establish the following result: The infinite-dimensional integration problem is strongly tractable in the nested subspace sampling model.

In the case where $s\ge (2\alpha-1)/2\alpha$ we obtain
\begin{equation}
p^{\nes} = \max \left\{ \frac{1}{\alpha}, \frac{2s}{\decay_{\bsgamma,1} - 1} \right\}.
\end{equation}

In the case where $0 \le s < (2\alpha-1)/2\alpha$, we obtain for
\begin{itemize}
\item[] $\decay_{\bsgamma,1} \ge 2\alpha$:
\begin{equation*}
p^{\nes} = \frac{1}{\alpha},
\end{equation*}
\item[] $2\alpha > \decay_{\bsgamma,1} > 1/(1-s)$:
\begin{equation*}
\max \left\{ \frac{1}{\alpha}, \frac{2s}{\decay_{\bsgamma,1}-1} \right\} \le p^{\nes} \le \frac{2}{\decay_{\bsgamma,1}},
\end{equation*}
\item[] $1/(1-s) \ge \decay_{\bsgamma,1} >1$:
\begin{equation*}
p^{\nes} = \frac{2s}{\decay_{\bsgamma,1}-1}.
\end{equation*}
\end{itemize}
\end{corollary}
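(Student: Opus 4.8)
The plan is to assemble the statement from three ingredients already in place: the multilevel upper bound of Theorem~\ref{Theorem6}, the lower bound \eqref{neslowboupod}, and the explicit quasi-Monte Carlo construction of Section~\ref{NUMINT}. First I would check that the building blocks $Q_{v_k}$ in the multilevel algorithm \eqref{multilevel-algo} may be taken to be the (possibly digitally shifted) higher order polynomial lattice rules $Q_{\bsg,p}(\bssigma)$ from Section~\ref{NUMINT} and that these satisfy assumption \eqref{assumption3.9} with the constant of the product form \eqref{cktaugamma}. This is essentially a re-reading of \eqref{genau-richtig}: after renumbering variables so that $v_k=[\max v_k]$ (as is done throughout Section~\ref{MLA}), inequality \eqref{genau-richtig} with $\ell\in v_k\setminus v_{k-1}$ in the role of the ``last'' coordinate, $n_k=b^m$ nodes, and constant $C_\tau:=C'_\tau$ — which by \eqref{genau-richtig} is independent of the dimension and of $m$, hence of the level $k$ and of $\ell$ — is exactly \eqref{assumption3.9}. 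The admissible range $1/2\le\tau<\alpha$ in \eqref{genau-richtig} is compatible with the range $\tau\in[1/2,\min\{\alpha,\decay_{\bsgamma}/2\})$ needed in Theorem~\ref{Theorem6}, and the padding device described in the footnote to Section~\ref{HOC-POD-NEST} (extend a $b^m$-point rule, $b^m\le n<b^{m+1}$, by zero weights) produces rules with an arbitrary number $n\in\N$ of nodes. Hence all hypotheses of Theorem~\ref{Theorem6} hold.

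Second, Theorem~\ref{Theorem6} then yields strong tractability in the nested subspace sampling model and the stated upper bounds on $p^{\nes}$ verbatim: $\max\{1/\alpha,\,2s/(\decay_{\bsgamma,1}-1)\}$ when $s\ge(2\alpha-1)/2\alpha$, and $1/\alpha$, $2/\decay_{\bsgamma,1}$, or $2s/(\decay_{\bsgamma,1}-1)$ in the three sub-cases of $0\le s<(2\alpha-1)/2\alpha$. For the matching lower bound I would invoke \eqref{univariate_convergence}, i.e.\ $e(n;H(K_{\alpha,c}))=\Omega(n^{-\alpha})$, so that assumption \eqref{assuni} holds with $\beta=\alpha$; combined with $\decay_{\bsgamma,1}=\decay_{\bsgamma}>1$ for POD weights (Lemma~\ref{Lemma3.8}, Theorem~\ref{Corollary2}), the bound \eqref{neslowboupod} gives $p^{\nes}\ge\max\{1/\alpha,\,2s/(\decay_{\bsgamma,1}-1)\}$.

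Finally I would match upper and lower bounds regime by regime. If $s\ge(2\alpha-1)/2\alpha$ the two bounds coincide, giving the equality. If $0\le s<(2\alpha-1)/2\alpha$ and $\decay_{\bsgamma,1}\ge2\alpha$, then $2s/(\decay_{\bsgamma,1}-1)\le 2s/(2\alpha-1)<1/\alpha$ (the last step is $s<(2\alpha-1)/2\alpha$), so the lower bound is $1/\alpha$ and meets the upper bound. If $1/(1-s)\ge\decay_{\bsgamma,1}>1$, then $\decay_{\bsgamma,1}-1\le s/(1-s)$, hence $2s/(\decay_{\bsgamma,1}-1)\ge 2(1-s)>1/\alpha$ (again using $s<(2\alpha-1)/2\alpha$), so the lower bound is $2s/(\decay_{\bsgamma,1}-1)$ and meets the upper bound. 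In the remaining case $2\alpha>\decay_{\bsgamma,1}>1/(1-s)$ the bounds only sandwich $p^{\nes}$ between $\max\{1/\alpha,\,2s/(\decay_{\bsgamma,1}-1)\}$ and $2/\decay_{\bsgamma,1}$, which is precisely the asserted conclusion. The only genuinely non-routine step is the first one — verifying that the explicit polynomial lattice construction delivers \eqref{assumption3.9} with a constant of the uniform product form \eqref{cktaugamma} — and this has already been packaged into \eqref{genau-richtig}; everything after that is bookkeeping with the elementary inequalities just displayed.
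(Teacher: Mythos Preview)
Your proposal is correct and follows essentially the same approach as the paper: verify via \eqref{genau-richtig} that the polynomial lattice rules satisfy assumption \eqref{assumption3.9}, apply Theorem~\ref{Theorem6} for the upper bounds, and combine \eqref{univariate_convergence} with \eqref{neslowboupod} for the lower bounds. The paper does exactly this in the paragraph preceding the corollary and leaves the regime-by-regime matching implicit, whereas you spell out the elementary inequalities that pin down when the lower and upper bounds coincide; this added detail is correct and harmless.
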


\subsubsection{Unrestricted subspace sampling}
\label{USSHIGHERORDER}

If the cost function satisfies $\$(k) = O(k^s)$ for $0\le s\le 1$, we can use
the quasi-Monte Carlo multilevel algorithms from Section \ref{HOC-POD-NEST} and achieve the
same result as in Corollary~\ref{Cor_Theorem6}.
If $\$(k) = \Omega(k^s)$ for $s \ge 1$, we can use changing dimension algorithms
as in (\ref{def-CD}) with polynomial lattices rules as in Proposition \ref{prop1}.
Due to Corollary \ref{Cor_Theorem6} and Theorem \ref{Thm_hoc_cda} these QMC multilevel
and changing dimension algorithms lead to
the following result.

\begin{corollary}
Let $\$(k) =\Theta(k^s)$ for some $s\ge 0$.
Let $\bsgamma = (\gamma_u)_{u\in \U}$ be POD weights that satisfy the assumptions of
Corollary \ref{Corollary}. Let $\alpha > 1$ be an integer.

If $s\ge (2\alpha - 1)/2\alpha$, then
the infinite-dimensional integration problem is strongly tractable with exponent
\begin{equation*}
p^{\unr} = \max \left\{ \frac{1}{\alpha},
\frac{2 \min\{1,s\}}{\decay_{\bsgamma,1}-1} \right\}.
\end{equation*}

If $s < (2\alpha - 1)/2\alpha$, then
the infinite-dimensional integration problem is strongly tractable and
$p^{\unr}$ satisfies the same relations as $p^{\nes}$ in Corollary
\ref{Cor_Theorem6}.
\end{corollary}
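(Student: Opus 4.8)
The plan is to split the argument according to the growth of the cost function $\$$ and to invoke the abstract results already established, checking only that the higher order polynomial lattice rules of Section~\ref{NUMINT} furnish admissible building blocks in each case.

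\emph{The case $0\le s\le 1$.} Here I would use the quasi-Monte Carlo multilevel algorithms of Section~\ref{HOC-POD-NEST}. By the remark following \eqref{costML}, the cost bound of $Q^{\ML}_m$ in the unrestricted subspace sampling model coincides with the one in the nested model, so every upper bound on $p^{\nes}$ supplied by Corollary~\ref{Cor_Theorem6} is simultaneously an upper bound on $p^{\unr}$. The matching lower bound comes from \eqref{neslowboupod}, which (with $\beta=\alpha$, licensed by \eqref{univariate_convergence}, and $\min\{1,s\}=s$ for $s\le 1$) reads $p^{\unr}\ge\max\{1/\alpha,\,2s/(\decay_{\bsgamma,1}-1)\}$ --- the very lower bound Corollary~\ref{Cor_Theorem6} uses for $p^{\nes}$. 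Hence in this regime $p^{\unr}$ satisfies exactly the relations stated there for $p^{\nes}$; in particular, when $(2\alpha-1)/2\alpha\le s\le 1$ the upper bound $p^{\nes}=\max\{1/\alpha,\,2s/(\decay_{\bsgamma,1}-1)\}$ is met from below and pins down $p^{\unr}=\max\{1/\alpha,\,2\min\{1,s\}/(\decay_{\bsgamma,1}-1)\}$, while for $s<(2\alpha-1)/2\alpha$ a short check (using $\decay_{\bsgamma,1}\ge 2\alpha$, resp.\ $\decay_{\bsgamma,1}\le 1/(1-s)$, together with $s<(2\alpha-1)/2\alpha$) shows that the sandwich reproduces the three sub-cases of Corollary~\ref{Cor_Theorem6}.

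\emph{The case $s\ge 1$.} Then automatically $s\ge(2\alpha-1)/2\alpha$ and $\min\{1,s\}=1$. Here I would employ the changing dimension algorithm \eqref{def-CD} with the higher order polynomial lattice rules of Proposition~\ref{prop1} as building blocks $Q_{n_u,u}$. The key point is that \eqref{estimate-for-CDA}, after squaring and after the routine passage from $b^m$ to an arbitrary number of points $n$ (generate $b^m\le n<b^{m+1}$ points and set the surplus quadrature weights to zero, as in the footnote of Section~\ref{HOC-POD-NEST}), is of the form \eqref{CDA-Bausteine} with $\lambda_1=0$ --- so the logarithmic factor is trivial --- and with $\tau$ ranging over all of $[1,\alpha)$, hence arbitrarily close to $\alpha$. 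Since $\decay_{\bsgamma}=\decay_{\bsgamma,\infty}=\decay_{\bsgamma,1}>1$ by Theorem~\ref{Corollary2}, since $\$(d)=\Theta(d^s)=\Omega(d)$, and since $e(n;H(K_{\alpha,c}))=\Omega(n^{-\alpha})$ by \eqref{univariate_convergence}, the final assertion of Theorem~\ref{Thm_hoc_cda} applies verbatim and yields $p^{\unr}=\max\{1/\alpha,\,2/(\decay_{\bsgamma,1}-1)\}=\max\{1/\alpha,\,2\min\{1,s\}/(\decay_{\bsgamma,1}-1)\}$.

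Combining the two cases proves the corollary: for $s\ge(2\alpha-1)/2\alpha$ the exact value of $p^{\unr}$ is delivered by Corollary~\ref{Cor_Theorem6} when $s\le 1$ and by Theorem~\ref{Thm_hoc_cda} when $s\ge 1$; for $s<(2\alpha-1)/2\alpha$ (which in particular forces $s<1$) the multilevel analysis of the first case shows that $p^{\unr}$ obeys precisely the relations stated for $p^{\nes}$ in Corollary~\ref{Cor_Theorem6}. The only non-routine step I anticipate is the bookkeeping that turns the quasi-Monte Carlo error bound \eqref{estimate-for-CDA} into the hypothesis \eqref{CDA-Bausteine} of Theorem~\ref{Thm_hoc_cda}: one has to observe that for each fixed admissible $\tau$ the $\tau$-dependent constant $C_{b,\alpha,1/\tau}^{2\tau}$ can be absorbed into the constant $C$ of \eqref{CDA-Bausteine}, and that restricting $n$ to powers of $b$ costs nothing in the exponent.
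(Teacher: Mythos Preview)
Your proposal is correct and follows exactly the approach of the paper: split into $0\le s\le 1$ (multilevel algorithms from Section~\ref{HOC-POD-NEST}, yielding the relations of Corollary~\ref{Cor_Theorem6}) and $s\ge 1$ (changing dimension algorithms built from Proposition~\ref{prop1}, to which Theorem~\ref{Thm_hoc_cda} applies). You supply more justification than the paper itself does---in particular the verification that \eqref{estimate-for-CDA} yields \eqref{CDA-Bausteine} with $\lambda_1=0$ and the use of $\decay_{\bsgamma}=\decay_{\bsgamma,1}$ via Theorem~\ref{Corollary2}---but the route is identical.
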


\subsection{Results for weights with finite algorithmic dimension}

Let us briefly mention the results that our quasi-Monte Carlo multilevel and
changing dimension algorithms achieve in the case of weights with finite algorithmic
dimension.

We now show how quadrature rules which satisfy \eqref{assuconv} can be constructed explicitly.  Choose $\bst^{(i,n)}$ in \eqref{algobaustein-FAD} to be the first $n$ points of a $(t,\alpha,d)$-sequence as constructed in \cite{D08}. The weights $a_i^{(n)}$ can be chosen in the following way: Let $m$ be an integer such that $b^m \le n < b^{m+1}$. Then set $a_i^{(n)} = b^{-m}$ for $1 \le i \le b^m$ and $0$ for $b^m < i \le n$. Then \cite[Theorem~5.4]{D08} together with Corollary~\ref{Tau_Konvergenz} implies that this quadrature rule satisfies \eqref{assuconv}. In the following two theorems let $Q_n$ denote the higher order quasi-Monte Carlo rule as described in this paragraph.

\subsubsection{Nested subspace sampling}

Due to Theorem \ref{UppBouFAD} we obtain the following corollary.

\begin{corollary}\label{Vorletztes_Korollar}
Let $\$(k) = O(k^s)$ for some $s\ge 0$. Let the weights $\bsgamma$ have finite
algorithmic dimension, and let $\decay_{\bsgamma} >1$. Let $\alpha \ge 1$ be an integer. Then for all $n \in \N$ the higher order quasi-Monte Carlo rules $Q_n$ satisfies (\ref{assuconv}). For $k=1,2,\ldots$,
let $Q_{v_k} = Q^{\mathcal{W}}_{n,v_k}$ be as in (\ref{algoW}).
Then the multilevel algorithms $Q^{\ML}_m$, defined as in (\ref{multilevel-algo}),
establish the following result:
The exponent of strong tractability in the nested subspace sampling
model satisfies
\begin{equation*}
p^{\nes} \le \max \left\{ \frac{1}{\alpha}, \frac{2s}{\decay_{\bsgamma} - 1} \right\}.
\end{equation*}
\end{corollary}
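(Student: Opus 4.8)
The plan is to reduce the statement to Theorem \ref{UppBouFAD} by checking its one nontrivial hypothesis — that the explicitly described rules $Q_n$ satisfy (\ref{assuconv}) with exponent $\alpha$ — and then to invoke that theorem verbatim. Everything else in the corollary is immediate: by hypothesis $\bsgamma$ has finite algorithmic dimension and $\decay_{\bsgamma}>1$, the cost function is $\$(k)=O(k^s)$, and the building blocks $Q_{v_k}=Q^{\mathcal{W}}_{n,v_k}$ are precisely of the form (\ref{algoW}). So the whole proof is: (i) verify (\ref{assuconv}); (ii) quote Theorem \ref{UppBouFAD}.

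First I would verify (\ref{assuconv}) for a fixed nonempty $u\subseteq[d]$. The rule $(Q_n)_u$ integrates $f_u\in H(K_{\alpha,c,u})$ using the projection onto the coordinates of $u$ of the first $n$ points of the $(t,\alpha,d)$-sequence from \cite{D08}, with equal weights $b^{-m}$ on the first $b^m$ of them (where $b^m\le n<b^{m+1}$) and zero weights on the rest. Since the sequence is digital, its projection to the $|u|$ coordinates is again a digital sequence of order $\alpha$ with quality parameter no larger than $t$, so \cite[Theorem~5.4]{D08} applies in dimension $|u|$ and bounds the worst-case error of $(Q_n)_u$ in the Walsh space $\mathcal{W}_{\alpha,u}$ by an expression of the form $\widetilde c\,\widetilde C^{|u|}(n+1)^{-\alpha}(1+\ln(n+1))^{\widetilde\beta_1|u|+\widetilde\beta_2}$; the factor $(n/b^m)^\alpha\le b^\alpha$ lost by using $n$ rather than $b^m$ points is absorbed into $\widetilde c$. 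Using the continuous embedding $H(K_{\alpha,c,u})\hookrightarrow\mathcal{W}_{\alpha,u}$ of Corollary \ref{Tau_Konvergenz} (for $\alpha\ge2$; for $\alpha=1$ the same bound follows directly from the classical theory of $(t,d)$-sequences), whose constant $C_{4,\alpha,|u|}=C_{3,\alpha}^{|u|}$ grows only geometrically in $|u|$, gives
\[
e((Q_n)_u;H_u)\ \le\ \widetilde c\,\big(C_{3,\alpha}\widetilde C\big)^{|u|}\,(n+1)^{-\alpha}\,(1+\ln(n+1))^{\widetilde\beta_1|u|+\widetilde\beta_2},
\]
which is exactly a bound of the shape (\ref{assuconv}), with $c=\widetilde c$, $C=C_{3,\alpha}\widetilde C$, $\beta_1=\widetilde\beta_1$, $\beta_2=\widetilde\beta_2$; the case $u=\emptyset$ is trivial since $Q_n$ is exact on constants. (Because $d$ is fixed and finite, one could alternatively collapse all $|u|$-dependent quantities into constants depending only on $d,\alpha,b,t$, reducing (\ref{assuconv}) to a single-set estimate.)

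With (\ref{assuconv}) established for the whole family $(Q_n)_{n\in\N}$, and the $Q_{v_k}=Q^{\mathcal{W}}_{n,v_k}$ built as in (\ref{algoW}), all hypotheses of Theorem \ref{UppBouFAD} are met, and that theorem yields at once $p^{\nes}\le\max\{1/\alpha,\,2s/(\decay_{\bsgamma}-1)\}$ for the multilevel algorithms $Q^{\ML}_m$ of (\ref{multilevel-algo}).

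I expect the only real obstacle to be the bookkeeping in the second paragraph: confirming that the coordinate projection of a digital $(t,\alpha,d)$-sequence stays a digital sequence of the same smoothness order $\alpha$ with a controlled quality parameter — so that \cite[Theorem~5.4]{D08} can be applied one coordinate subset at a time — and that the constant it produces grows at most geometrically in $|u|$. That geometric growth, together with the geometric embedding constant $C_{3,\alpha}^{|u|}$ from Corollary \ref{Tau_Konvergenz}, is precisely what the form of (\ref{assuconv}) demands; once it is in place, the corollary is a one-line consequence of Theorem \ref{UppBouFAD}.
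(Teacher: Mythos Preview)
Your proposal is correct and follows essentially the same route as the paper: the paper establishes (\ref{assuconv}) for the higher-order QMC rules $Q_n$ by combining \cite[Theorem~5.4]{D08} with the embedding of Corollary~\ref{Tau_Konvergenz}, and then states the corollary as an immediate consequence of Theorem~\ref{UppBouFAD}. Your more detailed discussion of why the coordinate projections $(Q_n)_u$ inherit the required error bound (and your remark on the $\alpha=1$ case) is a welcome elaboration, but the overall strategy is identical.
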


The lower bound (\ref{neslowboufad}) on $p^{\nes}$ shows that the upper bound in
Corollary \ref{Vorletztes_Korollar} is sharp for finite-intersection weights.

\subsubsection{Unrestricted subspace sampling}

In the unrestricted subspace sampling setting we use for $\$(k)=O(k^s)$ and $s\le 1$
multilevel algorithms $Q^{\ML}_m$ as in Corollary \ref{Vorletztes_Korollar}, and for
$s>1$ changing dimension algorithms, see Section \ref{SUBSEC_4.4.2},
that rely on the higher order quasi-Monte Carlo rules $Q_n$ described above.
This results in the following corollary.

\begin{corollary}\label{Letztes_Korollar}
Let $\$(k) = O(k^s)$ for some $s\ge 0$. Let the weights $\bsgamma$ have finite
algorithmic dimension, and let $\decay_{\bsgamma} >1$.
Let $\alpha \ge 1$ be an integer. Then the exponent of strong tractability in the
unrestricted subspace sampling model satisfies
\begin{equation*}
p^{\unr} \le \max \left\{ \frac{1}{\alpha}, \frac{2\min\{1,s\}}{\decay_{\bsgamma} - 1} \right\}.
\end{equation*}
\end{corollary}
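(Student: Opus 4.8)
The plan is to read Corollary~\ref{Letztes_Korollar} as the concrete instantiation of Theorem~\ref{UppBouFADunr}. That theorem already asserts exactly the bound $p^{\unr}\le\max\{1/\alpha,\,2\min\{1,s\}/(\decay_{\bsgamma}-1)\}$ for an arbitrary family of weights of finite algorithmic dimension with $\decay_{\bsgamma}>1$ and a cost function with $\$(k)=O(k^s)$, \emph{provided} one can exhibit, for some $\alpha>0$ and every $n\in\N$, a quadrature $Q_n$ of the form (\ref{algobaustein-FAD}) whose projections satisfy the error estimate (\ref{assuconv}). Hence the only thing that really has to be done here is to check that the explicit higher order quasi-Monte Carlo rules $Q_n$ described immediately before the corollary do satisfy (\ref{assuconv}) for the given integer smoothness parameter $\alpha\ge1$, and then to quote Theorem~\ref{UppBouFADunr}.

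To verify (\ref{assuconv}) I would first fix $d=d^\ast(\bsgamma)\in\N$ and recall the setting $D=[0,1]$ with Lebesgue measure, $K=K_{\alpha,c}$. The node set of $Q_n$ consists of the first $n$ points of a $(t,\alpha,d)$-sequence over a prime base $b$ in the sense of \cite{D08}, with weights $a_i^{(n)}=b^{-m}$ for $1\le i\le b^m$ and $a_i^{(n)}=0$ otherwise, where $b^m\le n<b^{m+1}$; this equal-weighting on the largest $b$-power prefix is precisely what reduces general $n$ to the net cardinalities $b^m$. Next I would pass to the projections: for $u\subseteq[d]$ the operator $(Q_n)_u=Q_n\circ P_u$ is an equal-weight QMC rule on the $|u|$-dimensional point set obtained by projecting the sequence onto the coordinates in $u$, and such a projection is again a digital $(t',\alpha,|u|)$-sequence of the same order. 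Therefore \cite[Theorem~5.4]{D08} bounds its worst-case error over the Walsh space $\mathcal{W}_{\alpha,u}$ by a quantity of order $(n+1)^{-\alpha}$ up to logarithmic factors, with a constant growing at most geometrically in $|u|$; combining this with the continuous embedding $H(K_{\alpha,c,u})\hookrightarrow\mathcal{W}_{\alpha,u}$ from Corollary~\ref{Tau_Konvergenz} (whose embedding constant $C_{4,\alpha,|u|}=C_{3,\alpha}^{|u|}$ is again geometric in $|u|$) yields an estimate of precisely the shape required by (\ref{assuconv}), namely
\[
e((Q_n)_u;H(K_{\alpha,c,u}))\le c\,C^{|u|}\,(n+1)^{-\alpha}\,(1+\ln(n+1))^{\beta_1|u|+\beta_2}
\qquad\text{for all }u\subseteq[d],
\]
with non-negative constants $c,C,\beta_1,\beta_2$ independent of $n$. (For $\alpha\ge2$ this is the embedding argument just described; for $\alpha=1$ one uses a digitally shifted version of the same construction, exactly as in Section~\ref{NUMINT}.)

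With (\ref{assuconv}) established, I would simply invoke Theorem~\ref{UppBouFADunr}: all of its hypotheses are met, so its conclusion (\ref{uppboufadunr}) is the assertion of the corollary. For completeness I would note that this bound is realized by two different families of algorithms depending on the size of $s$: for $\$(k)=O(k^s)$ with $s\le1$ one uses the multilevel algorithms $Q^{\ML}_m$ of Section~\ref{UB_FAD} built from the lifted quadratures $Q^{\mathcal{W}}_{n,v_k}$ of (\ref{algoW}) together with a coloring $\phi$ of the associated graph of $\AC(\bsgamma)$ --- this produces the term $2s/(\decay_{\bsgamma}-1)=2\min\{1,s\}/(\decay_{\bsgamma}-1)$ via Theorem~\ref{UppBouFAD} --- whereas for $s>1$ one switches to the changing dimension algorithms of \cite[Sect.~4]{KSWW10}, using that weights of finite algorithmic dimension $d$ are finite-order weights of order $\omega\le d$ with $\decay_{\bsgamma,\omega}=\decay_{\bsgamma}>1$, which produces the term $2/(\decay_{\bsgamma}-1)=2\min\{1,s\}/(\decay_{\bsgamma}-1)$ via (\ref{ksww10}). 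The only genuine step is the verification of (\ref{assuconv}) in the second paragraph, and within it the delicate point is the constant bookkeeping: after combining the digital-net bound of \cite{D08} with the Walsh-space embedding of Corollary~\ref{Tau_Konvergenz}, one has to be sure that the dependence on $|u|$ is still of the admissible type $c\,C^{|u|}(1+\ln(n+1))^{\beta_1|u|+\beta_2}$ and the dependence on $n$ is $(n+1)^{-\alpha}$. Since $d$ is fixed and there are only finitely many $u\subseteq[d]$, this is in fact routine and was already carried out in the paragraph preceding the corollary; everything else reduces to a direct application of Theorem~\ref{UppBouFADunr}.
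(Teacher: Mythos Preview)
Your proposal is correct and follows essentially the same approach as the paper: verify that the higher-order QMC rules $Q_n$ built from a $(t,\alpha,d)$-sequence satisfy (\ref{assuconv}) via \cite[Theorem~5.4]{D08} together with the Walsh-space embedding of Corollary~\ref{Tau_Konvergenz}, and then invoke Theorem~\ref{UppBouFADunr} (which itself is obtained by combining the multilevel bound of Theorem~\ref{UppBouFAD} for $s\le1$ with the changing-dimension bound (\ref{ksww10}) for $s>1$). Your observation that the constant bookkeeping in (\ref{assuconv}) is routine because $d$ is fixed and there are only finitely many $u\subseteq[d]$ is a helpful clarification that the paper leaves implicit.
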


The lower bound (\ref{neslowboufad}) on $p^{\unr}$ shows that the upper bound in
Corollary \ref{Letztes_Korollar} is sharp for finite-intersection weights.

\section{Appendix}
\label{APPENDIX}

Here we provide a detailed proof of Lemma \ref{lem_pod_example}.

\begin{lemma}\label{lem_pod_gen}
Let $r > 1$ be a real number and define the POD weights $\gamma_u = \Gamma_{|u|} \prod_{j\in u} j^{-r}$ for $u \in \U$. Then there is a constant $c_r > 0$ such that
\begin{equation}
 \label{hurwitz1}
\sum_{u \in \U} \gamma_u \ge \Gamma_0 + c_r \sum_{k=1}^\infty \frac{\Gamma_k}{(k!)^{2 \lceil r/2 \rceil}} k^{-\lceil r/2 \rceil} \left(\frac{\pi}{2 \lceil r/2 \rceil \sin \pi / (2 \lceil r/2 \rceil) } \right)^{rk}.
\end{equation}
If $r \ge 2$, then there is a constant $C_r > 0$ such that
\begin{equation}
\label{hurwitz2}
\sum_{u \in \U} \gamma_u \le \Gamma_0 + C_r \sum_{k=1}^\infty \frac{\Gamma_k}{(k!)^r} k^{-r/2} \left(\frac{\pi}{2 \lfloor r/2 \rfloor \sin \pi / (2 \lfloor r/2 \rfloor) }\right)^{rk}.
\end{equation}
\end{lemma}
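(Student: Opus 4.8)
The plan is to reduce everything to two‑sided estimates of the elementary symmetric functions of the sequence $(j^{-r})_{j\ge 1}$. Since $\gamma_u=\Gamma_{|u|}\prod_{j\in u}j^{-r}$ depends on $u$ only through $|u|$ and the product, one has
\[
\sum_{u\in\U}\gamma_u=\sum_{k=0}^\infty \Gamma_k\,E_k,\qquad E_k:=\sum_{\substack{u\subseteq\N\\ |u|=k}}\prod_{j\in u}j^{-r}=e_k\big((j^{-r})_{j\ge 1}\big),
\]
where $e_k$ is the $k$‑th elementary symmetric function; the defining series converges because $r>1$. Hence \eqref{hurwitz1} and \eqref{hurwitz2} follow term by term once $E_k$ is bounded below by $c_r\,(k!)^{-2\lceil r/2\rceil}k^{-\lceil r/2\rceil}\big(\tfrac{\pi}{2\lceil r/2\rceil\sin(\pi/2\lceil r/2\rceil)}\big)^{rk}$ and (for $r\ge 2$) above by $C_r\,(k!)^{-r}k^{-r/2}\big(\tfrac{\pi}{2\lfloor r/2\rfloor\sin(\pi/2\lfloor r/2\rfloor)}\big)^{rk}$.

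For the lower bound I would use $j^{-r}\ge j^{-2\lceil r/2\rceil}$ for all $j\ge 1$ (as $2\lceil r/2\rceil\ge r$) together with the fact that $e_k$ is nondecreasing in each nonnegative argument, so $E_k\ge e_k\big((j^{-2m})_{j\ge 1}\big)$ with $m=\lceil r/2\rceil$; it then suffices to estimate the elementary symmetric functions of the reciprocal $2m$‑th powers from below. For the upper bound the analogous substitution $j^{-r}\le j^{-2\lfloor r/2\rfloor}$ is too lossy (it would replace $(k!)^{-r}$ by the larger $(k!)^{-2\lfloor r/2\rfloor}$), so there I would instead work with the generating function $F_r(t):=\prod_{j\ge 1}(1+t\,j^{-r})=\sum_{k\ge 0}E_k t^k$ for the \emph{actual} exponent $r$ and only at the end relax the rate constant $\tfrac{\pi}{r\sin(\pi/r)}$ upward to $\tfrac{\pi}{2\lfloor r/2\rfloor\sin(\pi/2\lfloor r/2\rfloor)}$. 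This final trade is legitimate because $x\mapsto x\sin(\pi/x)$ is nondecreasing on $[1,\infty)$ — its derivative equals $\sin u-u\cos u$ with $u=\pi/x$, and $\sin u-u\cos u$ vanishes at $u=0$ with derivative $u\sin u\ge 0$ — so that $2\lfloor r/2\rfloor\sin(\pi/2\lfloor r/2\rfloor)\le r\sin(\pi/r)\le 2\lceil r/2\rceil\sin(\pi/2\lceil r/2\rceil)$; combined with $\tfrac{\pi}{x\sin(\pi/x)}\ge 1$ this lets the exponential factors be traded in either direction, with the polynomial powers $k^{-\lceil r/2\rceil}$, $k^{-r/2}$ leaving ample room.

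The heart of the matter is the sharp asymptotics of $F_{2m}(t)=\prod_{j\ge 1}(1+t/j^{2m})$ (and of $F_r$). For the even exponent $2m$ I would use the closed form obtained by factoring $j^{2m}+t=\prod_{l=0}^{m-1}\big(j^2-t^{1/m}e^{i\pi(2l+1)/m}\big)$ and applying Euler's product $\prod_{j\ge 1}(1-z^2/j^2)=\sin(\pi z)/(\pi z)$, which yields
\[
\prod_{j\ge 1}\Big(1+\frac{t}{j^{2m}}\Big)=\prod_{l=0}^{m-1}\frac{\sin\!\big(\pi t^{1/2m}e^{i\pi(2l+1)/2m}\big)}{\pi t^{1/2m}e^{i\pi(2l+1)/2m}}.
\]
Letting $t\to\infty$ and using $\sum_{l=0}^{m-1}\sin\tfrac{(2l+1)\pi}{2m}=\tfrac{1}{\sin(\pi/2m)}$ gives $\log F_{2m}(t)=\tfrac{\pi}{\sin(\pi/2m)}\,t^{1/2m}-\tfrac12\log t+O(1)$; the same leading term (with the same $-\tfrac12\log t$) holds for general real $r>1$ by comparing $\sum_{j\ge1}\log(1+t/j^{r})$ with $\int_0^\infty\log(1+t/x^{r})\,dx=\tfrac{\pi}{\sin(\pi/r)}t^{1/r}$, the integral being evaluated from $\int_0^\infty w^{s-1}\log(1+w)\,dw=\tfrac{\pi}{s\sin(\pi s)}$ for $-1<s<0$. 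Feeding this into a saddle‑point (Hayman‑type) evaluation of $E_k=\tfrac{1}{2\pi i}\oint F_r(t)\,t^{-k-1}\,dt$, with saddle $\rho_k$ satisfying $\rho_k^{1/r}\sim rk\sin(\pi/r)/\pi$, produces $E_k=\Theta\!\big(k^{-1/2}(k!)^{-r}(\tfrac{\pi}{r\sin(\pi/r)})^{rk}\big)$, from which, via the trades above and term‑by‑term summation against $\Gamma_k$, both \eqref{hurwitz1} and \eqref{hurwitz2} follow.

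I expect the main obstacle to be making the saddle‑point evaluation of $E_k$ rigorous, namely bounding the contour integral away from the saddle: from $|1+\rho e^{i\theta}/j^{2m}|^2=(1+\rho/j^{2m})^2-2(\rho/j^{2m})(1-\cos\theta)$ one sees that $|F_r(\rho e^{i\theta})|$ is smaller than $F_r(\rho)$ by an exponentially large factor once $|\theta|$ is bounded below, and this needs to be quantified uniformly in $k$; alternatively one may invoke the Hayman admissibility of products $\prod_j(1+t/j^{r})$. The remaining ingredients — monotonicity of $e_k$, the two trigonometric sum/integral evaluations, and the monotonicity of $x\sin(\pi/x)$ — are routine.
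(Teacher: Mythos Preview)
Your overall strategy---reducing to term-by-term bounds on the elementary symmetric functions $E_k=e_k\big((j^{-r})_{j\ge1}\big)$---coincides with the paper's; the paper writes these same quantities as multiple zeta values $\zeta(r,\ldots,r)$. Your lower-bound reduction via the monotonicity $j^{-r}\ge j^{-2\lceil r/2\rceil}$ is also exactly what the paper does.

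The genuine difference is in how the even-integer case is handled and, above all, in the upper bound for general $r\ge 2$. For even $r=2m$ the paper simply quotes a closed-form evaluation of $\zeta(2m,\ldots,2m)$ from Borwein--Bradley--Broadhurst and then applies Stirling's formula, whereas you re-derive essentially the same asymptotics from scratch via the Euler product for $\sin(\pi z)/(\pi z)$. More importantly, for non-even $r$ the paper bypasses your entire saddle-point/Hayman program: it uses the one-line Jensen inequality
\[
\big[E_k(r)\big]^{\lambda}\;=\;\Big(\sum_{1\le j_1<\cdots<j_k}\prod_i j_i^{-r}\Big)^{\lambda}\;\le\;\sum_{1\le j_1<\cdots<j_k}\prod_i j_i^{-\lambda r}\;=\;E_k(\lambda r),\qquad 0<\lambda\le 1,
\]
chooses $\lambda$ so that $\lambda r=2\lfloor r/2\rfloor$ is the largest even integer $\le r$, and then raises the even-integer estimate to the power $1/\lambda$. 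This immediately produces the factorial $(k!)^{-r}$ and the base $\pi/\big(2\lfloor r/2\rfloor\sin(\pi/2\lfloor r/2\rfloor)\big)$, with no contour integration and no admissibility verification. Your route would work and is more self-contained (no appeal to the multiple-zeta literature), but the paper's is dramatically shorter: the ``main obstacle'' you flag---controlling $|F_r(\rho e^{i\theta})|$ uniformly away from the saddle---simply never arises.
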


Note that $\sin x < x$ for $x > 0$, thus $\sin \pi/r < \pi/r$, which implies
\begin{equation*}
1 < \frac{\pi}{r \sin \pi/r}.
\end{equation*}

\begin{proof}
We have
\begin{equation*}
\sum_{u \in \U} \gamma_u = \sum_{k=0}^\infty \Gamma_{k} \sum_{\satop{u \in \U}{|u| = k}} \prod_{j\in u} j^{-r} = \Gamma_0 + \sum_{k=1}^\infty \Gamma_{k} \sum_{1 \le j_1 < j_2 < \cdots < j_k} \prod_{i=1}^k  j_i^{-r} = \Gamma_0 + \sum_{k=1}^\infty \Gamma_k \zeta(\underbrace{r,\ldots, r}_{k \mbox{ times}}),
\end{equation*}
where $\zeta(\underbrace{r,\ldots, r}_{k \mbox{ times}})$ is the multiple Hurwitz zeta function.

The general behavior of the multiple Hurwitz zeta function is given in \cite[Eq. (48)]{BBB97}. From \cite[p. 8]{BBB97} it is known that if $r \ge 2$ is an even integer, then
\begin{equation*}
\zeta(\underbrace{r,\ldots, r}_{k \mbox{ times}}) =  \frac{r (2\pi)^{rk}}{(rk  + r/2)!} \left(\frac{1}{2 \sin \pi/r} \right)^{rk+r/2} \left(1 + \sum_{j=2}^{N_{r}} R_{r,j}^{rk+r/2} \right),
\end{equation*}
where $R_{r,j}$ are some numbers with $|R_{r,j}| < 1$ and $N_{r}$ is a positive integer satisfying $N_r < 2^{r/2}/r$. From Stirling's formula we obtain
\begin{equation*}
\frac{(k!)^r}{(rk)!} \asymp_k \frac{\sqrt{2\pi}}{\mathrm{e}} \frac{k^{kr} \mathrm{e}^{-rk}}{(rk)^{rk} \mathrm{e}^{-rk}} =  \frac{\sqrt{2\pi}}{\mathrm{e}} r^{-rk},
\end{equation*}
where $f(k) \asymp_k g(k)$ means that there are constants $C,c> 0$ independent of $k$ such that $c g(k) \le f(k) \le C g(k)$. Thus
\begin{align*}
(k!)^r \zeta(\underbrace{r,\ldots, r}_{k \mbox{ times}}) & =  \frac{(k!)^r r (2\pi)^{rk}}{(rk  + r/2)!} \left(\frac{1}{2 \sin \pi/r} \right)^{rk+r/2} \left(1 + \sum_{j=2}^{N_{r}} R_{r,j}^{rk+r/2} \right) \\ & \asymp_k \frac{\sqrt{2\pi} r}{\mathrm{e}} \left(\frac{1}{2\sin \pi/r} \right)^{r/2} \frac{1}{(rk+r/2)^{r/2}} \left(\frac{\pi}{r \sin \pi/r} \right)^{rk} \left(1 + \sum_{j=2}^{N_{r}} R_{r,j}^{rk+r/2} \right) \\ & \asymp_k \frac{1}{k^{r/2}} \left(\frac{\pi}{r \sin \pi/r} \right)^{rk}.
\end{align*}

Thus, for any fixed positive even integer $r$ we have
\begin{equation*}
\sum_{k=1}^\infty \Gamma_k \zeta(\underbrace{r,\ldots, r}_{k \mbox{ times}}) = \sum_{k=1}^\infty \frac{\Gamma_k}{(k!)^r} (k!)^r \zeta(\underbrace{r,\ldots, r}_{k \mbox{ times}}) \asymp \sum_{k=1}^\infty \frac{\Gamma_k}{(k!)^r} k^{-r/2} \left(\frac{\pi}{r \sin \pi/r}\right)^{rk}.
\end{equation*}
Therefore (\ref{hurwitz1}) follows since decreasing $r$ only increases the sum $\sum_{u \in \U} \gamma_u$ and the result holds for all even integers $r \ge 2$ as shown above.

Now assume that $r \ge 2$. For $1/r < \lambda \le 1$ we have by Jensen's inequality that
\begin{equation*}
[\zeta(r,\ldots, r)]^\lambda = \left[\sum_{1\le j_1 < \cdots < j_k} \prod_{i=1}^k j_i^{-r} \right]^\lambda \le \sum_{1 \le j_1 < \cdots < j_k} \prod_{i=1}^k j_i^{-r \lambda} = \zeta(r\lambda,\ldots, r\lambda).
\end{equation*}
Choose $1/r < \lambda \le 1$ such that $\lambda r$ is the largest even integer smaller or equal  than $r$. Then
\begin{equation*}
(k!)^r \zeta(r,\ldots, r) \le \left[(k!)^{\lambda r} \zeta(r\lambda,\ldots, r\lambda)\right]^{1/\lambda} \le C_r \frac{1}{k^{r/2}} \left(\frac{\pi}{\lambda r \sin \pi/ (\lambda r)} \right)^{rk},
\end{equation*}
for some constant $C_r > 0$. Thus
\begin{equation*}
\sum_{u \in \U} \gamma_u \le \Gamma_0 + C_r \sum_{k=1}^\infty \frac{\Gamma_k}{(k!)^r} k^{-r/2} \left(\frac{\pi}{\lambda r \sin \pi/ (\lambda r)} \right)^{rk},
\end{equation*}
from which (\ref{hurwitz2}) follows.
\end{proof}

\begin{corollary}\label{cor_pod_criteria}
Let $\bsgamma = (\gamma_u)_{u \in \U}$ be POD weights with $\gamma_u = \Gamma_{|u|} \prod_{j\in u} \gamma_j$. Let
$p^*:=\decay_{\bsgamma,1}
< \infty$.
Further let $c, c_0 > 0$ be constants such that
\begin{equation*}
c_0 j^{-p^\ast} \le \gamma_j \le c j^{-p^\ast} \quad \mbox{for all } j \ge 1.
\end{equation*}
If for some $q \le p^\ast/2$ we have
\begin{align}\label{eq_pod1}
\sum_{k=1}^\infty \frac{c^{k/q} \Gamma_k^{1/q}}{(k!)^{p^\ast /q}} k^{-p^\ast/ (2q)} \left(\frac{\pi}{2 \lfloor p^\ast/(2q) \rfloor \sin \pi/(2 \lfloor p^\ast / (2q) \rfloor} \right)^{k p^\ast/q} < \infty,
\end{align}
then $\mathrm{decay}_{\bsgamma,\infty} \ge q$.

On the other hand, if for $q < p^\ast$ we have
\begin{equation}\label{eq_pod2}
\sum_{k=1}^\infty \frac{c_0^{k/q} \Gamma_k^{1/q}}{(k!)^{2 \lceil p^\ast/(2q) \rceil}} k^{-\lceil p^\ast/(2q) \rceil} \left(\frac{\pi}{2 \lceil p^\ast/(2q) \rceil \sin \pi / (2 \lceil p^\ast/(2q) \rceil) } \right)^{k p^\ast /q} = \infty,
\end{equation}
then $\mathrm{decay}_{\bsgamma,\infty} \le q$.
\end{corollary}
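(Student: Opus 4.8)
The plan is to convert both statements into summability statements for $\sum_{u\in\U}\widehat\gamma_u^{1/q}$ and then read off the needed convergence/divergence from the two-sided Hurwitz-zeta estimate of Lemma \ref{lem_pod_gen}. First I would record the elementary bridge already used in the proof of Lemma \ref{Lemma3.8}: since $(\widehat\gamma_{u_j})_{j\in\N}$ is non-increasing, $\sum_{u\in\U}\widehat\gamma_u^{1/q}<\infty$ forces $j\,\widehat\gamma_{u_j}^{1/q}\to 0$, hence $\widehat\gamma_{u_j}=o(j^{-q})$ and therefore $\decay_{\bsgamma,\infty}\ge q$; conversely, if $\decay_{\bsgamma,\infty}>q$ then $\widehat\gamma_{u_j}=o(j^{-p})$ for some $p>q$, so $\widehat\gamma_{u_j}^{1/q}=o(j^{-p/q})$ with $p/q>1$, whence $\sum_{u\in\U}\widehat\gamma_u^{1/q}<\infty$. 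Thus it suffices to show that \Ref{eq_pod1} forces $\sum_{u\in\U}\widehat\gamma_u^{1/q}<\infty$ (first claim), and that \Ref{eq_pod2} forces $\sum_{u\in\U}\widehat\gamma_u^{1/q}=\infty$ (second claim, since $\decay_{\bsgamma,\infty}>q$ would then be impossible).

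For the first claim I would write $\widehat\gamma_u^{1/q}=\Gamma_{|u|}^{1/q}\prod_{j\in u}(C_0\gamma_j)^{1/q}$, use $\gamma_j\le c\,j^{-p^\ast}$ to get $\widehat\gamma_u^{1/q}\le\widetilde\Gamma_{|u|}\prod_{j\in u}j^{-r}$ with $r:=p^\ast/q$ and $\widetilde\Gamma_k:=(cC_0)^{k/q}\Gamma_k^{1/q}$, and then apply the upper bound \Ref{hurwitz2} of Lemma \ref{lem_pod_gen} to the POD weights $\widetilde\gamma_u=\widetilde\Gamma_{|u|}\prod_{j\in u}j^{-r}$. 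Here the hypothesis $q\le p^\ast/2$ is used exactly once, to guarantee $r\ge2$ so that \Ref{hurwitz2} is available; substituting $\lfloor r/2\rfloor=\lfloor p^\ast/(2q)\rfloor$, $k^{-r/2}=k^{-p^\ast/(2q)}$, $rk=kp^\ast/q$ turns the right-hand side of \Ref{hurwitz2} into (a constant multiple of, plus $\widetilde\Gamma_0$) the series in \Ref{eq_pod1}, the fixed factor $C_0$ being absorbed into $c$. Finiteness of that series gives $\sum_{u\in\U}\widehat\gamma_u^{1/q}<\infty$, hence $\decay_{\bsgamma,\infty}\ge q$.

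For the second claim I would argue symmetrically but from below: $\gamma_j\ge c_0\,j^{-p^\ast}$ gives $\widehat\gamma_u^{1/q}\ge\widetilde\Gamma_{|u|}\prod_{j\in u}j^{-r}$ with the same $r=p^\ast/q$ and $\widetilde\Gamma_k=(c_0C_0)^{k/q}\Gamma_k^{1/q}$, and now only $q<p^\ast$, i.e. $r>1$, is needed, so the lower bound \Ref{hurwitz1} of Lemma \ref{lem_pod_gen} applies and, after the same substitutions with $\lceil r/2\rceil=\lceil p^\ast/(2q)\rceil$, reproduces the series in \Ref{eq_pod2}. Divergence of that series forces $\sum_{u\in\U}\widehat\gamma_u^{1/q}=\infty$, which by the bridge above excludes $\decay_{\bsgamma,\infty}>q$, i.e. $\decay_{\bsgamma,\infty}\le q$.

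Given Lemma \ref{lem_pod_gen}, there is no deep obstacle here: all the analytic content (the asymptotics of $\zeta(r,\ldots,r)$ via \cite{BBB97}) is already packaged there. The only step requiring care is purely clerical — verifying that the floor/ceiling brackets and the three exponents in \Ref{hurwitz1}/\Ref{hurwitz2} line up \emph{verbatim} with those in \Ref{eq_pod1}/\Ref{eq_pod2} under the identification $r=p^\ast/q$, and keeping the constants $c$, $c_0$, $C_0$ tracked consistently (the two claims push the constant in opposite directions, so one must use the upper bound with $c$ for $\decay\ge q$ and the lower bound with $c_0$ for $\decay\le q$). This bookkeeping is exactly why the first claim carries the stronger hypothesis $q\le p^\ast/2$ while the second needs only $q<p^\ast$.
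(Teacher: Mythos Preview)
Your approach is essentially the same as the paper's: reduce to summability of $\sum_{u\in\U}\gamma_u^{1/q}$ and read off convergence/divergence from Lemma~\ref{lem_pod_gen} with $r=p^\ast/q$. The only difference is that the paper states the bridge in one line as the identity $\decay_{\bsgamma,\infty}=\sup\{q:\sum_{u\in\U}\gamma_u^{1/q}<\infty\}$ and works with $\gamma_u$ rather than $\widehat\gamma_u$, which sidesteps the $C_0$ bookkeeping you flag; otherwise the argument and the use of the hypotheses $q\le p^\ast/2$ versus $q<p^\ast$ match exactly.
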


\begin{proof}
We have
\begin{equation*}
\mathrm{decay}_{\bsgamma,\infty} = \sup\left\{q \in \mathbb{R}: \sum_{u \in \U} \gamma_u^{1/q} < \infty \right\}.
\end{equation*}
Thus we have for some $q \le p^\ast/2$
\begin{align*}
\sum_{u \in \U} \gamma_u^{1/q} \le \Gamma_0^{1/q} + C_{p^*/q} \sum_{k=1}^\infty  \frac{c^{k/q} \Gamma_k^{1/q}}{(k!)^{p^\ast /q}} k^{-p^\ast/ (2q)} \left(\frac{\pi}{2 \lfloor p^\ast/(2q) \rfloor \sin \pi/(2 \lfloor p^\ast / (2q) \rfloor} \right)^{k p^\ast/q}
\end{align*}
that the right hand side is finite, then $\mathrm{decay}_{\bsgamma,\infty} \ge q$.

On the other hand, for $q < p^\ast$ we have
\begin{equation*}
\sum_{u \in \U} \gamma_u^{1/q} \ge \Gamma_0^{1/q} + c_{p^*/q} \sum_{k=1}^\infty \frac{c_0^{k/q} \Gamma_k^{1/q}}{(k!)^{2 \lceil p^\ast/(2q) \rceil}} k^{-\lceil p^\ast/(2q) \rceil} \left(\frac{\pi}{2 \lceil p^\ast/(2q) \rceil \sin \pi / (2 \lceil p^\ast/(2q) \rceil) } \right)^{p^\ast k/q}.
\end{equation*}
If the right hand side is infinite for some $q < p^\ast$, then $\mathrm{decay}_{\bsgamma,\infty} \le q$.
\end{proof}

We suspect that the condition $q \le p^\ast/2$ in the above corollary can be replaced by $q \le p^\ast$.

The corollary above allows us to construct an example of POD weights where
\begin{equation*}
1 \le \mathrm{decay}_{\bsgamma,\infty} < \mathrm{decay}_{\bsgamma,1}.
\end{equation*}
For instance, let $\gamma_j = j^{-p^\ast}$. Thus $\mathrm{decay}_{\bsgamma,1} = p^\ast$ and $c_0 = c = 1$ in the above corollary.
Let $q^\ast$ be such that $p^\ast/(2q^\ast) \in \mathbb{N}$.
For $k \in \mathbb{N}_0$ let
\begin{equation*}
\Gamma_k = (k!)^{p^\ast} k^{p^\ast/2-q^\ast} \left(\frac{(p^\ast/q^\ast) \sin (q^*\pi/p^\ast)}{\pi} \right)^{k p^\ast}.
\end{equation*}
Then we have for $q=q^*$ that \eqref{eq_pod2} is of the same form as \eqref{eq_pod1}, which is
\begin{equation}\label{eq_pod_example}
\sum_{k=1}^\infty \frac{\Gamma_k^{1/q}}{(k!)^{p^\ast/q}} k^{-p^\ast/(2q)} \left(\frac{\pi}{2 \lfloor p^\ast/(2q) \rfloor  \sin \pi / (2 \lfloor p^\ast/(2q) \rfloor) } \right)^{k p^\ast/q} = \sum^\infty_{k=1} k^{-1}= \infty.
\end{equation}
Due to (\ref{eq_pod2}) we have $\decay_{\bsgamma,\infty} \le q^*$.

Let now $q<q^*$ such that $\lfloor p^*/2q \rfloor = p^*/2q^*$.
For this $q$ the left hand side of (\ref{eq_pod1}) is
\begin{equation*}
 \sum^\infty_{k=1} \frac{\Gamma_k^{1/q}}{(k!)^{p^*/q}}
k^{-p^*/2q} \left( \frac{\pi}{(p^*/q^*) \sin(q^*\pi/p^*)} \right)^{kp^*/q}
= \sum^\infty_{k=1} k^{-q^*/q} <\infty.
\end{equation*}
Thus (\ref{eq_pod1}) gives us $\decay_{\bsgamma,\infty} \ge q$.

Together with Lemma \ref{Lemma3.8} this establishes Lemma \ref{lem_pod_example}.

\subsection*{Acknowledgment}
Both authors want to thank Michael Griebel for suggesting them
to study algorithms for infinite-dimensional integration of
higher order convergence. We are grateful for the opportunity to work at the Hausdorff Institute in Bonn where the work on this paper was initiated.

Josef Dick is supported by an ARC Queen Elizabeth II Fellowship.

Michael Gnewuch was supported by the German Science Foundation DFG under grant GN 91/3-1 and by the Australian Research Council ARC.

\end{document}